\newtheorem{thm}{Theorem}[section]
\newtheorem{lem}{Lemma}[section]
\newtheorem{Prop}{Proposition}
\newtheorem*{St*}{Statement}
\newtheorem*{theorem*}{Theorem}
\theoremstyle{remark}
\theoremstyle{definition}
\theoremstyle{remark}
\newtheorem{oss}{Remark}[section]
\newcommand{\be}{\begin{equation}}
\newcommand{\ee}{\end{equation}}
\newcommand{\R}{\mathbb{R}}
\newcommand{\N}{\mathbb{N}}
\newcommand{\C}{\mathbb{C}}
\newcommand{\CP}{\mathbb{C}\mathbb{P}}
\newcommand{\Z}{\mathbb{Z}}
\newcommand\res{\mathop{\hbox{\vrule height 7pt width .5pt depth 0pt
\vrule height .5pt width 6pt depth 0pt}}\nolimits}
\def\eps{\mathop{\varepsilon}}
\def\Mc{\mathop{\mathcal{M}_{}}}
\def\Uc{\mathop{\mathcal{U}_{}}}
\def\Nc{\mathop{\mathcal{N}_{}}}
\def\Hc{\mathop{\mathcal{H}}}
\def\de{\delta}
\def\Om{\Omega}
\def\om{\omega}
\def\p{\partial}
\def\atanl{\left(\alpha_{\ell}^{2m-1}\right)_{\text{tan}}}
\def\atan[#1]{\left(\alpha_{#1}^{2m-1}\right)_{\text{tan}}}
\def\j[#1]{\mathscr{j_{#1}}}
\def\eps{\mathop{\varepsilon}}
\def\Om{\Omega}
\def\om{\omega}
\def\p{\partial}
\DeclareMathAlphabet{\mathscr}{OT1}{pzc}{m}{it} 
\begin{document} 

\title{\textbf{Compactness results for triholomorphic maps}}
\author{Costante Bellettini \\ University of Cambridge \and Gang Tian \\ Beijing University and Princeton University}
\date{}
\maketitle
 \begin{abstract}
  
 We consider triholomorphic maps from an almost hyper-Hermitian manifold $\Mc^{4m}$ into a (simply connected) hyperK\"ahler manifold $\Nc^{4n}$. This notion entails that the map $u \in W^{1,2}$ satisfies a quaternionic del-bar equation. We work under the assumption that $u$ is locally strongly approximable in $W^{1,2}$ by smooth maps: then such maps are almost stationary harmonic, in a suitable sense (in the important special case that $\Mc$ is hyperK\"ahler as well, then they are stationary harmonic). We show, by means of the $bmo-\mathscr{h}^1$-duality, that in this more general situation the classical $\eps$-regularity result still holds and we establish the validity, for triholomorphic maps, of the $W^{2,1}$-conjecture (i.e. an a priori $W^{2,1}$-estimate in terms of the energy). We then address compactness issues for a weakly converging sequence $u_\ell \rightharpoonup u_\infty$ of strongly approximable triholomorphic maps $u_\ell:\Mc \to \Nc$ with uniformly bounded Dirichlet energies. The blow up analysis leads, as in the usual stationary setting, to the existence of a rectifiable blow-up set $\Sigma$ of codimension $2$, away from which the sequence converges strongly. The defect measure $\Theta(x) {\Hc}^{4m-2}\res \Sigma$ encodes the loss of energy in the limit and we prove that for a.e. point on $\Sigma$ the value of $\Theta$ is given by the sum of the energies of a (finite) number of smooth non-constant holomorphic bubbles (here the holomorphicity is to be understood with respect to a complex structure on $\Nc$ that depends on the chosen point on $\Sigma$). In the case that $\Mc$ is hyperK\"ahler this quantization result was established by C. Y. Wang \cite{Wang} with a different proof; our arguments rely on Lorentz spaces estimates. By means of a calibration argument and a homological argument we further prove that whenever the restriction of $\Sigma \cap (\Mc \setminus \text{Sing}_{u_\infty})$ to an open set is covered by a Lipschitz connected graph, then actually this portion of $\Sigma$ is a smooth submanifold without boundary and it is pseudo-holomorphic for a (unique) almost complex structure on $\Mc$ (with $\Theta$ constant on this portion); moreover the bubbles originating at points of such a smooth piece are all holomorphic for a common complex structure on $\Nc$. 
 \end{abstract}
 
\section{Introduction}

Triholomorphic maps have appeared in high-dimensional gauge theory and string theory \cite{FKS} and play an important role for the geometric understanding of hyperK\"ahler manifolds \cite{Tian2}. From a more analytic perspective, they constitute an important class of harmonic maps and their more peculiar structure may lead to deeper results than those coming from the general high-dimensional harmonic map theory.

Consider two compact oriented Riemannian manifolds $(\Mc, g)$ and $(\Nc, h)$ respectively of dimensions $4m$ and $4n$ and assume that the manifold $\Mc$ [respectively $\Nc$] carries three smooth almost complex structures $i$, $j$, $k$ compatible with the metric $g$ [respectively $I$,$J$,$K$ compatible with $h$] satisfying the quaternionic relation $ijk =-Id$ [respectively $IJK =-Id$]. To each of these almost complex structures, take for example $i$, we can associate uniquely a non-degenerate two-form $\om_i$ on $\Mc$ defined by $\om_i(\cdot, \cdot)=g(\cdot, i \cdot)$. In the same way we have $\om_j$ and $\om_k$ on $\Mc$ and (using the metric $h$ instead) $\Om_I$, $\Om_J$, $\Om_K$ on $\Nc$. In the case when $\Mc$ and $\Nc$ are hyperK\"ahler manifolds all the almost complex structures are actually parallel complex structures and all of the two-forms defined above are closed. We will require, for the purposes of this work, that the forms $\Om_I$, $\Om_J$, $\Om_K$ on $\Nc$ are closed, whilst we will not need the closedness for $\om_i$, $\om_j$ and $\om_k$ on $\Mc$. A manifold $\Mc$ of the type just described is usually called \textit{almost hyper-Hermitian}. By a result of N. Hitchin \cite[Lemma 6.8]{Hitchin} the closedness assumption on the forms on $\Nc$ amounts to the apparently stronger condition that $\Nc$ is a hyperK\"ahler manifold. We will always assume that $\Nc$ is simply connected\footnote{The assumption that the hyperK\"ahler manifold $\Nc$ is simply connected is not very restrictive: indeed, when it is not so, the manifold admits a finite cover that is the product of several simply connected hyperK\"ahler manifolds with a flat torus $\mathbb{T}^{4n}$. The flat torus does not admit any harmonic $S^2$'s so for the purposes of the present work, where we will deal with compactness issues, it represents a very easy case as it does not allow bubbles to form.}.

By the Nash embedding theorem we can isometrically embed $\Nc$ in $\R^Q$ for some large enough $Q$, and from now on we will assume to have done this. We say that $u:\Mc \to \Nc$ is $W^{1,2}(\Mc, \Nc)$ if $u$ is of class $W^{1,2}$ as a map from $\Mc$ into $\R^Q$ and moreover $u(x) \in \Nc$ for $\mathcal{H}^{4m}$-a.e. $x \in \Mc$. We consider $u$ (and later on a sequence $\{u_\ell\}$) of class $W^{1,2}(\Mc, \Nc)$ satisfying the triholomorphic\footnote{With our convention for the PDE (\ref{eq:triholo}) it is clear that $(i,-I)$-pseudoholomorphic maps are special cases of triholomorphic maps. The same goes more generally for $((ai+bj+ck), -(aI+bJ+cK))$-pseudo holomorphic maps, with $a^2+b^2+c^2=1$. However, as noted with explicit examples in \cite{ChenLi1}, the triholomorphic notion is stricly more general than the pseudoholomorphic one.} map equation

\begin{equation}
 \label{eq:triholo}
 du = I du \, i + J du \, j + K du \, k,
\end{equation}
i.e. for a.e. $x\in \Mc$ and any tangent vector $X$ at $x$ it holds

$$ du (X) = I du (i X) + J du (j X) + K du (k X),$$
where $I,J,K$ act on $T_{u(x)}\Nc$. 
The situation that we will address is the following. We assume that the map $u$ satisfies the following\footnote{Actually we will need  (\ref{eq:strongapproximability}) only for $\alpha = \Om_I, \Om_J, \Om_K$, and these three forms do not generate the whole of $H^2(\Nc,\Z)$, however we are not sure whether this actually yields a weaker assumption or whether the fulfilment of (\ref{eq:strongapproximability}) by a triholomorphic map is implied by its validity on these three forms only.} condition

\begin{equation}
 \label{eq:strongapproximability}
d(u^*\alpha)=0 \text{ for every closed $2$-form } \alpha  \text{ on }\Nc.
\end{equation}
Condition (\ref{eq:strongapproximability}) is rather easily seen to be true in the important case that $u$ is \textit{locally strongly approximable} in $W^{1,2}$ by smooth maps. In view of \cite{BCDH} we have that (\ref{eq:strongapproximability}) is actually equivalent to the local strong approximability when $H_2(\Nc,\Z)$ is torsion-free. This is satisfied for example when $n=1$, i.e. $\Nc$ is a K3 surface. We are not aware of any example with non-trivial torsion in $H_2(\Nc, \Z)$.

\medskip

A map $u$ as above turns out to be stationary harmonic (i.e. a critical point of the Dirichlet energy for local perturbations both in the target and in the domain, this notion was introduced by R. Schoen \cite{Schoen}) when the forms $\om_i$, $\om_j$, $\om_k$ are closed, i.e. when $\Mc$ as well is hyperK\"ahler. The assumption (\ref{eq:strongapproximability}) is crucial here\footnote{This was overlooked in \cite{ChenLi1}.} as we will see. In the more general case that we address (i.e. $\om_i$, $\om_j$, $\om_k$ are not necessarily closed) then we find a suitable notion of ``almost stationarity'': for any vector field $X \in C^\infty_c(\Mc)$ and denoting with $\Psi_t=Id + t X$ the local variation induced by $X$ then we have
$$\left.\frac{d}{dt}\right|_{t=0}\int\limits_{\Mc}|\nabla (u \circ \Psi_t)|^2 \geq -C \int\limits_{\Mc} |X| |\nabla u|^2,$$
where $C=\|d\om_i\|_\infty+\|d\om_j\|_\infty+\|d\om_k\|_\infty$. A similar statement holds for local perturbations in the target. We observe further that $u$ satisfies a \textit{perturbation of the usual harmonic map PDE}, namely (denoting by $A$ the second fundamental form of $\Nc$)
\begin{equation}
\label{eq:2ndorderPDE}
\Delta_g u + A(\nabla u , \nabla u) = f(x,u,\nabla u)    \, ,                                                                                                       \end{equation}
where $f$ is smooth and has linear dependence on the first derivatives of $u$ (observe that this type of perturbation is \textit{sub-critical}). Moreover we prove in Proposition \ref{Prop:monotonicity} an \textit{almost monotonicity formula} for the energy ratio $\frac{1}{r^{4m-2}}\int_{B_r(x)}|\nabla u|^2$ at all $x\in\Mc$.

\medskip

Inspired by the similarities with stationary harmonic maps, we show that the basic results that are known in that classic setting are still valid in our situation, namely we prove an \textit{$\eps$-regularity result} (see Propositions \ref{Prop:epsregularity} and \ref{Prop:singset}), i.e. there exists a threshold $\eps_0>0$ depending only on the geometric data (and not on the particular $u$ satisfying (\ref{eq:triholo}) and (\ref{eq:strongapproximability})) such that whenever the energy ratio of $u$ is below $\eps_0$ on a ball $B_R(x)$ then $u$ is smooth on $B_{R/2}(x)$ with a $\sup$-bound on the modulus of its gradient. Moreover the singular set of $u$, i.e. the closed set where $u$ fails to be smooth, is a ${\Hc}^{4m-2}$-negligeable set. These results are based on the PDE analysis carried out in Sections \ref{jacobianstructure} and \ref{LaplaceBeltrami}, where we find that the PDE (\ref{eq:2ndorderPDE}) is of the form $\Delta_g u \in \mathscr{h}^1$ (where $\mathscr{h}^1$ denotes the inhomogeneous Hardy space) with Hardy norm controlled by the Dirichlet energy of $u$. Then we use the $bmo-\mathscr{h}^1$ duality, similarly to the case treated by L. C. Evans \cite{Evans}, to infer the $\eps$-regularity\footnote{Evans proved the result for stationary harmonic maps when the target is a sphere. For stationary harmonic maps with arbitrary target the $\eps$-regularity result was established by F. Bethuel \cite{Beth}. For minimizing harmonic maps it had been earlier proved by R. Schoen and K. Uhlenbeck \cite{SchUhl}.}. We also establish the \textit{$W^{2,1}$-conjecture for triholomorphic maps}, i.e. the analogue of the a priori $W^{2,1}$-estimate conjectured to hold by T. Rivi\`{e}re \cite[page 5]{Riv} in the case of stationary harmonic maps:

\begin{thm}
 \label{thm:W21estimate}
Let $u: \Mc \rightarrow \Nc \subset \R^Q$ be a triholomorphic map, i.e. (\ref{eq:triholo}) holds. Then
$$u \in W^{2,1}(\Mc) \text{ and } \|u\|_{W^{2,1}(\Mc)} \leq Const_{(\Mc, g)} \left(Const_{(\Nc \hookrightarrow \R^Q)} + E(u)\right),$$
where $E(u)$ denotes the Dirichlet energy of $u$ and $Const_{(\Nc \hookrightarrow \R^Q)}$ is a constant depending on the embedding of $\Nc$ and $Const_{(\Mc, g)}$ depends only on the geometry of $\Mc$.
\end{thm}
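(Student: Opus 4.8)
The plan is to derive the estimate from the structural fact, established in Sections \ref{jacobianstructure} and \ref{LaplaceBeltrami}, that the right-hand side of the second-order PDE for $u$ lies in the local Hardy space $\mathscr{h}^1(\Mc)$ with norm controlled by the Dirichlet energy, and then to invoke the continuity of the Calder\'on--Zygmund operator $\Delta_g^{-1}\nabla^2$ from $\mathscr{h}^1$ to $L^1$. Concretely, I would first recall that $u$ satisfies (\ref{eq:2ndorderPDE}), so
\[
\Delta_g u = -A(\nabla u,\nabla u) + f(x,u,\nabla u).
\]
The term $f$ is smooth with at most linear growth in $\nabla u$, hence it is bounded in $L^2(\Mc) \subset L^1(\Mc) \subset \mathscr{h}^1(\Mc)$ by $Const_{(\Nc\hookrightarrow\R^Q)}(1 + E(u))$ after a Cauchy--Schwarz step. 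The genuinely nontrivial term is $A(\nabla u,\nabla u)$: a priori this is only in $L^1$, which is \emph{not} enough for elliptic $W^{2,1}$ regularity, and the whole point is that the triholomorphic/approximability hypotheses force this quadratic expression to have a compensated (div-curl / Jacobian) structure placing it in $\mathscr{h}^1$. I would cite the relevant Proposition(s) from Sections \ref{jacobianstructure}--\ref{LaplaceBeltrami} to write $\|\Delta_g u\|_{\mathscr{h}^1(\Mc)} \le Const_{(\Mc,g)}\big(Const_{(\Nc\hookrightarrow\R^Q)} + E(u)\big)$.

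The second step is the elliptic estimate. On the closed manifold $\Mc$, decompose $u = u - \fint_\Mc u$ and write $u - \fint_\Mc u = G \ast_g \Delta_g u$ via the Green operator; then $\nabla^2 u = (\nabla^2 G)\ast_g \Delta_g u$ plus lower-order curvature terms coming from commuting covariant derivatives with $\Delta_g$. The operator $T = \nabla^2 \Delta_g^{-1}$ (acting on mean-zero data, with the finite-dimensional kernel projected off) is a classical pseudodifferential operator of order $0$, hence it is bounded $\mathscr{h}^1(\Mc)\to L^1(\Mc)$; this is exactly the local-Hardy-space endpoint mapping property (Fefferman--Stein theory, in the form valid on compact manifolds). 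The lower-order terms are handled by the $L^2 \subset L^1$ bound on $u$ and $\nabla u$, i.e. by the energy again plus a Poincar\'e inequality. Combining, $\|\nabla^2 u\|_{L^1(\Mc)} \le Const_{(\Mc,g)}\big(\|\Delta_g u\|_{\mathscr{h}^1} + \|u\|_{W^{1,2}}\big)$, and $\|u\|_{W^{1,2}} \le Const_{(\Nc\hookrightarrow\R^Q)} + C E(u)$ since $u$ takes values in the compact manifold $\Nc$. Adding the zeroth- and first-order pieces yields the full $W^{2,1}(\Mc)$ bound in the claimed form.

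The main obstacle is the first step, i.e. genuinely exhibiting $A(\nabla u,\nabla u)$ (together with the extra curvature terms on $\Mc$ arising because the $\om_i,\om_j,\om_k$ need not be closed) as an element of $\mathscr{h}^1$ rather than merely $L^1$. This is where the hypothesis (\ref{eq:strongapproximability}) is indispensable: it is what allows one to rewrite the quadratic terms using the closedness of $u^*\Om_I,u^*\Om_J,u^*\Om_K$ as sums of Jacobians of $W^{1,2}$ functions, to which the div-curl lemma of Coifman--Lions--Meyer--Semmes (in its $\mathscr{h}^1$-valued local form) applies. Once that compensated structure is in hand — which I am assuming from the cited sections — the remainder of the argument is the standard duality/Calder\'on--Zygmund machinery and careful bookkeeping of the constants' dependence on $(\Mc,g)$ and on the embedding $\Nc\hookrightarrow\R^Q$; a secondary technical point is to make sure the local Hardy space $\mathscr{h}^1$ (rather than the homogeneous $\mathscr{H}^1$) is used throughout, since $\Mc$ is compact and constants must be allowed, which is precisely why the inhomogeneous space appears in the statement $\Delta_g u \in \mathscr{h}^1$.
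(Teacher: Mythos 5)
Your second step (the elliptic $\mathscr{h}^1\to W^{2,1}$ machinery) is in the right spirit: the paper also relies on the boundedness of Calder\'on--Zygmund operators on $\mathscr{h}^1$, though it avoids the pseudodifferential formalism by working concretely in normal coordinates on small geodesic balls, dilating so that $\sqrt{|g|}g^{ij}-\delta^{ij}$ and its first derivatives are tiny, and then running a contraction/fixed-point argument in the space $F=\{w\in W^{2,1}:\nabla^2 w\in\mathscr{h}^1\}$ to compare $\Delta_g$ with the flat Laplacian, finally patching the fixed point to $u$ by divergence-form $L^p$-theory. Either implementation buys the same conclusion.

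The first step, however, contains a genuine misconception. Theorem~\ref{thm:W21estimate}, as stated, assumes \emph{only} the first-order equation (\ref{eq:triholo}); the approximability hypothesis (\ref{eq:strongapproximability}) is not a hypothesis of this theorem and is in fact not used anywhere in its proof. You propose to start from the second-order PDE (\ref{eq:2ndorderPDE}) and to "rewrite $A(\nabla u,\nabla u)$ using the closedness of $u^*\Om_I,u^*\Om_J,u^*\Om_K$," calling (\ref{eq:strongapproximability}) "indispensable." That is not how the Hardy structure is actually obtained, and it is unclear how one would extract a compensated structure from $A(\nabla u,\nabla u)$ along these lines. What Proposition~\ref{Prop:JacobianStructure} does instead is never pass to the second-order equation at all: it plugs the first-order triholomorphic equation (\ref{eq:triholoincoords}), written in local coordinates, directly into the coordinate expression of $\Delta_g u^\beta$. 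The crucial algebraic input is not closedness of any pulled-back form but the skew-symmetry in $(a,s)$ of the coefficients $g^{a\ell}i_\ell^s$, $g^{a\ell}j_\ell^s$, $g^{a\ell}k_\ell^s$ (i.e.\ that $\om_i,\om_j,\om_k$ are two-forms, not that they or $u^*\Om_I$ are closed). This skew-symmetry kills the second-derivative terms and forces the remaining expression to be a finite sum of $2\times 2$ Jacobians of pairs of $W^{1,2}$ functions, to which CLMS applies directly. So the compensated structure is purely a consequence of (\ref{eq:triholo}) plus the almost-Hermitian structure on $\Mc$; wiring in (\ref{eq:strongapproximability}) would both over-assume and still leave you without the actual mechanism. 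You should replace your step one by a direct computation of $\Delta_g u^\beta$ from (\ref{eq:triholoincoords}), not by a decomposition into $-A(\nabla u,\nabla u)+f$.
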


This will be a fundamental ingredient in the proof of our quantization result Theorem \ref{thm:quantization}. In the case of stationary harmonic maps from a domain in $\R^n$ into a spherical target the $W^{2,1}$-estimate is known to hold thanks to the observation by F. H\'{e}lein that $\Delta u \in \mathscr{H}^1$, the homogeneous Hardy space (see \cite{CLMS}, \cite{Helein}). This a priori estimate is the starting point of the proof in \cite{Evans}, where the $BMO-\mathscr{H}^1$-duality is exploited.

\medskip

After these preliminary results we proceed further with compactness questions for a sequence $\{u_\ell\}$ of triholomorphic maps (with assumptions (\ref{eq:triholo}) and (\ref{eq:strongapproximability}) as above for each $u_\ell$) with uniformly bounded Dirichlet energies. Without loss of generality we may assume $u_\ell \rightharpoonup u$ weakly in $W^{1,2}$. It is natural to ask what knowledge we have of the limit map $u$ and of its smoothness properties and if (and on which set) we may expect a stronger form of convergence. The question is wide and open to a large extent (even in the case where both $\Mc$ and $\Nc$ are hyperK\"ahler) and its solution would immediately impact the \textit{compactification of the moduli space of triholomorphic maps}, leading further to the definition of geometric invariants of hyperK\"ahler manifolds by means of a suitable counting of triholomorphic maps, as outlined by the second author in \cite{Tian2}. The identification of a suitable class of triholomorphic maps is one of the tasks here, and it is also in this light that condition (\ref{eq:strongapproximability}) should be viewed. In Section \ref{blowupanalysis} we show that the fundamental results of F.-H. Lin \cite{Lin} for stationary harmonic maps hold with corresponding statements in our case, namely: we can identify a closed set $\Sigma$ of locally finite ${\Hc}^{4m-2}$-measure away from which the convergence $\{u_\ell\} \to u$ is strong (even $C^k$), the set $\Sigma$ is $(4m-2)$-rectifiable, the Radon measures $|\nabla u_\ell|^2 d{\Hc}^{4m}$ weakly-* converge to the measure $|\nabla u|^2 d{\Hc}^{4m} + \Theta(x) d{\Hc}^{4m-2} \res \Sigma$ with density $\Theta \geq \eps_0$ a.e.\footnote{The measure $\Theta(x) d{\Hc}^{4m-2} \res \Sigma$ is usually called \textit{defect measure} and encodes the possible loss of energy in the limit and lack of strong convergence. The set $\Sigma$ is usually called \textit{blow-up set}, \textit{bubbling set} or \textit{concentration set}. It is the set where the energy concentrates, leading to lack of strong convergence and formation of bubbles.} Moreover $u$ is smooth away from a closed set $\text{Sing}_u \subset \Sigma$ (on which it is discontinuous) and satisfies globally the PDE $\Delta_g u + A(\nabla u , \nabla u) = f(x,u,\nabla u)$. However, whilst $u$ is smooth away from $\text{Sing}_u$ (and thus $u$ is almost stationary on $\Mc \setminus \text{Sing}_u$) the same might a priori not hold on the whole of $\Mc$. A priori we could have that (for some closed $2$-form $\alpha$) the form $d(u^* \alpha)$ is a non-zero distribution with support on the set $\text{Sing}_u$, since the weak $W^{1,2}$-convergence does not allow to pass condition (\ref{eq:strongapproximability}) to the limit. The understanding of whether the local strong approximability holds across $\text{Sing}_u$ is of central importance\footnote{Unfortunately \cite{ChenLi1} does not notice that it plays a role in the analysis of the regularity of $\Sigma$.}. It is very much related to the understanding of the structure of $\Sigma$ as well. To see this, let us focus on the important case where $\Mc$ and $\Nc$ are hyperK\"ahler: the lack of the strong approximability condition would imply a lack of stationarity of $u$ and (by the blow up formula in \cite{LiTian}) a lack of stationarity for the varifold $\Sigma$, localized on $\text{Sing}_u$, with obvious implications on the regularity. Remark that a priori $\Sigma$ might as well have boundary contained in $\text{Sing}_u$. 

\medskip

The main compactness results we provide are the following two theorems. The first regards the \textit{quantization of energy} for the defect measure:

\begin{thm}
\label{thm:quantization}
Consider a sequence of maps $u_\ell :\Mc \to \Nc$ (with $\Mc$ almost hyper-Hermitian and $\Nc$ hyperK\"ahler of arbitrary dimensions respectively $4m$ and $4n$) satisfying (\ref{eq:triholo}) and (\ref{eq:strongapproximability}), with uniformly bounded Dirichlet energies. Let $u_\ell \rightharpoonup u$ weakly in $W^{1,2}$ and let $\Sigma$ be the blow-up set, i.e. $|\nabla u|^2 d{\Hc}^{4m} \rightharpoonup |\nabla u|^2 d{\Hc}^{4m} + \Theta(x) d{\Hc}^{4m-2} \res \Sigma$ with $\Theta \geq \eps_0>0$. Then for ${\Hc}^{4m-2}$-a.e. $x \in \Sigma$ we have 
\begin{equation}
\label{eq:sumofbubbles}                                             
\Theta(x)=\sum_{s=1}^{S_x} E(\phi_s),                                                                                                                                                                                                                                                                                                                                                                                                           \end{equation}
where $S_x \in \N$ and each $\phi_s:S^2 \to \Nc$ is a smooth non-constant harmonic map (these are called \textit{bubbles}).
\end{thm}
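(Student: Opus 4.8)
The plan is to follow the classical Sacks–Uhlenbeck / Lin bubbling scheme, but to use the special structure of triholomorphic maps — in particular the $W^{2,1}$-estimate of Theorem \ref{thm:W21estimate} and Lorentz space refinements — to rule out the ``neck energy'' that is the usual obstruction to a clean quantization statement. First I would localize: by the rectifiability of $\Sigma$ and standard blow-up, for $\Hc^{4m-2}$-a.e. $x\in\Sigma$ one may rescale the domain around $x$ so that, in the limit, the maps $u_\ell$ (suitably translated and dilated) converge on $\R^{4m}$ to a map that is invariant under the $(4m-2)$-dimensional tangent plane $T_x\Sigma$, i.e. the problem reduces to a two-dimensional one: maps from a disk $D^2\subset\R^2$ into $\Nc$ with the remaining coordinates frozen. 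Here one uses that blow-ups of almost stationary maps with the almost-monotonicity formula (Proposition \ref{Prop:monotonicity}) are still ``conical'' / translation invariant in the bad directions, exactly as in Lin's analysis, and that the sub-critical perturbation $f(x,u,\nabla u)$ scales away under the blow-up. The upshot is that $\Theta(x)$ equals the energy lost in a two-dimensional bubbling problem for (almost-)harmonic maps from surfaces.

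Next I would carry out the two-dimensional bubble-tree construction. On the blown-up scale the maps satisfy, in two dimensions, an equation of the form $\Delta u + A(\nabla u,\nabla u) = (\text{lower order})$, and the triholomorphic constraint (\ref{eq:triholo}) together with (\ref{eq:strongapproximability}) passes to the limit on the rescaled maps — here is where one must be careful, since (\ref{eq:strongapproximability}) is not weakly closed across $\mathrm{Sing}_u$, but after blow-up the relevant objects are genuine (approximate) triholomorphic maps on $\R^2\times\{pt\}$. I would then run the standard iteration: extract the first bubble by rescaling at a point of energy concentration, obtaining in the limit a nonconstant harmonic map $\phi_1:S^2\to\Nc$ (nonconstant because its energy is $\geq\eps_0$ by the $\eps$-regularity Proposition \ref{Prop:epsregularity}), subtract it, and repeat; the process terminates after finitely many steps $S_x$ because each bubble carries at least $\eps_0$ of energy and the total is bounded by $\Theta(x)$. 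What remains is to show the \emph{no-neck-energy} property: the energy on the annular ``neck'' regions between the base and the bubbles (and between bubbles) tends to zero, so that $\Theta(x)=\sum_{s=1}^{S_x}E(\phi_s)$ with no leftover.

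The main obstacle is precisely this neck analysis. The classical route (Lin–Rivi\`ere, Parker) controls the neck energy via a Pohozaev-type identity plus an angular-energy decay estimate, and the cleanest modern approach replaces the naive $W^{2,2}$/$L^2$ bounds with Lorentz-space bounds: one shows $\nabla u\in L^{2,1}$ on annuli (or that $\Delta u$ lies in $\mathscr{h}^1$ with controlled norm, which is exactly what Sections \ref{jacobianstructure}–\ref{LaplaceBeltrami} and Theorem \ref{thm:W21estimate} provide), and the $L^{2,1}$–$L^{2,\infty}$ duality upgrades the logarithmically-divergent energy estimate on a dyadic annulus to a geometrically decaying one, killing the neck in the limit. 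Concretely I would: (i) on each dyadic sub-annulus of a neck, use the Hardy-space structure of the right-hand side of (\ref{eq:2ndorderPDE}) to get $\|\nabla u_\ell\|_{L^{2,1}}$ bounded in terms of the energy \emph{in a slightly larger annulus}; (ii) feed this into the Pohozaev identity to show the energy is almost radially-distributed, with angular part controlled; (iii) sum the dyadic pieces using the Lorentz estimate to get a bound that vanishes as the neck becomes infinitely long. The sub-critical error term $f(x,u,\nabla u)$ must be shown to contribute negligibly to the Pohozaev identity at the blown-up scale, which follows from its linear (hence, after scaling, lower-order) dependence on $\nabla u$ and the almost-monotonicity formula. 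Once the neck energy is shown to vanish, the energy identity (\ref{eq:sumofbubbles}) follows, and the bubbles $\phi_s$ are harmonic maps $S^2\to\Nc$ as claimed; their non-constancy and finiteness in number are immediate from the $\eps$-regularity threshold.
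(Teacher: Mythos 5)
Your proposal follows essentially the same strategy as the paper: Federer-type dimension reduction at a generic point of $\Sigma$, the classical bubble-tree extraction with finiteness coming from the $\eps_0$ threshold, and a no-neck-energy argument that rests on the $L^{2,1}$--$L^{2,\infty}$ duality with the $W^{2,1}$/Hardy-space estimate supplying the uniform $L^{2,1}$ control --- this is exactly the Lin--Rivi\`ere scheme the paper adapts. You are also right to flag that one works with the $u_\ell$ themselves (which satisfy (\ref{eq:strongapproximability})) rather than with the weak limit $u$.

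The one place where you diverge from the paper is the mechanism for the $L^{2,\infty}$ smallness on the neck. You propose a Pohozaev identity plus an angular-decay estimate, with the sub-critical perturbation $f(x,u,\nabla u)$ shown negligible in the Pohozaev computation, and you localize the $L^{2,1}$ estimate to dyadic sub-annuli. The paper does something more direct: after the conformal change (\ref{eq:elongatedneck}) into the elongated cylinder, the energy on any unit piece of the neck must be below $\eps_0$ (otherwise another bubble domain would have been extracted there), so the $\eps$-regularity gradient estimate (\ref{eq:gradientestimate}) gives a pointwise $L^\infty$ bound on $\nabla W_\ell$ in the cylinder, which transforms back to precisely the weighted bound $|X_2-X_2^\ell|\,|\nabla_{X_2}\overline{u}_\ell| \leq \sqrt{\eps}$ defining the $L^{2,\infty}$ smallness --- no Pohozaev identity is needed. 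The paper also uses the global $W^{2,1}$ bound of Theorem \ref{thm:W21estimate}, restricted to good slices via the $W^{1,1}(\R^2)\hookrightarrow L^{2,1}(\R^2)$ embedding, rather than dyadic annular $L^{2,1}$ estimates. Both routes should close, but the paper's avoids tracking the perturbation error terms through a Pohozaev identity (which, since the domain metric is not flat and the equation has the lower-order term $f$, would require some care) and keeps the $L^{2,1}$ side global rather than annulus-by-annulus, which is a noticeably cleaner bookkeeping.
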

In other words we prove that the whole loss of energy encoded in the defect measure comes from bubbling off of $S^2$'s and there is no leftover energy in the necks connecting the bubbles\footnote{A quantization result of this type is not known for arbitrary stationary harmonic maps, in \cite{LR} the target is a sphere.}. For ${\Hc}^{4m-2}$-a.e. $x \in \Sigma$ we further have that the bubbles are \textit{holomorphic} $S^2$'s for a certain choice of almost complex structures $j_x$ and $J_x$ (depending on the point $x$!), see Proposition \ref{Prop:complexstructureofbubble}. This quantization result was proved, in the case where both $\Mc$ and $\Nc$ are hyperK\"ahler, by C.-Y. Wang \cite{Wang}. In the present work we extend that result to our more general geometric setting, using a different proof inspired by the techniques of Lin-Rivi\`{e}re \cite{LR}. In Remark \ref{oss:Walpuskipaper} and Section \ref{Fueter} we describe a \textit{gauge theoretical application} where the more general setting is required.

\medskip

Finer properties blow-up sets are generally a very difficult task and not much is known beyond rectifiability. With the second compactness result we start, in the case of interest, a more detailed analysis of the \textit{structure of the blow-up set} $\Sigma$ and provide a first important step towards the understanding of it. Roughly speaking we show, by means of a new idea based on a homological argument and a calibration argument (and using the quantization result), that whenever $\Sigma \setminus \text{Sing}_u$ has a bit of regularity, then it is pseudoholomorphic for a fixed almost complex structure and the bubbles produced are holomorphic spheres for a fixed complex structure in $\Nc$. Precisely we have:

\begin{thm}
\label{thm:localholomorphicitysmooth}
Under the same hypotheses of Theorem \ref{thm:quantization}, whenever we have an open ball $B^{4m} \subset \Mc \setminus \text{Sing}_u$ such that $\Sigma \cap B^{4m}$ is contained in a (connected) boundaryless Lipschitz $(4m-2)$-submanifold\footnote{Here we mean that for every point on the submanifold there exists an open cylinder in which the submanifold can be expressed as the graph of a Lipschitz function from $B_\rho^{4m-2} \to \R^2$ (upon rotating and relabelling the coordinate axes).} $\mathcal{L} \subset B^{4m}$ and $\Hc^{4m-2}(\Sigma \cap B^{4m})>0$ then there exist constants $a,b,c$ with $a^2 + b^2 + c^2 =1$ such that 
\begin{description}
 \item[(i)] $\Sigma \cap B^{4m}$ coincides with the whole of $\mathcal{L}$ and it is pseudo holomorphic with respect to the almost complex structure $ai +bj +ck$ (a posteriori this pseudo holomorphicity implies that $\mathcal{L}$ was actually smoooth);
 \item[(ii)] for every $x \in \Sigma  \cap B^{4m}$ the bubbles $(\phi_s)_*(S^2)$ that appear in (\ref{eq:sumofbubbles}) are holomorphic with respect to the complex structure  $-(aI +bJ +cK)$ on $\Nc$. Moreover $\Theta(x)$ in (\ref{eq:sumofbubbles}) is constant on $\Sigma \cap B^{4m}$.
\end{description}
\end{thm}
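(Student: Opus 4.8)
The plan is to exploit the rigidity coming from the quantization result (Theorem \ref{thm:quantization}) together with a calibration/homological argument, proceeding in three stages. First I would set up the local geometry: on the ball $B^{4m} \subset \Mc \setminus \text{Sing}_u$ the limit map $u$ is smooth, hence almost stationary, and since we are away from $\text{Sing}_u$ the strong approximability condition (\ref{eq:strongapproximability}) passes to the limit, so $u$ is still triholomorphic there and all the blow-up machinery of Section \ref{blowupanalysis} applies cleanly. The set $\Sigma \cap B^{4m}$ carries the structure of a stationary (or almost-stationary) rectifiable varifold of dimension $4m-2$ with density $\Theta(x) \geq \eps_0$, and by hypothesis its support lies in a connected boundaryless Lipschitz submanifold $\mathcal{L}$. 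The first genuine step is to show $\Sigma \cap B^{4m} = \mathcal{L}$ as sets with $\Theta$ locally constant: this should follow from a constancy-type theorem — the varifold $\Theta \, \Hc^{4m-2}\res\Sigma$ has (almost) vanishing first variation and is supported on a connected Lipschitz manifold, so its density must be constant on $\mathcal{L}$ (and in particular positive everywhere, forcing $\Sigma \cap B^{4m}$ to be all of $\mathcal{L}$), at least after observing that the error terms in the almost-stationarity are of lower order and do not affect the constancy conclusion on a connected piece.

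Second, I would extract the pointwise almost complex structure. For $\Hc^{4m-2}$-a.e.\ $x \in \Sigma$, by Proposition \ref{Prop:complexstructureofbubble} the bubbles at $x$ are holomorphic for some $j_x = a(x) i + b(x) j + c(x) k$ on $\Mc$ and the corresponding $-(a(x) I + b(x) J + c(x) K)$ on $\Nc$, with $a(x)^2 + b(x)^2 + c(x)^2 = 1$; moreover the tangent plane $T_x \Sigma$ should be forced (again via the blow-up/varifold tangent analysis) to be a $j_x$-complex line, i.e.\ $j_x$-invariant. So on $\mathcal{L}$ we get a measurable map $x \mapsto (a(x), b(x), c(x)) \in S^2$ such that the a.e.-defined tangent plane to the Lipschitz manifold $\mathcal{L}$ is $(a(x) i + b(x) j + c(x) k)$-invariant, and simultaneously $\Theta(x) = \sum_s E(\phi_s)$ decomposes into energies of $(-(aI+bJ+cK))$-holomorphic spheres. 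The calibration argument enters here: a holomorphic sphere for the complex structure $\omega_{a,b,c} := a\Omega_I + b\Omega_J + c\Omega_K$ on $\Nc$ has energy equal to the pairing of $u_\ell^*\omega_{a,b,c}$ against its image, and the closedness of $\Omega_I, \Omega_J, \Omega_K$ (hence of every $\omega_{a,b,c}$) together with (\ref{eq:strongapproximability}) means $u_\ell^*\omega_{a,b,c}$ is closed; one then uses this to write $\Theta(x)$ as (the density of) a \emph{calibrated} quantity that is, up to the coefficients $(a,b,c)$, rigid.

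Third — and this is where constancy of $(a,b,c)$ comes from — I would run the homological argument. The point is that $\int_{\mathcal{L} \cap B} \iota^*(a\omega_i + b\omega_j + c\omega_k)$ (or rather the analogous closed calibrating form on the domain built from $\omega_i,\omega_j,\omega_k$, modulo the non-closedness error which is controlled since $\|d\omega_\bullet\|_\infty$ is finite and the relevant currents have finite mass) computes a quantity that must equal $\Hc^{4m-2}(\mathcal{L}\cap B)$ only when $(a,b,c)$ is aligned with the tangent planes everywhere at once; a connected Lipschitz graph over which the tangent-plane field admits a calibrating form only if a single constant $(a,b,c)$ works. Equivalently, the quantization identity $\Theta(x) = \sum_s E(\phi_s)$ with each $E(\phi_s)$ equal to a topological/cohomological pairing (degree times area of the calibrating class) forces $\Theta$ to take values in a discrete set of ``masses'' attached to integral homology classes of $\Nc$, and since $\Theta$ is (from step one) constant on the connected $\mathcal{L}$, the corresponding complex structure $-(aI+bJ+cK)$ making those classes holomorphic is pinned down to be constant too — giving (ii). Feeding the constant $(a,b,c)$ back into the tangent-plane condition from step two yields that $T_x\mathcal{L}$ is $(ai+bj+ck)$-invariant for \emph{all} $x$, i.e.\ $\mathcal{L}$ is pseudoholomorphic for the (integrable-or-not) almost complex structure $ai+bj+ck$ restricted to $\mathcal{L}$; the standard elliptic regularity for pseudoholomorphic submanifolds then upgrades the Lipschitz regularity of $\mathcal{L}$ to smoothness, completing (i).

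The main obstacle I expect is the constancy step combined with making the calibration argument work in the \emph{almost}-hyper-Hermitian setting where $\omega_i, \omega_j, \omega_k$ need not be closed: one cannot directly integrate a closed form over $\mathcal{L}$, so the homological/calibration identity has to be set up on $\Nc$ (where $\Omega_I,\Omega_J,\Omega_K$ \emph{are} closed and (\ref{eq:strongapproximability}) gives closedness of pullbacks) and transferred to information about the bubble energies, and only then reflected back to the geometry of $\Sigma$ via the blow-up analysis. Controlling the interplay between ``which complex structure the bubbles see'' (a statement on $\Nc$, which is rigid) and ``which almost complex structure $T\Sigma$ is invariant under'' (a statement on $\Mc$, which a priori varies) — and showing these two $(a,b,c)$'s coincide and are globally constant on the connected piece — is the crux; the rest is an assembly of constancy theorems for stationary varifolds, the quantization result, and regularity theory for $J$-holomorphic subvarieties.
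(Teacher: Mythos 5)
Your high-level architecture — combine the quantization result, a homology class computation in $H_2(\Nc,\Z)$, and a calibration argument via the K\"ahler forms $a\Om_I+b\Om_J+c\Om_K$ — is the right one, and you correctly identify the crux (matching the almost complex structure $ai+bj+ck$ on $\Mc$ with the one $-(aI+bJ+cK)$ on $\Nc$ and showing they are constant along $\Sigma$). However, there are two genuine gaps where your plan departs from what is actually provable with the available tools.

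First, your Step 1 invokes a constancy-type theorem for the (almost-)stationary varifold $\Theta\,\Hc^{4m-2}\res\Sigma$ to conclude that $\Sigma\cap B^{4m}=\mathcal{L}$ with $\Theta$ constant, as an \emph{input} to the rest of the argument. This does not work: the constancy theorem requires the support to lie in a $C^2$ (at minimum $C^{1,1}$) submanifold, whereas $\mathcal{L}$ is only Lipschitz; moreover in the general almost hyper-Hermitian setting the defect varifold is at best \emph{almost} stationary, and the paper explicitly stresses that even when $\Mc$ is hyperK\"ahler one cannot assume the defect measure itself is stationary (only the pair $(u,\Sigma)$ is stationary in the sense of \cite{LiTian}, since condition (\ref{eq:strongapproximability}) may fail to pass to the weak limit). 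In the paper's proof, both the equality $\Sigma\cap B^{4m}=\mathcal{L}$ and the constancy of $\Theta$ are \emph{outputs} of the calibration/homology argument, not prerequisites: one shows $\Theta(P)=\Theta(Q)=M(C_P)=M(C_Q)$ for arbitrary good $P,Q$ by comparing masses of homologous calibrated cycles, and one shows $\mathcal{L}\setminus\Sigma=\varnothing$ by running the same comparison with $Q'\in\mathcal{L}\setminus\Sigma$, deducing the bubbles at $P$ form a \emph{trivial} cycle, contradicting $\Theta(P)\geq\eps_0$.

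Second, the concrete mechanism producing the homology identity is missing. The key construction is to connect two good points $P,Q\in\Sigma\cap B^{4m}$ by a small $2$-sphere $S$ in $B^{4m}$ that meets $\mathcal{L}$ (hence $\Sigma$) exactly at $P$ and $Q$ and, near $P$ and $Q$, coincides with two of the $2$-disks from the disk-foliation used in the good-slice selection (\ref{eq:choice3}) of the quantization proof. One then chooses a sequence of good spheres $S_\ell$ in a foliated tubular neighbourhood of $S$, observes that $(u_\ell)_*\llbracket S_\ell\rrbracket$ is a trivial cycle in $H_2(\Nc,\Z)$ because $\llbracket S_\ell\rrbracket=\p L$ bounds in $B^{4m}$ and $d(u_\ell^*\alpha)=0$ for all closed $2$-forms $\alpha$, and then decomposes $S_\ell$ into the bubble domains at $P$ and $Q$ plus necks whose contribution vanishes (by Theorem \ref{thm:quantization} together with the isoperimetric/flat-norm estimate ensuring small-mass cycles are null-homologous). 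This yields exactly
\begin{equation*}
\sum_{p=1}^{S_P}(v_p)_*\llbracket S^2\rrbracket - \sum_{q=1}^{S_Q}(w_q)_*\llbracket S^2\rrbracket \stackrel{H_2(\Nc,\Z)}{\equiv} 0,
\end{equation*}
and it is only at this point that calibration enters: $C_P$ and $C_Q$ are both mass minimizers in the common class (being calibrated by $a_P\Om_I+b_P\Om_J+c_P\Om_K$ and $a_Q\Om_I+b_Q\Om_J+c_Q\Om_K$ respectively), and since a minimizer in the class of $C_P$ must be calibrated by $C_P$'s calibration, one gets $(a_P,b_P,c_P)=(a_Q,b_Q,c_Q)$. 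Your sketch replaces this with a vaguer appeal to discreteness of the possible values of $\Theta$, which does not by itself pin down the complex structure; moreover it omits the observation (Remark \ref{oss:holo&antiholo}) that bubbles at a single point cannot mix holomorphic and anti-holomorphic for the same structure, which is what rules out hidden cancellations making $C_P$ or $C_Q$ trivial. Finally, smoothness of $\mathcal{L}$ is, as you say, an a posteriori bootstrap once pseudoholomorphicity for a fixed $ai+bj+ck$ is known; that part of your sketch is fine.
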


\begin{oss}
The result of \cite{LiTian} gives that, in the case when $\Mc$ is also hyperK\"ahler (and thus the maps $u_\ell$ are stationary harmonic), the varifold $\Theta(x) d{\Hc}^{4m-2} \res \Sigma$ is stationary away from $\text{Sing}_u$: well-known GMT results (see Allard's theorem \cite{Allard}) yield that for a stationary varifold there exists a dense open subset where the varifold is smooth (but stronger regularity properties for arbitrary stationary varifolds are unknown, although expected). In particular, whenever $u$ has a continuity point on $\Sigma$, the assumption on $\Sigma$ in Theorem \ref{thm:localholomorphicitysmooth} is somewhere satisfied. The latter theorem thus yields that each connected smooth piece of $\Sigma$ is actually \textit{(pseudo)-holomorphic} (of codimension $2$ in $\Mc$). The point here is that, for each such connected component, we have a fixed almost complex structure: this also implies that for all points in such a connected component the almost complex structure for which the bubbles are holomorphic is \textit{independent of the point} (compare the weaker statement mentioned after the quantization result). To our knowledge this is the first instance of application of a quantization result for stationary harmonic maps in a high-dimensional situation. 
\end{oss}

We cannot prove that the whole of $\Sigma$ needs to be (pseudo)-holomorphic for a unique almost complex structure. A priori it might be conceivable, for example, that several smooth pieces (each one pseudo holomorphic for a different structure) come together along a submanifold of codimension $3$ that is contained in $\text{Sing}_u$. Ruling out such a behaviour would of course impact the knowledge of the regularity properties of $u$. This is an aspect of the deep connections between the stationarity properties of $u$ and the regularity properties of $u$ and $\Sigma$. We conjecture the following.

\textbf{Conjecture}: With the same assumptions as in Theorems \ref{thm:quantization} and \ref{thm:localholomorphicitysmooth}, $\text{Sing}_u$ has codimension at least $3$ and $\Sigma$ is made of a finite number of (pseudo)-holomorphic varieties (each connected component is holomorphic for a certain almost complex structure on $\Mc$, possibly all of these structures might be the same), each of which is a cycle in $\Mc \setminus \text{Sing}_u$ and whose common boundary is the $(4m-3)$-dimensional stratum of $\text{Sing}_u$. 

\begin{oss}
A very important problem to address is the existence of a homogeneous triholomorphic map with a singular set of codimension $3$. The only known example is constructed in the case of a non-compact target (this is shown in \cite{ChenLi2} based on the construction of \cite{Atiyah Hitchin}). The non-existence of such maps would impact the previous conjecture, which could be strenghtned considerably: all varieties would be cycles and $\text{Sing}_u$ would have codimension at least $4$. 
\end{oss}

\begin{oss}
Another important question to address, on which however we do not focus at all in this work, is the optimal estimate on the size of the singular set for a triholomorphic map satisfying (\ref{eq:strongapproximability}): is it possible to show that the codimension is at least $4$? We wish to stress here that the example of triholomorphic map with singular set of codimension $3$ mentioned in the previous remark is stationary harmonic but does not satisfy (\ref{eq:strongapproximability}).
\end{oss}

\begin{oss}
We believe that the argument given in Section \ref{Holomorphicity properties of smooth blow-up sets} for the proof of Theorem \ref{thm:localholomorphicitysmooth} is robust enough to be pushed to a situation with weaker assumptions, however for the moment we prefer to limit ourself to the given statement and postpone any improvement to future work.
\end{oss}

\begin{oss}
\label{oss:Walpuskipaper}
As we described above, we extend the triholomorphic notion to the case where $\Mc$ is almost hyper-Hermitian (rather than hyperK\"ahler). We then show that the first order PDE (\ref{eq:triholo}) and the geometric structures on the manifolds $\Mc$ and $\Nc$ lead to a jacobian structure for $\Delta_g u$ which enables the use of Hardy space techniques (this plays an important role both in the $\eps$-regularity result and in the quantization result). Relaxing the assumptions on $\Mc$ is interesting not only as an example of ``almost stationary harmonic maps'', as we described earlier. Indeed the analytic and geometric techniques employed in the proofs of our results can find a rather direct application in the problem treated e.g. by T. Walpuski \cite{Walpuski}. In that work the author deals with triholomorphic sections in a bundle of hyperK\"ahler manifolds (with $m=1$, using our notations); the problem originates in Gauge Theory on $Spin(7)$ manifolds. The PDE satisfied by these triholomorphic sections corresponds to our equation (\ref{eq:triholo}) with in addition lower order perturbation terms (depending on $u$ but not on its derivatives). Our analysis can be carried out in the same fashion in the application under consideration, in particular it gives an affirmative answer to the quantization question, which is conjectured to be true but left open in \cite[page 5]{Walpuski}. We postpone to the final section a brief explanation of the modifications involved.
\end{oss}

\medskip

\textit{\textbf{Acknowledgements}}: The first author wishes to thank Tristan Rivi\`{e}re for very fruitful conversations while this work was in progress. The authors wish to thank Thomas Walpuski for bringing their attention to the topic of Fueter sections, described in Section \ref{Fueter}.

\section{Dirichlet energy and almost-monotonicity formula}
\label{Energy&Monotonicity}
We begin this section by recalling (see \cite{ChenLi1}, \cite{LiTian}) that the Dirichlet energy density for a triholomorphic map can be written, thanks to (\ref{eq:triholo}), as 
\begin{equation}
 \label{eq:DirEnergyTriholo}
\frac{1}{2}|\nabla u|_g^2=-\frac{1}{(2m-1)!}\left[ (\om_i)^{2m-1} \wedge u^* \Om_I + (\om_j)^{2m-1} \wedge u^* \Om_J + (\om_k)^{2m-1} \wedge u^* \Om_K\right].
\end{equation}

This holds pointwise a.e. on the domain for $u \in W^{1,2}$ and it suffices to prove it at a point $x$ by choosing coordinates so that $(T_x \Mc, i, j, k)$ is isometrically identified with $(\mathbb{H}^m, \mathscr{j_1}, \mathscr{j_2}, \mathscr{j_3})$, where $\{\mathscr{j_\ell}\}_{\ell=1}^3$ are the usual quaternion units and $\{\alpha_\ell\}_{\ell=1}^3$ are the standard associated K\"ahler forms. Consider a standard orthinormal basis $\{e_1, \j[1] e_1, \j[2] e_1, \j[3] e_1, ... , e_m, \j[1] e_m, \j[2] e_m, \j[3] e_m\}$. The differential $du$ is then a linear map into $T_{u(x)}\Nc$ satisfying $du =  I du \, \mathscr{j_1} + J du \, \mathscr{j_2} + K du \,\mathscr{j_3}$. The computation in \cite{ChenLi1} Proposition 2.2 shows that
for any map $v$, not necessarily triholomorphic, it holds

\begin{equation}
\label{eq:triholoinequality}
-\frac{1}{(2m-1)!}\left[ (\alpha_1)^{2m-1} \wedge v^* \Om_I + (\alpha_2)^{2m-1} \wedge v^* \Om_J + (\alpha_3)^{2m-1} \wedge v^* \Om_K\right]=$$ $$=\frac{1}{2}|d v|^2 - \frac{1}{4}|dv -  I dv \, \mathscr{j_1} - J dv \, \mathscr{j_2} - K dv \,\mathscr{j_3}|^2 .
\end{equation}
and in particular, for $u$ triholomorphic, we find

$$-\frac{1}{(2m-1)!}\left[ (\alpha_1)^{2m-1} \wedge u^* \Om_I + (\alpha_2)^{2m-1} \wedge u^* \Om_J + (\alpha_3)^{2m-1} \wedge u^* \Om_K\right]=$$ $$=\frac{1}{2}|d u|^2. $$

\medskip

The significance of (\ref{eq:triholoinequality}) and (\ref{eq:DirEnergyTriholo}) relies in the following fact. Under the assumptions 
\begin{description}
 \item[(a)]  $\Mc$ and $\Nc$ are Hyperk\"ahler (so $\om_i$, $\om_j$, $\om_k$, $\Om_I$, $\Om_J$, $\Om_K$ are closed)
 \item[(b)] whenever $\alpha$ is a closed $2$-form on $\Nc$ then $d(u^* \alpha)=0$, cf. (\ref{eq:strongapproximability})
\end{description}
the quantity $-\frac{1}{(2m-1)!}\int\limits_{\Mc}\left[ (\om_i)^{2m-1} \wedge u^* \Om_I + (\om_j)^{2m-1} \wedge u^* \Om_J + (\om_k)^{2m-1} \wedge u^* \Om_K\right]$ is a \textit{null Lagrangian} for local variations in the target and in the domain (see e.g. \cite{RT1} for the analogous statement and proof in the case of pseudoholomorphic maps). Then under such variations, for $u$ triholomorphic, in view of (\ref{eq:triholoinequality}) we can see that $u$ is \textit{weakly harmonic and stationary harmonic}. In the case that $u$ is a \textit{smooth} triholomorphic map it follows from (\ref{eq:DirEnergyTriholo}) that $u$ minimizes the Dirichlet energy in its homotopy class.

We will be interested in a more general setting, namely we will drop the assumptions that  $\om_i$, $\om_j$, $\om_k$ are closed. Then the same argument shows that a triholomorphic map $u$ is \textit{almost stationary harmonic}, in the sense specified in (\ref{eq:almoststationary}) below.

Let $X \in C^\infty_c$ be a vector field on $\Mc$ and $\Psi_t = Id + tX$ be the corresponding $1$-parameter family of domain variations. We know that \begin{equation}
\label{eq:nulllagrangian}
\int\limits_{\Mc}|\nabla (u \circ \Psi_t)|^2 \geq
\end{equation} 
$$\geq -\frac{1}{(2m-1)!}\left[ (\om_i)^{2m-1} \wedge (u \circ \Psi_t)^* \Om_I + (\om_j)^{2m-1} \wedge (u \circ \Psi_t)^* \Om_J + (\om_k)^{2m-1} \wedge (u \circ \Psi_t)^* \Om_K\right],$$ by (\ref{eq:triholoinequality}), with equality for $t=0$ since $u$ is triholomorphic. Now

$$ \frac{d}{dt}  (u \circ \Psi_t)^* \Om_I = \frac{d}{dt}  \Psi_t^* u^* \Om_I = \mathcal{L}_X  u^* \Om_I = d(\iota_X  u^* \Om_I),$$
where we used (\ref{eq:strongapproximability}) in the last equality. Using Stokes theorem

$$ \frac{d}{dt} \int\limits_{\Mc} (\om_i)^{2m-1} \wedge (u \circ \Psi_t)^* \Om_I = \int\limits_{\Mc} d((\om_i)^{2m-1}) \wedge(\iota_X  u^* \Om_I).$$
Doing the same for the remaining two terms we get

$$\frac{d}{dt}\int\limits_{\Mc}\left[ (\om_i)^{2m-1} \wedge (u \circ \Psi_t)^* \Om_I + (\om_j)^{2m-1} \wedge (u \circ \Psi_t)^* \Om_J + (\om_k)^{2m-1} \wedge (u \circ \Psi_t)^* \Om_K\right] =$$
$$=\int\limits_{\Mc} d((\om_i)^{2m-1}) \wedge(\iota_X  u^* \Om_I) + d((\om_j)^{2m-1}) \wedge(\iota_X  u^* \Om_J) +d((\om_k)^{2m-1}) \wedge(\iota_X  u^* \Om_K) $$
which is bounded in modulus by

$$(2m-1)! \int\limits_{\Mc} C |X| |\nabla u|^2, \text{ where } C= \|d \om_i \|_\infty+  \|d \om_j \|_\infty+ \|d \om_k \|_\infty.$$
Putting together the latter equation and (\ref{eq:nulllagrangian}) we find

\begin{equation}
\label{eq:almoststationary}
\left.\frac{d}{dt}\right|_{t=0}\int\limits_{\Mc}|\nabla (u \circ \Psi_t)|^2 \geq -C \int\limits_{\Mc} |X| |\nabla u|^2.
\end{equation}
If $\om_i$, $\om_j$, $\om_k$ are closed then $C = 0$ and we have that $u$ is stationary harmonic. If $C \neq 0$ then we have a bound on how much the energy can decrease. In other words (\ref{eq:almoststationary}) gives a notion of almost-stationarity. Remark that by zooming around any point of $\Mc$ at very small scales, we can make $C$ as small as we wish. This notion of almost-stationarity has the same flavour as the notions of almost area minimizing current or semicalibrated current in the setting of Plateau's problem.

\begin{oss}
We do not give the argument to show the weak harmonicity of $u$ for local variations in the target $\Nc \hookrightarrow \R^Q$ when $\Mc$ is hyperK\"ahler. The argument is similar to the above and does not even require assumption (\ref{eq:strongapproximability}), compare also \cite{RT1}. Moreover by dropping the closedness assumption on  $\om_i$, $\om_j$, $\om_k$ we find once again almost stationarity w.r.t. local variations in the target, namely:  for every $\phi \in C^{\infty}_c (\Mc, \R^Q)$ and denoting with $\Pi_{\Nc}$ the nearest point projection from $\R^Q$ onto $\Nc$ (well defined in a tubular neighbourhood of $\Nc$) we have $\left.\frac{d}{dt}\right|_{t=0}\int\limits_{\Mc}|\nabla \left(\Pi_{\Nc} \circ (u + t \phi)\right)|^2 \geq -C \int\limits_{\Mc} |\phi| |\nabla u|^2$.
\end{oss}

\textbf{Monotonicity properties}. We will see next that the Dirichlet energy of a triholomorphic map $u:\Mc \to \Nc$ satisfying (\ref{eq:strongapproximability}) has good almost-monotonicity properties. In the special important case that $\Mc$ is hyperK\"ahler then $u$ is actually a stationary harmonic: in this case the almost-monotonicity formula for the energy ratio of triholomorphic maps follows from the corresponding result for stationary harmonic maps \cite{Price}, \cite{SimonBook}. We present next a direct proof of this monotonicity formula, based directly on the first order PDE (\ref{eq:triholo}) and that has the advantage of extending to the case that $\Mc$ is merely almost hyper-Hermitian.

\begin{Prop}
 \label{Prop:monotonicity}
There exists $r_0>0$ such that for any $x \in \Mc$ and $r<r_0$ the following quantity is monotonically decreasing (weakly) as $r \downarrow 0$:
\begin{equation}
\label{eq:monotonicityofapproxenergy}
\frac{\left(1+(4m-2)r\right)}{r^{4m-2}}\left[\int_{B_r(x)}(\alpha_{1}^{2m-1})\wedge u^* \Om_I + (\alpha_{2}^{2m-1})\wedge u^* \Om_J +(\alpha_{3}^{2m-1})\wedge u^* \Om_K \right].
\end{equation}
This immediately translates into an almost-monotonicity formula for the energy ratio: there exists $r_0>0$ such that for $r\leq r_0$ and $x \in \Mc$
$$\frac{1}{r^{4m-2}}\int_{B_r(x)}|\nabla u|_g^2 = f(r) + O\left(r f(r)\right) \text{ with $f$ a non-decreasing function of $r$}. $$
 
\end{Prop}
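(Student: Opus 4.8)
\textbf{Proof proposal for Proposition \ref{Prop:monotonicity}.}

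The plan is to derive the monotonicity of the modified energy ratio directly from the first-order equation (\ref{eq:triholo}), in the spirit of the classical monotonicity argument for stationary harmonic maps but keeping track of the error terms produced by the non-closedness of $\om_i,\om_j,\om_k$ and by the fact that the coordinates only identify $(T_x\Mc,i,j,k)$ with $(\mathbb{H}^m,\j[1],\j[2],\j[3])$ at the single point $x$. First I would fix $x\in\Mc$, work in geodesic normal coordinates centred at $x$ in which the triple $(i,j,k)$ agrees at the origin with the standard quaternionic structure, and write $\Lambda(r):=\int_{B_r(x)}\big[\alpha_1^{2m-1}\wedge u^*\Om_I+\alpha_2^{2m-1}\wedge u^*\Om_J+\alpha_3^{2m-1}\wedge u^*\Om_K\big]$, which by (\ref{eq:triholoinequality}) is $\tfrac12\int_{B_r(x)}|du|^2$ up to a correction of order $r$ times the energy coming from the discrepancy between the frozen forms $\alpha_\ell$ and the actual forms $\om_i,\om_j,\om_k$ (this discrepancy is $O(r)$ in $C^0$ because the two agree at the origin and the forms are smooth). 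The point of phrasing the monotone quantity in terms of $\Lambda(r)$ rather than directly in terms of the Dirichlet energy is precisely that $\Lambda$ is governed by a differential identity coming from Stokes' theorem.

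Next I would differentiate $\Lambda(r)$ in $r$. On the one hand, $\Lambda'(r)=\int_{\p B_r(x)}\big[\alpha_1^{2m-1}\wedge u^*\Om_I+\cdots\big]$, which after using (\ref{eq:triholoinequality}) restricted to the sphere splits as a boundary energy term $\tfrac12\int_{\p B_r}|du|^2$ plus the tangential/radial cross terms that arise in the classical monotonicity computation (the ``$|\p u/\p r|^2$'' gain). On the other hand, I would compute $\Lambda(r)$ itself by applying Stokes' theorem to the closed (on $\Nc$) forms $\Om_I,\Om_J,\Om_K$: since $d(u^*\Om_I)=0$ by (\ref{eq:strongapproximability}), we get $d(\alpha_\ell^{2m-1}\wedge u^*\Om_\bullet)=d(\alpha_\ell^{2m-1})\wedge u^*\Om_\bullet$, and the $\alpha_\ell$ being the \emph{standard} flat Kähler forms are actually closed, so in the frozen coordinates this exterior derivative vanishes and the only contribution is the error $d(\om_\bullet^{2m-1})\wedge(\text{something})$ together with the $O(r)$ difference between $\alpha_\ell$ and $\om_\bullet$; more precisely I would use the interior-multiplication/Lie-derivative manipulation already displayed before the proposition (with $X$ the radial vector field $r\p_r$) to write $\Lambda(r)$ as a sum of a boundary term over $\p B_r$ (the ``$(4m-2)\Lambda(r)$-type'' scaling term, produced by $\iota_{r\p_r}$ acting on the top part of $\alpha_\ell^{2m-1}\wedge u^*\Om_\bullet$) plus a bulk error controlled by $C\int_{B_r}|x|\,|\nabla u|^2\le Cr\int_{B_r}|\nabla u|^2$. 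Comparing the two expressions for $\Lambda$ and $\Lambda'$ yields a differential inequality of the form $\frac{d}{dr}\big(r^{-(4m-2)}\Lambda(r)\big)\ge -Cr^{-(4m-2)}\cdot r\int_{B_r}|\nabla u|^2\ge -C'\,r^{-(4m-2)}\Lambda(r)$ after re-expressing the energy via $\Lambda$; the prefactor $(1+(4m-2)r)$ is then the integrating factor that absorbs the $-C'\Lambda$ term and makes the quantity (\ref{eq:monotonicityofapproxenergy}) genuinely non-increasing. The final ``translation'' sentence follows by feeding (\ref{eq:triholoinequality}) back in: $\Lambda(r)=\tfrac12\int_{B_r}|\nabla u|^2+O(r\int_{B_r}|\nabla u|^2)$, so dividing the monotone quantity by its leading constant gives $r^{-(4m-2)}\int_{B_r}|\nabla u|^2=f(r)+O(rf(r))$ with $f$ non-decreasing, as claimed.

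The main obstacle I expect is the careful bookkeeping of error terms: one must verify that \emph{all} the corrections — those coming from the curvature of $\Mc$ (normal coordinates vs.\ flat metric), those from the pointwise-only identification of $(i,j,k)$ with the quaternionic units, and those from $d(\om_i),d(\om_j),d(\om_k)\ne0$ — are uniformly $O(r)$ times the local energy, with a constant depending only on the geometry of $\Mc$ and not on $u$, and in particular that no term of order $r^0$ survives to spoil monotonicity. A secondary technical point is justifying the differentiation in $r$ and the use of Stokes' theorem for a merely $W^{1,2}$ map $u$: this requires the coarea formula and an approximation/Fubini argument showing that for a.e.\ $r$ the slice $u|_{\p B_r}$ is $W^{1,2}$ and the boundary integrals make sense, which is standard but should be invoked. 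Everything else is the classical monotonicity manipulation adapted to the quaternionic setting.
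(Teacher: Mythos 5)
Your high-level strategy is the right one and matches the paper's: freeze the almost complex structures at the centre in normal coordinates, exploit the Cartan/Stokes manipulation with the radial vector field and the closedness of $u^*\Om_\bullet$ (from (\ref{eq:strongapproximability})), identify the ``good'' term as $|du(\p_r)|^2$ via the triholomorphic equation, and quantify the discrepancy between $\alpha_\ell$ and $\om_\bullet$ as $O(|x|)$. Where your sketch deviates from the paper, and where I see a genuine gap, is in the absorption of the error term and the resulting claim about the prefactor.

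Concretely, after the Stokes step the bulk error carries the same singular weight $|x|^{-(4m-2)}$ as the good term: the lower bound one actually gets is of the form $\frac{|du(\p_r)|^2 - C\epsilon|x|\,|\nabla u|_g^2}{|x|^{4m-2}}$, so the error contribution over $B_R\setminus B_s$ is $\epsilon\int \frac{|\nabla u|^2}{|x|^{4m-3}}$, \emph{not} $Cr\int_{B_r}|\nabla u|^2$ as you write. That integral is not directly dominated by the energy; to control it you would have to slice with the coarea formula, integrate by parts in $r$, and run a Gronwall argument — which can be made to work, but it is not the ``$\ge -C'r^{-(4m-2)}\Lambda(r)$'' one-liner you state, and it would give a monotone quantity with an exponential-type prefactor, not the precise factor $(1+(4m-2)r)$. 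The paper instead avoids the Gronwall route entirely by modifying the calibration-style $1$-form: it adds $\frac{\iota_{x\p_x}(\alpha_\ell^{2m-1})}{|x|^{4m-3}}$ to $\frac{\iota_{x\p_x}(\alpha_\ell^{2m-1})}{(4m-2)|x|^{4m-2}}$, and the exterior derivative of the new term produces the extra positive piece $\frac{|x|\,\alpha_\ell^{2m-1}}{|x|^{4m-2}}\wedge u^*\Om_\bullet$, which (summed over $\ell$) is $\approx \frac{|x|\,|\nabla u|^2}{|x|^{4m-2}}$ and dominates the negative error $-\frac{C\epsilon|x|\,|\nabla u|^2}{|x|^{4m-2}}$ \emph{pointwise} once $\epsilon$ is small. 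That makes the annular integrand nonnegative directly, and the boundary evaluation of the modified $1$-form at $|x|=r$ is precisely what produces the factor $\frac{1+(4m-2)r}{r^{4m-2}}$ in (\ref{eq:monotonicityofapproxenergy}) — it is not an integrating factor for an ODE. So the prefactor is an output of the algebraic choice of the extra term, not something one can conjure a posteriori to absorb a Gronwall error. You should either reproduce this modification of the $1$-form, or carry out the coarea/integration-by-parts/Gronwall argument carefully; in the latter case you will prove the displayed ``almost-monotonicity for the energy ratio'' but not the exact form of (\ref{eq:monotonicityofapproxenergy}).

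Two smaller points worth flagging: (i) (\ref{eq:triholoinequality}) is an identity at a point where $(i,j,k)$ are the standard quaternionic units, so on the rest of $B_r$ one must again use the $O(|x|)$ comparison to convert $\Lambda(r)$ into the Dirichlet energy — you note this, but it should be folded into the same $\epsilon$-smallness hypotheses; (ii) the paper works with integrals over annuli $B_R\setminus B_s$ throughout, which sidesteps the need to differentiate $\Lambda(r)$ and justify the a.e.\ slicing for a merely $W^{1,2}$ map; if you differentiate in $r$ you do need the standard Fubini/coarea justification you mention.
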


\begin{proof}
 
\textbf{Case 1}: Let us consider first the case when $\Mc$ is replaced by the unit ball in $\R^{4m}$, endowed with the flat Euclidean metric and with the standard complex structures $\mathscr{j}_1$, $\mathscr{j}_2$, $\mathscr{j}_3$ and associated K\"ahler forms $\alpha_1$, $\alpha_2$, $\alpha_3$. In other words the domain is the unit ball in $\mathbb{H}^m$, the $m$-dimensional quaternionic vector space. 

We will denote by $x \p_x$ the radial vector field $\sum_{a=1}^{4m} x_a \p_{x_a}$, by $\p_r$ the radial field of unit length $\frac{x \p_x}{|x|}$, for $x=(x_1, ..., x_{4m}) \neq (0, ... ,0)$, and by $dr=d|x|$ the differential of the function $x$ (that is the metric dual to $\p_r$ for the flat metric). Computing explicitly the Lie derivative, for each $\ell \in \{1,2,3\}$, we get $\mathcal{L}_{x \p_x} \alpha_{\ell} = 2 \alpha_{\ell}$ and therefore, distributing the derivative on the wedge product, $$\mathcal{L}_{x \p_x} (\alpha_{\ell}^{2m-1}) = (4m-2) (\alpha_{\ell}^{2m-1}).$$ By the Cartan formula $\mathcal{L} = \iota d + d \iota$ we can see that the $1$-form $\iota_{x \p_x} (\alpha_{\ell}^{2m-1})$ satisfies 
 
$$d(\iota_{x \p_x} (\alpha_{\ell}^{2m-1})) = (4m-2) (\alpha_{\ell}^{2m-1})\,\, \text{ for each } \,\,\ell \in \{1,2,3\}.$$
Denote by $\atanl$ the tangential part of $\alpha_{\ell}^{2m-1}$, that is (for $|x|\neq 0$)

\begin{equation}
\label{eq:tangentialpart}
\frac{1}{(2m-1)!}\atanl = \frac{1}{(2m-1)!}\alpha_{\ell}^{2m-1} - dr \wedge\iota_{\p r} \frac{1}{(2m-1)!}(\alpha_{\ell}^{2m-1}) = \star(\p_r \wedge \mathscr{j}_\ell \p_r) ,
\end{equation}
where $\star$ denotes the Hodge star operation combined with the metric duality vectors/covectors. For the sequel let us remark that (here $|x|\neq 0$)

$$d\left(\frac{\iota_{x \p_x} (\alpha_{\ell}^{2m-1})}{|x|^{4m-2}}\right) = \frac{d (\iota_{x \p_x} (\alpha_{\ell}^{2m-1}))}{|x|^{4m-2}} - \frac{(4m-2) d|x| \wedge \iota_{x \p_x} (\alpha_{\ell}^{2m-1})}{|x|^{4m-1}} =$$ $$=  (4m-2)\frac{  (\alpha_{\ell}^{2m-1})- d|x| \wedge \iota_{ \p_r} (\alpha_{\ell}^{2m-1})}{|x|^{4m-2}} = (4m-2) \frac{\atanl}{|x|^{4m-2}}.$$

We can now start the computations for the monotonicity formula, denoting by $B_R$ and $B_s$ two balls cenetered at the origin with radii respectively $R$ and $s$ (with $R>s$) and keeping in mind that $u^*\Om_I$ and $\alpha_\ell$ are closed forms:

$$\frac{1}{R^{4m-2}} \int_{B_R} (\alpha_1^{2m-1}) \wedge u^*\Om_I - \frac{1}{s^{4m-2}} \int_{B_s} (\alpha_1^{2m-1}) \wedge u^*\Om_I =$$

$$=\int_{\p B_R} \frac{\iota_{x \p_x} (\alpha_{1}^{2m-1})}{(4m-2)R^{4m-2}} \wedge  u^*\Om_I -  \int_{\p B_s} \frac{\iota_{x \p_x} (\alpha_{1}^{2m-1})}{(4m-2)s^{4m-2}} \wedge  u^*\Om_I =$$

$$=\int_{\p B_R} \frac{\iota_{x \p_x} (\alpha_{1}^{2m-1})}{(4m-2)|x|^{4m-2}} \wedge  u^*\Om_I -  \int_{\p B_s} \frac{\iota_{x \p_x}(\alpha_{1}^{2m-1})}{(4m-2)|x|^{4m-2}} \wedge u^*\Om_I =$$
$$=\int_{\p(B_R \setminus B_s)}\frac{\iota_{x \p_x} (\alpha_{1}^{2m-1})}{(4m-2)|x|^{4m-2}} \wedge u^*\Om_I =$$

\begin{equation}
\label{eq:mono1}
=\int_{B_R \setminus B_s}d\left(\frac{\iota_{x \p_x} (\alpha_{1}^{2m-1})}{(4m-2)|x|^{4m-2}}\right)  \wedge u^*\Om_I =  \int_{B_R \setminus B_s}\frac{ \atan[1]}{|x|^{4m-2}} \wedge u^*\Om_I .
\end{equation}

In the last step we used (\ref{eq:strongapproximability}). Observe now, in view of (\ref{eq:tangentialpart}), that 

$$\frac{1}{(2m-1)!}(\atan[1] \wedge  u^*\Om_I)(d\text{vol}^{4m}) =(u^*\Om_I) (\p_r \wedge \mathscr{j}_1 \p_r)=$$

\begin{equation}
\label{eq:mono2}
 = \Om_I  (du(\p_r) \wedge du(\mathscr{j}_1 \p_r)) = \langle I du(\p_r) , du(\mathscr{j}_1 \p_r \rangle_{h|_{u(x)}}= -\langle du(\p_r) , I du(\mathscr{j}_1 \p_r \rangle_{h|_{u(x)}},
\end{equation}
where $h|_{u(x)}$ denotes the metric in the target at the point $u(x)$.

Writing the analogues of (\ref{eq:mono1}) and (\ref{eq:mono2}) for the remaining two terms, i.e. those involving the structures $(\mathscr{j}_2, J)$ and $(\mathscr{j}_3, K)$, and adding up the three parts to get the Dirichlet energy, as indicated in (\ref{eq:DirEnergyTriholo}), we get

\begin{equation}
 \label{eq:mono3}
 \frac{1}{R^{4m-2}}\int\limits_{B_R}|\nabla u|^2 - \frac{1}{s^{4m-2}}\int\limits_{B_s}|\nabla u|^2 = \!\!\!\int\limits_{B_R \setminus B_s} \!\!\!\frac{\langle du(\p_r) , I du(\mathscr{j}_1 \p_r) + J du(\mathscr{j}_2 \p_r )+K du(\mathscr{j}_3 \p_r) \rangle)}{|x|^{4m-2}}  .
\end{equation}
Using the triholomorphic equation $du =    I du \mathscr{j}_1  + J du \mathscr{j}_2  +K du \mathscr{j}_3 $ in the r.h.s. of (\ref{eq:mono3}) we finally conclude

\begin{equation}
 \label{eq:monotonocityformula}
 \frac{1}{R^{4m-2}}\int\limits_{B_R}|\nabla u|^2 - \frac{1}{s^{4m-2}}\int\limits_{B_s}|\nabla u|^2 = \int\limits_{B_R \setminus B_s} \frac{|du(\p_r)|^2}{|x|^{4m-2}}  .
\end{equation}

This is indeed the usual monotonicity formula for stationary harmonic maps having as a domain an Euclidean ball \cite{SimonBook}. Of course there was nothing special in the choice made of centering the balls $B_R$ and $B_s$ at the origin, we can write the same formula for arbitrary centers, by replacing the radial vector field $x \p_{x}$ emanating from the origin with one emanating from the new center. We remark that the classical method for obtaining the monotonicity formula relies on the stationarity of the map with respect to radial variations in the domain, see  \cite{SimonBook}. We have replaced this by the Lie derivative computation with respect to  $x \p_x$, in order to exploit the first oder information (\ref{eq:triholo}). The proof just given is in the style of the proof of the monotonicity formula for calibrated cycles \cite{HL}. 

\medskip

\textbf{Case 2}. In the following we will modify this argument to obtain an almost monotonicity formula for $\Mc$ almost hyper-Hermitian. Let $x_0 \in \Mc$ and consider a geodesic ball around $x_0$, taking normal coordinates so that we work (up to a dilation) in the unit ball of $\R^{4m}$ centered at the origin. By suitably choosing coordinates we can make sure that $\om_i(0)$, $\om_j(0)$, $\om_k(0)$ coincide with $\alpha_1$, $\alpha_2$, $\alpha_3$. In this way $\alpha_1$ is the parallel extension of $\om_i(0)$ to the unit ball (parallel with respect to the flat metric), and similarly $\alpha_2$ and $\alpha_3$ are the parallel extension respectively of $\om_j(0)$, $\om_k(0)$. Since $|\om_i - \alpha_1|(x) \leq \|\nabla \om_i\|_\infty |x|$ we can assume, up to dilating enough, that on the unit ball $B_1(0)$ we have
$$|\om_i - \alpha_1|(x) \leq \epsilon |x|, \,\, |\om_j - \alpha_2|(x) \leq \epsilon |x|, \,\, |\om_k - \alpha_3|(x) \leq \epsilon |x|$$
for some small $\epsilon>0$ and similarly
$$|i - \mathscr{j}_1|(x) \leq \epsilon |x|, \,\, |j - \mathscr{j}_2|(x) \leq \epsilon |x|, \,\, |k - \mathscr{j}_3|(x) \leq \epsilon |x|$$while for the metric, by the choice of normal coordinates, $$|g - g_0|(x) \leq \epsilon |x|^2, $$ where $g_0$ is the Euclidean metric. In other words we are dealing with a small perturbation of the situation that we treated earlier. We choose $\epsilon$ so that $\epsilon<\frac{1}{10 C}$ for the unversal constant $C$ that will appear in (\ref{eq:lowerboundtang}).

\medskip

The first observation is that, since the Dirichlet energy is given by (\ref{eq:DirEnergyTriholo}), in view of the smallness assumptions just made, the quantity

\begin{equation}
 \label{eq:approxenergydensity}
(\alpha_1)^{2m-1} \wedge u^* \Om_I + (\alpha_2)^{2m-1} \wedge u^* \Om_J + (\alpha_3)^{2m-1} \wedge u^* \Om_K
\end{equation}
is a good approximation of the energy density $|\nabla u|_g^2(x)$, i.e. their difference is bounded in modulus (pointwise a.e.) by $\tilde{C}\epsilon|x||\nabla u|_g^2(x)$ for some dimensional constant $\tilde{C}$. Similarly the quantity

\begin{equation}
 \label{eq:approxenergy}
\frac{-1}{R^{4m-2}} \int_{B_R} (\alpha_1)^{2m-1} \wedge u^* \Om_I + (\alpha_2)^{2m-1} \wedge u^* \Om_J + (\alpha_3)^{2m-1} \wedge u^* \Om_K
\end{equation}
is a good approximation of the energy ratio $\frac{1}{R^{4m-2}} \int_{B_R} |\nabla u|_g^2$ , i.e. their difference is bounded in modulus by $\frac{\tilde{C} \epsilon R}{R^{4m-2}} \int_{B_R} |\nabla u|_g^2$ (a higher order term in $R$). So the energy ratio and (\ref{eq:approxenergy}) have the same asymptotic behaviour, therefore we will aim for an almost monotonicity formula for (\ref{eq:approxenergy}). This will be achieved through a modification of the argument given in the flat case above.

A key observation for this modification is to observe that, in the present situation, although the following expression, i.e. the r.h.s. of (\ref{eq:mono1}),

\begin{equation}
\label{eq:approxrhs}
 \atan[1] \wedge  u^*\Om_I +\atan[2] \wedge  u^*\Om_J + \atan[1] \wedge  u^*\Om_K 
\end{equation}
is no longer $\geq 0$, we still have a good lower bound, thanks to the fact that we are handling small perturbations of the previous case. Let us see why. As before we have that this expression is 

$$\text{(expression (\ref{eq:approxrhs}))}=\langle du(\p_r) , I du(\mathscr{j}_1 \p_r) + J du(\mathscr{j}_2 \p_r )+K du(\mathscr{j}_3 \p_r) \rangle = $$ $$=\langle du(\p_r) , I du(i \p_r) + J du(j \p_r )+K du(k \p_r) \rangle- \langle du(\p_r) , I du(i-\mathscr{j}_1) \p_r + J du(j-\mathscr{j}_2) \p_r +K du(k-\mathscr{j}_3) \p_r \rangle$$
$$= \langle du(\p_r),du(\p_r)\rangle - \langle du(\p_r) , I du(i-\mathscr{j}_1) \p_r + J du(j-\mathscr{j}_2) \p_r +K du(k-\mathscr{j}_3) \p_r \rangle \geq$$

\begin{equation}
 \label{eq:lowerboundtang}
 \geq |du(\p_r)|^2 - C\epsilon |x||\nabla u|_g^2,
\end{equation}
for some universal constant $C$. This is the desired lower bound\footnote{In view of this lower bound it is natural to attempt to replace expression (\ref{eq:approxrhs}) in the r.h.s. of (\ref{eq:mono3}) with something of the type $\text{'expression (\ref{eq:approxrhs})} + \tilde{K}  |x| \text{expression (\ref{eq:approxenergydensity})'}$, which is non-negative for $\tilde{K}$ large enough. However we want, in addition, that this non-negative quantity should allow an ``integration by parts-  trick'' as in (\ref{eq:mono1}). For this reason we will need to modify this tentative quantity slightly.}.

%
In order to complete the proof we remark that

$$d\left(\frac{\iota_{x \p_x} (\alpha_{\ell}^{2m-1})}{|x|^{4m-3}}\right) = \frac{d (\iota_{x \p_x} (\alpha_{\ell}^{2m-1}))}{|x|^{4m-3}} - \frac{(4m-3) d|x| \wedge \iota_{x \p_x}(\alpha_{\ell}^{2m-1})}{|x|^{4m-2}} =$$ $$=  \frac{ (4m-2) (\alpha_{\ell}^{2m-1})}{|x|^{4m-3}}- \frac{(4m-3) d|x| \wedge \iota_{ \p_r} \alpha_{\ell}}{|x|^{4m-3}} = \frac{ \alpha_{\ell}^{2m-1}+(4m-3)\atanl}{|x|^{4m-3}}=$$

\begin{equation}
 \label{eq:variationsecondterm}
 = |x|\left(\frac{ \alpha_{\ell}^{2m-1}}{|x|^{4m-2}}+\frac{(4m-3)\atanl}{|x|^{4m-2}}\right)
\end{equation}
This suggests\footnote{This is how we modify the tentative quantity from the previous footnote.} that a suitable choice for the ``r.h.s. of a monotonicity formula'' is (here $|x|\neq 0$)

$$d\left(\frac{\iota_{x \p_x} (\alpha_{1}^{2m-1})}{(4m-2)|x|^{4m-2}}+\frac{\iota_{x \p_x} (\alpha_{1}^{2m-1})}{|x|^{4m-3}}\right)\wedge u^* \Om_I + \text{$(\alpha_2,J)$-term} + \text{$(\alpha_3,K)$-term} =$$
$$ =\left( \frac{(1+(4m-3)|x|)\atan[1]}{|x|^{4m-2}} +\frac{ |x|\alpha_{1}^{2m-1}}{|x|^{4m-2}}\right) \wedge u^* \Om_I + \text{$(\alpha_2,J)$-term} + \text{$(\alpha_3,K)$-term} \geq $$
$$\geq \frac{\left(1+(4m-3)|x|\right)\left(|du(\p_r)|^2 - C\epsilon |x||\nabla u|_g^2\right) +|x|(1-\tilde{C}\epsilon|x|)|\nabla u|_g^2}{|x|^{4m-2}}  \geq  $$
$$\geq \frac{\left(1-C\epsilon\right)|x||\nabla u|_g^2-\left((4m-3)C+ \tilde{C}\right)\epsilon|x|^2|\nabla u|_g^2}{|x|^{4m-2}}  \geq 0 $$
for $\epsilon<\frac{1}{10 C}$ and $|x|\leq r_0$ for a small enough $r_0$.

We can now integrate by parts as follows (here $B_s$ and $B_R$ are balls centered at $0$ of radii $s<R<r_0$)

$$0 \leq \!\!\!\int\limits_{B_R \setminus B_s}\!\!\!d\left(\frac{\iota_{x \p_x} (\alpha_{1}^{2m-1})}{(4m-2)|x|^{4m-2}}+\frac{\iota_{x \p_x} (\alpha_{1}^{2m-1})}{|x|^{4m-3}}\right)\wedge u^* \Om_I + \text{$(\alpha_2,J)$-term} + \text{$(\alpha_3,K)$-term} =$$

$$\int\limits_{\p(B_R \setminus B_s)}\!\!\!\left(\frac{\iota_{x \p_x} (\alpha_{1}^{2m-1})}{(4m-2)|x|^{4m-2}}+\frac{\iota_{x \p_x} (\alpha_{1}^{2m-1})}{|x|^{4m-3}}\right)\wedge u^* \Om_I + \text{$(\alpha_2,J)$-term} + \text{$(\alpha_3,K)$-term} = $$

$$=\frac{\left(1+(4m-2)R\right)}{(4m-2)R^{4m-2}}\left[\int\limits_{\p B_R}\iota_{x \p_x} (\alpha_{1}^{2m-1})\wedge u^* \Om_I + \text{$(\alpha_2,J)$-term} + \text{$(\alpha_3,K)$-term}\right] - $$
$$-\frac{\left(1+(4m-2)s\right)}{(4m-2)s^{4m-2}}\left[\int_{\p B_s}\iota_{x \p_x} (\alpha_{1}^{2m-1})\wedge u^* \Om_I + \text{$(\alpha_2,J)$-term} + \text{$(\alpha_3,K)$-term}\right]=$$

$$=\frac{\left(1+(4m-2)R\right)}{R^{4m-2}}\left[\int_{B_R}(\alpha_{1}^{2m-1})\wedge u^* \Om_I + \text{$(\alpha_2,J)$-term} + \text{$(\alpha_3,K)$-term}\right] - $$
$$-\frac{\left(1+(4m-2)s\right)}{s^{4m-2}}\left[\int_{B_R}(\alpha_{1}^{2m-1})\wedge u^* \Om_I + \text{$(\alpha_2,J)$-term} + \text{$(\alpha_3,K)$-term}\right].$$
This provides the desired monotonicity formula (\ref{eq:monotonicityofapproxenergy}), from which the almost monotonicity of Proposition \ref{Prop:monotonicity} follows immediately.

\end{proof}

\textbf{Immediate consequences of almost monotonicity}. The almost monotonicity formula implies that \textit{for every point $x \in \Mc$} there is a well-defined limit of the energy ratio $\frac{1}{r^{4m-2}}\int_{B_r(x)}|\nabla u|_g^2$ as $r \to 0$. This is called energy density of $u$ at $x$ and denoted by $\Theta(u,x)$. A standard argument exploiting the almost monotonicity also gives that $\Theta(u,x)$ is an upper semi-continuous function of $x$.

Moreover there exist $r_0>0$ and $C>0$, depending only on the geometric data, such that whenever $x\in \Mc$ and $r<r_0$ we have $\frac{1}{r^{4m-2}}\int_{B_r(x)}|\nabla u|_g^2 \geq \Theta(u,x) - Cr$.

\section{Analysis of the PDE}

\subsection{Jacobian structure}
\label{jacobianstructure}

\begin{Prop}
\label{Prop:JacobianStructure}
Let $u: \Mc \rightarrow \Nc \subset \R^Q$ be a triholomorphic map in $W^{1,2}(\Mc, \Nc)$ and $\Nc \hookrightarrow \R^Q$. Then $\Delta_g u = (f^1, ... ,f^Q)$, where the $f^j$'s are in the Hardy space $\mathscr{h}^1$ and $\|f\|_{\mathscr{h}^1(\Uc)} \leq C \int\limits_{\Uc}|\nabla u|^2$ for $\Uc \subseteq \Mc$, where $C>0$ is a univeresal constant (i.e. independent of the particular $u$). Here $\Delta_g$ denotes the Laplace-Beltrami operator on $(\Mc,g)$.
%
\end{Prop}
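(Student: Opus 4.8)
The plan is to exploit the first-order equation (\ref{eq:triholo}) to rewrite the nonlinear term $A(\nabla u,\nabla u)$ in the harmonic-map PDE (\ref{eq:2ndorderPDE}) as a sum of jacobian-type (div-curl) quantities, for which the Coifman--Lions--Meyer--Semmes theorem \cite{CLMS} gives Hardy-space control. Since the statement is local and $\Delta_g$ differs from the flat Laplacian by lower-order terms with smooth bounded coefficients (which map $L^2$-gradients into $\mathscr{h}^1$ trivially, indeed into $L^2\subset\mathscr{h}^1$ after the local cutoff, or are absorbed using the sub-criticality of the perturbation $f$ in (\ref{eq:2ndorderPDE})), it suffices to work in a coordinate ball where $(T_x\Mc,i,j,k)$ is identified with $(\mathbb{H}^m,\mathscr{j}_1,\mathscr{j}_2,\mathscr{j}_3)$ up to an error controlled by $\epsilon|x|$, exactly as in the proof of Proposition \ref{Prop:monotonicity}. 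First I would record that $u$ takes values in $\Nc$, so $\Delta u + A(\nabla u,\nabla u)\perp T_u\Nc$, i.e. $\Delta u^a = -A^a_{bc}(u)\,\nabla u^b\cdot\nabla u^c$ modulo the smooth sub-critical term $f$; the term involving $f$ lies in $L^{2}$ hence contributes to $\mathscr{h}^1$ with norm bounded by the energy, so the whole issue is the quadratic term.

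The key step is to show $\nabla u^b\cdot\nabla u^c$ has a div-curl structure once (\ref{eq:triholo}) is used. Writing the triholomorphic equation componentwise in the quaternionic frame, $\partial_{x_a}u = I(u)\,\partial_{\mathscr{j}_1 x_a}u + J(u)\,\partial_{\mathscr{j}_2 x_a}u + K(u)\,\partial_{\mathscr{j}_3 x_a}u$, and using that $I,J,K$ are parallel (hyperKähler) and skew-adjoint with $A$ being essentially $\nabla$ of the projection, one can substitute one factor of $\nabla u$ using this identity and rearrange. Concretely, for each pair $(b,c)$ one gets $A^a_{bc}\nabla u^b\cdot\nabla u^c$ expressed as $\sum_\ell \langle \text{(something built from } I_\ell du)\,, \partial_{\mathscr{j}_\ell} u\rangle$-type terms; grouping the derivatives one sees each such term is of the form $\sum_{p,q}\partial_{x_p}P^{pq}\,\partial_{x_q}u^a$ where the matrix $(P^{pq})$ built from $\mathscr{j}_\ell$ and the target structures is divergence-free in the first index — because the $\mathscr{j}_\ell$ are constant and skew, $\sum_p\partial_{x_p}(\mathscr{j}_\ell)^{pq}=0$ and the antisymmetry produces the cancellation — so it is a genuine jacobian $\nabla^\perp\cdot$ paired with $\nabla$. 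Then $\|\partial_p P^{pq}\,\partial_q u^a\|_{\mathscr{h}^1}\lesssim \|\nabla P\|_{L^2}\|\nabla u\|_{L^2}\lesssim \int|\nabla u|^2$ by \cite{CLMS} (local version of the $\mathscr{h}^1$ estimate; here the target structures $I_\ell(u)$ are bounded and in $W^{1,2}$ via the chain rule, which is what \cite{CLMS} needs). The almost hyper-Hermitian error terms coming from $|i-\mathscr{j}_\ell|\le\epsilon|x|$ and $|\nabla(\text{metric})|$ are quadratic in $\nabla u$ with an extra factor $\epsilon$ or a smooth coefficient, hence lie in $L^2$ and again contribute to $\mathscr{h}^1$ with the energy bound; the closedness of $\Om_I,\Om_J,\Om_K$ (not needed in the domain) is what guarantees $I,J,K$ parallel so that no bad first-derivative terms of the structures appear.

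I expect the main obstacle to be the bookkeeping that makes the div-curl/jacobian structure genuinely manifest: one must choose the right grouping of the nine (or $3\times$) terms produced by substituting (\ref{eq:triholo}) so that, after using skew-adjointness of $I,J,K$ and constancy of the $\mathscr{j}_\ell$, the ``divergence in the first slot'' vanishes identically rather than merely up to lower order — this is the analogue of Hélein's moving-frame cancellation $\mathrm{div}(\nabla u^b\, u^c - u^b\,\nabla u^c)$ but here it is forced by the hypercomplex algebra on the domain together with the Kähler identities $d\Om_I=d\Om_J=d\Om_K=0$ on the target, rather than by an antisymmetrization of indices. A secondary technical point is to handle the localization cleanly: multiplying by a cutoff $\chi$ introduces commutator terms $[\Delta,\chi]u$ which are lower order (products of $\nabla\chi$ and $\nabla u$, or $\nabla^2\chi$ and $u$) and hence in $L^2$, so that passing from the local Hardy space $\mathscr{h}^1$ estimate on $\mathbb{H}^m$ to $\Uc\subseteq\Mc$ only costs constants depending on the geometry; one should also note that $\mathscr{h}^1$ (inhomogeneous) rather than $\mathscr{H}^1$ is the right space precisely because these lower-order $L^2$ contributions — unlike in the flat sphere-target case of Hélein — are unavoidable here.
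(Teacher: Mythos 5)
Your plan takes a genuinely different route from the paper, and along the way it misidentifies the algebraic mechanism on which the result rests. The paper does \emph{not} pass through the second-order system $\Delta_g u + A(\nabla u,\nabla u)=f$: that equation requires the approximability hypothesis (\ref{eq:strongapproximability}) and plays no role in Proposition~\ref{Prop:JacobianStructure}, which assumes only (\ref{eq:triholo}). Instead one extends $I,J,K$ off $\Nc$ to $\R^Q$, writes (\ref{eq:triholo}) in coordinates as (\ref{eq:triholoincoords}), and then applies the divergence form of $\Delta_g$ directly to $\p_\ell u^\beta = I^\beta_\alpha i_\ell^s\,\p_s u^\alpha + \cdots$. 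The second fundamental form $A$, and any attempt to rewrite $A(\nabla u,\nabla u)$ in div-curl form, never enters. This is not just a shortcut: starting from (\ref{eq:2ndorderPDE}) makes the proposition depend on hypotheses it does not state.

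The cancellation you invoke — ``because the $\mathscr{j}_\ell$ are constant and skew, $\sum_p \p_{x_p}(\mathscr{j}_\ell)^{pq}=0$'' together with ``closedness of $\Om_I,\Om_J,\Om_K$ guarantees $I,J,K$ parallel so that no bad first-derivative terms of the structures appear'' — is not what makes the argument work, and it would not survive the almost hyper-Hermitian setting of the proposition, where $i,j,k$ are genuinely non-parallel. The identity that both annihilates the second-order terms $g^{a\ell}i_\ell^s\,\p_a\p_s u^\alpha$ and turns the remaining first-order terms into jacobians $\p_a\psi\,\p_s\phi-\p_s\psi\,\p_a\phi$ is the skew-symmetry of $g^{a\ell}i_\ell^s$ in $(a,s)$, which is exactly the statement that $\om_i$ (and $\om_j,\om_k$) is a $2$-form; this holds pointwise with no constancy of $i$ and no parallelness of anything. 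Likewise the target structures only enter through the composed functions $I^\beta_\alpha\circ u$, $J^\beta_\alpha\circ u$, $K^\beta_\alpha\circ u$, which are in $W^{1,2}$ by the chain rule simply because $u\in W^{1,2}$ and the extended coefficients are smooth bounded functions on $\R^Q$; no Kähler identity on $\Nc$ is used. As a consequence the jacobian structure in (\ref{eq:withskewsymmetry}) is exact, not approximate, so there are no $\epsilon|x|$-error terms to estimate and no need for the perturbative bookkeeping you anticipate as the ``main obstacle.''
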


\begin{proof}[\textbf{proof of Proposition \ref{Prop:JacobianStructure}}]
It suffices to prove the result on a chart $\mathcal{U} \subset \Mc$. For any $y \in \Nc \subset \R^Q$ we can extend the almost complex structure $I$ (defined as a linear automorphism on $T_y \Nc$) to a linear automorphism $I_{ext}$ of $T_y \R^Q \cong \R^Q$. For that it suffices to decompose $T_y \R^Q = T_y \Nc \oplus (T_y \Nc)^\bot$ and define the new automorphism $I_{ext}$ to agree with $I$ on $T_y \Nc$ whilst $I_{ext}=0$ on $(T_y \Nc)^\bot$. In the same way we can extend $J$ and $K$ to $J_{ext}$ and $K_{ext}$ respectively. Introducing local coordinates $x^\ell$ on $\Mc$ for $\ell \in \{1, ... 4m\}$ and standard coordinates $y^\alpha$ on $\R^Q$ for $\alpha \in \{1, ... Q\}$. \footnote{We will be using greek indexes on the target and latin indexes on the domain, avoiding in the latter the use of the indexes $i,j,k$ since these letters denote the almost complex structures on $\Mc$.}  With these coordinates in mind the almost complex structures on $\Mc$ and the extended ones in the target will be expressed through coefficients as follows (we write it for $j$ and $J_{ext}$ only, the same notation will be used for $i,k,I_{ext}, K_{ext}$)

$$j\left(\frac{\p}{\p x^\ell}\right) = j_\ell^s \frac{\p}{\p x^s} \, \text{ for } \ell, s \in \{1, ...4m\} $$
\begin{equation}
\label{eq:Jcoefficients}
J_{ext}\left(\frac{\p}{\p y^\alpha}\right) = J_\alpha^\beta \frac{\p}{\p y^\beta} \, \text{ for } \alpha, \beta \in \{1, ...Q\},
\end{equation}
with implicit summation over repeated indexes. Remark that the extended automorphisms are not almost complex structures in $T_y \R^Q$ as they are degenerate in the directions normal to $T\Nc$.

Since on the r.h.s. of (\ref{eq:triholo}) we have $I,J,K$ acting on vectors that are tangent to $\Nc$ we can substitute $I_{ext}$, $J_{ext}$, $K_{ext}$. Therefore the map $u$, as a map $u=(u^1, ..., u^Q)$ into $\R^Q$, satisfies the equation $$du = I_{ext} du \, i + J_{ext} du \, j + K_{ext} du \, k.$$ We will now rewrite this equation in the chosen coordinates. Using (\ref{eq:Jcoefficients}) we can rewrite

$$J_{ext} du \, j \left(\frac{\p}{\p x^\ell} \right)=J_\alpha^\beta j_\ell^s \frac{\p u^\alpha}{\p x^s} \frac{\p}{\p y^\beta} $$
and from here we have the triholomorphic map equation in the form

\begin{equation}
 \label{eq:triholoincoords}
 \frac{\p u^\beta}{\p x^\ell} = I_\alpha^\beta i_\ell^s \frac{\p u^\alpha}{\p x^s} + J_\alpha^\beta j_\ell^s \frac{\p u^\alpha}{\p x^s} + K_\alpha^\beta k_\ell^s \frac{\p u^\alpha}{\p x^s}.
\end{equation}

We will denote, as it is customary, by $g_{ab}$ the metric tensor $g$ in local coordinates and by $g^{ab}$ its inverse. The Beltrami-Laplace operator $div(\nabla)$ on $(\Mc, g)$ is expressed in local coordinates by (always summing over repeated indexes)

$$\Delta_g = \frac{1}{\sqrt{|g|}} \frac{\p}{\p x^a} \left( \sqrt{|g|} g^{a \ell} \frac{\p}{\p x^\ell}\right),$$
where $\sqrt{|g|} dx^1 \wedge ... \wedge dx^{4m}$ is the volume form on  $\Mc$. Applying this operator to $u^\beta$ (for a fixed $\beta \in \{1, ... Q\}$) and using (\ref{eq:triholoincoords}) we find

$$\Delta_g u^\beta= \frac{1}{\sqrt{g}} \frac{\p}{\p x^a} \left( \sqrt{g} g^{a \ell}  I_\alpha^\beta i_\ell^s \frac{\p u^\alpha}{\p x^s} +\sqrt{g} g^{a \ell} J_\alpha^\beta j_\ell^s \frac{\p u^\alpha}{\p x^s} + \sqrt{g} g^{a \ell} K_\alpha^\beta k_\ell^s \frac{\p u^\alpha}{\p x^s}\right) =$$

\begin{equation}
\label{eq:withoutskewsymmetry}
 = \frac{1}{\sqrt{|g|}} \frac{\p}{\p x^a} \left( \sqrt{|g|} g^{a \ell}  j_\ell^s  J_\alpha^\beta \right)\frac{\p u^\alpha}{\p x^s} + g^{a \ell}  j_\ell^s  J_\alpha^\beta \frac{\p^2 u^\alpha}{\p x^a\p x^s} +
\end{equation}
$$+\frac{1}{\sqrt{|g|}} \frac{\p}{\p x^a} \left( \sqrt{|g|} g^{a \ell}  i_\ell^s  I_\alpha^\beta \right)\frac{\p u^\alpha}{\p x^s} + g^{a \ell}  i_\ell^s  I_\alpha^\beta \frac{\p^2 u^\alpha}{\p x^a\p x^s} + $$ 
$$+ \frac{1}{\sqrt{|g|}} \frac{\p}{\p x^a} \left( \sqrt{|g|} g^{a \ell}  k_\ell^s  K_\alpha^\beta \right)\frac{\p u^\alpha}{\p x^s} + g^{a \ell}  k_\ell^s  K_\alpha^\beta \frac{\p^2 u^\alpha}{\p x^a\p x^s}.$$
The summation over $\ell=1,..., 4m$ in the last expression only involves products of the type $g^{a \ell}  i_\ell^s$, $g^{a \ell}  j_\ell^s$, $g^{a \ell}  k_\ell^s$. Recalling the definition of $\om_i=g^{-1}i$ (the non-degenerate two-form uniquely associated to the metric $g$ and the almost complex structure $i$) we can see that for each $(a,s)$ the term $g^{a \ell}  i_\ell^s$ is the coefficient of $\om_i$ corresponding to $dx^a \otimes dx^s$ when we express $\om_i$ in the local coordinates, i.e. $g^{a \ell}  i_\ell^s = \om_i\left(\frac{\p}{\p x^a} \otimes \frac{\p}{\p x^s} \right)$. The skew-symmetry of $\om_i$ implies that $g^{a \ell}  i_\ell^s = - g^{s \ell}  i_\ell^a$. The same goes for the sum of products $g^{a \ell}  j_\ell^s$, $g^{a \ell}  k_\ell^s$. 

Using the skew-symmetry w.r.t. $(a,s)$ of $g^{a \ell}  i_\ell^s$, $g^{a \ell}  j_\ell^s$, $g^{a \ell}  k_\ell^s$ and the symmetry in $(a,s)$ of $\frac{\p^2 u^\alpha}{\p x^a\p x^s}$ we can see that, summing over all $(a,s)$, the second, fourth and sixth terms of (\ref{eq:withoutskewsymmetry}) vanish.
Using the skew symmetry  of $g^{a \ell}  i_\ell^s$, $g^{a \ell}  j_\ell^s$, $g^{a \ell}  k_\ell^s$ again we can rewrite the first, third and fifth terms of of (\ref{eq:withoutskewsymmetry}) summing on $(a,s)$ with $s < a$ only. These observations lead to

\begin{equation}
 \label{eq:withskewsymmetry}
 \Delta_g u^\beta=  \frac{1}{\sqrt{|g|}} \sum_{s<a} \left( \frac{\p}{\p x^a} \left( \sqrt{|g|} g^{a \ell}  i_\ell^s  I_\alpha^\beta \right)\frac{\p u^\alpha}{\p x^s} -  \frac{\p}{\p x^s} \left( \sqrt{|g|} g^{a \ell}  i_\ell^s  I_\alpha^\beta \right)\frac{\p u^\alpha}{\p x^a} \right)+
\end{equation}

$$ +\frac{1}{\sqrt{|g|}} \sum_{s<a} \left( \frac{\p}{\p x^a} \left( \sqrt{|g|} g^{a \ell}  j_\ell^s  J_\alpha^\beta \right)\frac{\p u^\alpha}{\p x^s} -  \frac{\p}{\p x^s} \left( \sqrt{|g|} g^{a \ell}  j_\ell^s  J_\alpha^\beta \right)\frac{\p u^\alpha}{\p x^a} \right)+$$

$$+\frac{1}{\sqrt{|g|}} \sum_{s<a} \left( \frac{\p}{\p x^a} \left( \sqrt{|g|} g^{a \ell}  k_\ell^s  K_\alpha^\beta \right)\frac{\p u^\alpha}{\p x^s} -  \frac{\p}{\p x^s} \left( \sqrt{|g|} g^{a \ell}  k_\ell^s  K_\alpha^\beta \right)\frac{\p u^\alpha}{\p x^a} \right).$$
The coefficients $I_\alpha^\beta$, $J_\alpha^\beta$, $K_\alpha^\beta$ are evaluated at $u(x)$. The functions $I_\alpha^\beta$, $J_\alpha^\beta$, $K_\alpha^\beta$ are smooth on $\Nc$ (which is a compact closed submanifold of $\R^Q$), so they can be extended to a tubular neighbourhood first and then to the whole of $\R^Q$ as smooth functions. As $u \in W^{1,2}(\Mc, \R^Q)$, the functions $I_\alpha^\beta \circ u$, $J_\alpha^\beta \circ u$, $K_\alpha^\beta \circ u$, that appear in the r.h.s. of (\ref{eq:withskewsymmetry}), are therefore in $W^{1,2}(\Mc, \R)$. Setting (for each $(a,s)$) $\psi=\sqrt{|g|} g^{a \ell}  i_\ell^s  I_\alpha^\beta$ and $\phi=u^\beta$ we can recognise, in the r.h.s. of (\ref{eq:withskewsymmetry}), a sum of terms of the type $\frac{\p \psi}{\p x^a} \frac{\p \phi}{\p x^s} - \frac{\p \psi}{\p x^s}\frac{\p \phi}{\p x^a} $, for $\psi$ and $\phi$ in $W^{1,2}(\Mc, \R)$. The Jacobian structure of this expression is the crucial assumption to infer, from \cite{CLMS}, that  $\frac{\p \psi}{\p x^a} \frac{\p \phi}{\p x^s} - \frac{\p \psi}{\p x^s}\frac{\p \phi}{\p x^a} $ lies\footnote{One of the versions of the result \cite{CLMS}, that applies to our situation, is the following. We state it in $\R^m$, taking it from \cite{HelW}, Section 4.2. Let $\alpha \in W^{1,2}(\R^m)$ and $\beta$ be a closed (in distributional sense) $(m-1)$-form of $\R^m$ with coefficients in $L^2(\R^m)$; then $d \alpha \wedge \beta = f dx^1 \wedge ... \wedge d x^m$, where $f$ belongs to the Hardy space $\mathcal{H}^1(\R^m)$. From this we see the following: given $\psi$ and $\phi$ in $W^{1,2}(\R^m)$, let us set $\beta= d (\phi dx^3 \wedge .. \wedge d x^m )$ (this is a closed $(m-1)$-form) and $\alpha = \psi$. The explicit expression of $d \alpha \wedge \beta$ is then $\left(\frac{\p \psi}{\p x^1} \frac{\p \phi}{\p x^2} - \frac{\p \psi}{\p x^2}\frac{\p \phi}{\p x^1}\right)dx^1 \wedge ... \wedge dx^m$. We can apply this to our situation for any couple $(a,s)$.} in the Hardy space $\mathscr{h}^1(\Mc)$. Moreover the Hardy norm of $\frac{\p \psi}{\p x^a} \frac{\p \phi}{\p x^s} - \frac{\p \psi}{\p x^s}\frac{\p \phi}{\p x^a} $ is controlled by $\|\nabla \psi\|_{L^{2}} \|\nabla \phi\|_{L^{2}}$.
So we can conclude that (\ref{eq:withskewsymmetry}) is a PDE of the form $\Delta_g u^\beta = f \in \mathscr{h}^1(\Mc)$ with $\|f\|_{\mathscr{h}^1} \leq Const_{(\Mc, \Nc)} E(u)$. 

\end{proof}

A classical result, e.g. \cite{Stein} \cite{RS}, gives that any solution to $\Delta u = f$ with $f$ in the Hardy space $\mathscr{h}^1$ belongs to $W^{2,1}$ and $\|u\|_{W^{2,1}} \leq Const \|f\|_{\mathscr{h}^1}$.  In the next subsection we show that the same holds for our map $u$ by adapting the result for the classical Laplacian to the Laplace-Beltrami operator by a perturbation argument. We will thus establish the $W^{2,1}$-estimate of Theorem \ref{thm:W21estimate}.


\subsection{Hardy space and proof of Theorem \ref{thm:W21estimate}}
\label{LaplaceBeltrami}

Let $\Phi$ be a function in the Schwartz class $\mathcal{S}(\R^{4m})$ of rapidly decaying smooth functions such that $\int \Phi =1$. For $t>0$ denote by $\Phi_t$ the function $\frac{1}{t^{4m}} \Phi(\frac{\cdot}{t})$. The inhomogeneous Hardy space $\mathscr{h}^1$ defined in terms of a maximal function: given $f \in  L^1(\R^{4m})$ the classical Hardy-Littlewood maximal function is $Mf(x)=\sup_B \frac{1}{|B|} \left|\int_B f\right|$, with the supremum taken over all balls centered at $x$. We will consider a modification of this, namely $\sup_{0<t<1} |\Phi_t \ast f|(x)$ (roughly speaking the balls in the classical maximal function cannot get as large as we want - this is a local version of the Hardy-Littlewood maximal function). The inhomogeneous Hardy space $\mathscr{h}^1$  is the following subset of $L^1(\R^{4m})$ 

\begin{equation}
 \mathscr{h}^1=\{f\in L^1(\R^{4m}): \sup_{0<t<1} |\Phi_t \ast f| \in L^1(\R^{4m})\}
\end{equation}
endowed with the norm $\displaystyle \|f\|_{\mathscr{h}^1} = \left\| \sup_{0<t<1} |\Phi_t \ast f| \right\|_{L^1(\R^{4m})}$. The particular choice of $\Phi$ does not affect the resulting space and gives an equivalent norm. 

The PDE (\ref{eq:withskewsymmetry}) for a component of $u$ on a geodesic ball $B_r \subset \Mc$ can be rewritten in normal coordinates in the following form (implicitly summing over $i,j$ and dropping the index $\beta$ indicating the component for the rest of this section in order to keep a lighter notation)

$$\Delta u + (\sqrt{|g|} g^{ij}-\delta^{ij}) \p_{ij} u +\p_i(\sqrt{|g|} g^{ij}) \p_ju = \sqrt{|g|} f, $$
for $f \in \mathscr{h}^1$ (set $f=0$ in the complement of $B_r$). Here $\delta^{ij}$ denotes the Kronecker delta, $\Delta$ is the usual Laplacian and $(\sqrt{|g|} g^{ij}-\delta^{ij}) = O(|x|^2)$ and $\p_i(\sqrt{|g|} g^{ij}) = O(|x|)$. Up to a dilation of the (small enough) geodesic ball $B_r \subset \Mc$ we can assume that we are working on a ball $B_R(0)$ with radius $R$ much larger than $1$. We suitably re-define $u$, $g$ and $f$ replacing them by $u\left(\frac{r}{R} \cdot\right)$, $g\left(\frac{r}{R}\cdot\right)$ and $f\left(\frac{r}{R}\cdot\right)$. By abuse of notation we will however keep the notation $u$, $g$ and $f$ on $B_R$. With this in mind we have $(\sqrt{|g|} g^{ij}-\delta^{ij})$ and $\p_i(\sqrt{|g|} g^{ij})$ much smaller than $1$ on $B_R(0)$. 

Consider a smooth bump function $\chi$ compactly supported in $B_R(0)$ and identically equal to $1$ on $B_{R/2}(0)$, with $|\nabla \chi|$ and $|\nabla^2 \chi|$ much smaller than $1$. Extending $u$ by setting it identically $0$ in the complement of $B_R(0)$, the function $\chi u$ is well-defined on the whole of $\R^{4m}$. Compute

$$\Delta(\chi u) = \Delta \chi \, u + \nabla \chi \nabla u + \chi \Delta u = $$ $$(\Delta \chi) \, u + \nabla \chi \cdot \nabla u - \chi {\mu}^{ij} D_{ij} u - \chi \tau^j D_j u + \chi \sqrt{|g|} f,$$
where ${\mu}^{ij}=\sqrt{|g|} g^{ij}-\delta^{ij}$ and $\tau^j=\p_i(\sqrt{|g|} g^{ij})$ are smooth functions of $x \in \R^{4m}$ and small in modulus. The latter PDE is valid on $\R^{4m}$ thanks to the multiplication by $\chi$. It can be studied as a perturbation of the PDE $\Delta (\chi u) = \chi \sqrt{|g|} f,$ thanks to the fact that the remaining terms are small. Recall that if $v$ solves the PDE $\Delta v = \tilde{f}$ with $\tilde{f} \in \mathscr{h}^1$ we have that $v \in W^{2,1}$ and $\|v\|_{W^{2,1}} \leq C \|\tilde{f}\|_{\mathscr{h}^1}$.

Consider the space $F=\{w \in W^{2,1}(\R^{4m}): \nabla^2 w \in \mathscr{h}^1(\R^{4m})\}$ with the norm $\|w\|_{W^{1,1}} + \|\nabla^2 w\|_{\mathscr{h}^1}$. We will find by a fixed point technique a unique solution in $F$ to the PDE

\begin{equation}
\label{eq:fixedpoint}
\left\{ \begin{array}{ccc}
\Delta v =  (\Delta \chi) \, v + \nabla \chi \cdot \nabla v - \chi {\mu}^{ij} D_{ij} v - \chi \tau^j D_j v + \chi \sqrt{|g|} f \\
v(x) \to 0  \text{ as } |x| \to \infty
        \end{array}. \right.
\end{equation}
For this purpose, recall the Calderon-Zygmund operator $CZ: f \rightarrow \int \frac{1}{|x-y|^{4m}} f(y) dy$. This operator is justified by writing the solution to $\Delta v = f$ on $\R^{4m}$ (the unique solution under the condition ``boundary data $0$ at infinity'') as convolution with the Green function $G(y)=\frac{1}{|y|^{4m-2}}$: differentiating $v$ twice yields that $\nabla^2 \Delta^{-1} f$ is controlled by the convolution of $f$ with the  Calderon-Zygmund kernel on $\R^{4m}$, i.e. $\int \frac{1}{|x-y|^{4m}} f(y) dy$. A deep result, see e.g. \cite{RS}, shows that $CZ$ is a bounded operator from $\mathscr{h}^1$ to $\mathscr{h}^1$.

\medskip

For any $w \in F$ we can find a unique solution $v \in F$ (with good estimates for the norm $\|\cdot\|_F$) to the PDE

\begin{equation}
\label{eq:fixedpointequation}
\Delta v =  (\Delta \chi) \, w + \nabla \chi \cdot \nabla w - \chi {\mu}^{ij} D_{ij} w - \chi \tau^j D_j w + \chi \sqrt{|g|} f,
\end{equation}
with boundary condition $v(x) \to 0  \text{ as } |x| \to \infty$. To see this, let us analyse the r.h.s. of (\ref{eq:fixedpointequation}). The function $D_{ij} w$ belongs to $\mathscr{h}^1$ since $w \in F$ and multiplication with a smooth compactly supported function preserves the property\footnote{Let $a \in \mathscr{h}^1$ and $b \in L^\infty$. Then $\left(\Phi_t \ast (ab)\right)(x) = \int \Phi_t(x-y) a(y) b(y) dy$ and so $|\Phi_t \ast (ab)|(x) \leq \|b\|_\infty |\Phi_t \ast a|(x)$, whence $\|ab\|_{\mathscr{h}^1} \leq \|b\|_\infty \|a\|_{\mathscr{h}^1}$.} of being in $\mathscr{h}^1$. The same goes for $\chi \sqrt{|g|} f$. The functions $w$ and $\nabla w$ are respectively in $W^{2 ,1}$ and in $W^{1,1}$, that are subspaces of $\mathscr{h}^1$. Thus all the terms on the r.h.s. are in $\mathscr{h}^1$ and so is their sum. Writing equation (\ref{eq:fixedpointequation}) as $\Delta v = \tilde{f}$ we have seen that $\tilde{f} \in \mathscr{h}^1$ and the unique solution is given by $\int \frac{1}{|x-y|^{4m-2}} \tilde{f}(y) dy$. In addition to $\|v\|_{W^{2,1}} \leq C \|\tilde{f}\|_{\mathscr{h}^1}$ we also have 

$$\|\nabla^2  v\|_{\mathscr{h}^1} \leq \|CZ(\tilde{f})\|_{\mathscr{h}^1} \leq Const \, \|\tilde{f}\|_{\mathscr{h}^1}.$$
Let us prove that the map $w \rightarrow Tw =v$ just constructed is a contraction from $F$ into itself. Given $w_1, w_2 \in F$, the difference $v_1 - v_2 := Tw_1 - T w_2$ solves

$$\Delta (v_1 - v_2) =  (\Delta \chi) \, (w_1 - w_2) + \nabla \chi \cdot \nabla (w_1 - w_2) - \chi {\mu}^{ij} D_{ij} (w_1 - w_2) - \chi \tau^j D_j (w_1 - w_2)  $$
and by the estimates recalled above we can see that $\|Tw_1 - T w_2\|_F $ is bounded by the Hardy-norm of the r.h.s. Recalling the smallness assumptions on the moduli of $\nabla^2 \chi$, $\nabla\chi$, $\tau^j$ and $\mu^{ij}$ we find that 

$$\|Tw_1 - T w_2\|_F \leq \epsilon \|w_1 - w_2\|_F$$
for some $\epsilon <1$, i.e. $T$ is a contraction. The Banach-Caccioppoli fixed point theorem yields a unique fixed point of $T$ in $F$, in other words a solution $v \in F$ of the PDE (\ref{eq:fixedpoint}). We also have $\|v\|_F \leq \epsilon \|v\|_F + const \|\chi \sqrt{|g|} f\|_{\mathscr{h}^1}$, so absorbing the first term on the r.h.s.

$$\|v\|_F \leq  Const \|\chi \sqrt{|g|} f\|_{\mathscr{h}^1}.$$

\medskip

Equation (\ref{eq:fixedpoint}) reduces on $B_{R/2}$ to
$$\Delta v + (\sqrt{|g|} g^{ij}-\delta^{ij}) \p_{ij} v +\p_i(\sqrt{|g|} g^{ij}) \p_jv = \sqrt{|g|} f , $$
since $\chi$ is identically $1$ there. So $v$ and $u$ satisfy the same PDE on $B_{R/2}$. Their difference thus solves

$$\p_i\left(\sqrt{|g|}g^{ij} \p _j (u-v)\right) =0 \text{ on } B_{R/2}. $$
So by standard $L^p$-theory for divergence type elliptic operators $u-v$ is smooth and we have $\|u-v\|_{W^{2,p}(B_{R/4})} \leq Const_R \|u-v\|_{L^p(B_{R/4})}$. Recall that $u$ is $L^\infty$ to start with, since it was a coordinate function for a map taking values in the compact submanifold $\Nc \subset \R^N$. Choosing $p=\frac{4m}{4m-2}>1$ (by the Sobolev embedding $v \in W^{2,1}(\R^{4m})$ which embeds in $L^p$ and $\|v\|_{L^p} \leq const \|v\|_{ W^{2,1}}$) and recalling that the $W^{2,1}$-norm is controlled from above by the $F$-norm, we have $$\|u-v\|_{W^{2,p}(B_{R/4})} \leq Const_R \left(\|u\|_{L^p(B_{R/4})} + \|v\|_{L^p(B_{R/4})}\right) \leq Const_R \left(\|u\|_{L^p(B_{R/4})} + \|v\|_{W^{2,1}}\right) $$ $$ \leq Const_R \left(\|u\|_{L^p(B_{R/4})} + \left\|\chi \sqrt{|g|} f\right\|_{\mathscr{h}^1}\right)\leq Const_R \left(\|u\|_\infty + \|f\|_{\mathscr{h}^1}\right),$$
from which it follows that

$$\|u\|_{W^{2,1}(B_{R/4})} \leq  \|v\|_F + \|u-v\|_{W^{2,p}(B_{R/4})} \leq Const_R \left(\|u\|_\infty + \|f\|_{\mathscr{h}^1}\right).$$
The geodesic balls $B_r$ in the beginning must be chosen so that $B_{r/4}$ form a finite cover of $\Mc$ and $g^{ij}-\delta^{ij}$ is uniformly small in the $C^2$-norm (this is possible since $\Mc$ is compact and we have smooth data on it). Then since all the radii $r$ are uniformly bounded away from $0$ and $\infty$ the dilations that send $B_r$ to $B_R$ produce a fixed multiplicative constant when the final estimate is rewritten in the original ball $B_r$. We can then add up these estimates as the geodesic ball ranges over the finite cover.

\medskip

Repeating argument above for every component of the map $u: \Mc \to (\Nc \hookrightarrow \R^Q)$ we have the desired result

$$u \in W^{2,1}(\Mc) \text{ and } \|u\|_{W^{2,1}(\Mc)} \leq Const_{(\Mc, g)} \left(Const_{(\Nc \hookrightarrow \R^Q)} + E(u)\right),$$
where $E(u)$ denotes the Dirichlet energy of $u$. Remark that we have also shown that $D^{ij}u \in \mathscr{h}^1$ with norm bounded by $\left(Const_{(\Nc \hookrightarrow \R^Q)} + E(u)\right)$, therefore it also holds $\Delta u \in \mathscr{h}^1$ (with $\Delta$ rather than $\Delta_g$) with the same estimate on $\| \Delta u\|_{\mathscr{h}^1}$.

\subsection{Consequences: $\eps$-regularity.}
\label{epsreg}

The PDE $\Delta u \in \mathscr{h}^1$ leads to Morrey decay estimates showing an $\eps$-regularity result for a triholomorphic map $u \in W^{1,2}(\Mc, \Nc)$ satisfying (\ref{eq:strongapproximability}), i.e. we get the same results that are well-known in the case of stationary harmonic maps, although our triholomorphic maps are only almost-stationary. The proof exploits the duality $bmo - \mathscr{h}^1$, just as in the fundamental works on stationary harmonic maps \cite{Evans} and \cite{Beth}. Due to the importance of the result for the subsequent development of the paper and for the sake of clarity and completeness we give in this section the explicit argument for the key lemma in this regularity result. 

The space of ``local bounded mean oscillation'' is defined as 

\begin{equation}
 \label{eq:defbmo}
bmo=\left\{f \in L^1_{\text{loc}}: \|f\|_{bmo}= \sup \limits_{|Q|\leq1}\frac{1}{|Q|} \int\limits_{Q}|f(x)-f_Q|+\sup \limits_{|Q|\geq1}\frac{1}{|Q|} \int\limits_{Q}|f(x)|<\infty \right\},
\end{equation}
where $f_Q$ denotes the average on a cube $Q$, $f_Q=\frac{1}{|Q|}\int\limits_{Q} f(x)$. The space $bmo$ is the dual of $\mathscr{h}^1$, see \cite[2.1.2]{RS}  and \cite[2.3.5]{Triebel}  (\cite{FefSt} for the homogeneous spaces $\mathcal{H}^1$ and $BMO$) with duality pairing 

$$(f,g)=\int f g \text{ for } f\in bmo \text{ and } g\in \mathscr{h}^1, \,\,\,\text{ and }\left|\int f g \right| \leq \|f\|_{bmo} \|g\|_{\mathscr{h}^1}.$$

We can then obtain Morrey's Dirichlet growth lemma under a small energy assumption.

\begin{lem}
\label{lem:epsregularitydecay}
There exists $\eps_0>0$, $\tau>0$ and $r_0>0$ depending only on the geometric data on $\Mc$ and $\Nc$ such that whenever $u \in W^{1,2}(\Mc, \Nc)$ satisfies a.e. the triholomorphic equation (\ref{eq:triholo}) and the local strong approximability condition and there exists $B_r(x)$ such that $r\leq r_0$ and $\frac{1}{r^{4m-2}}\int_{B_r(x)}|\nabla u|^2 < \eps_0$, then it holds

$$\frac{1}{(\tau r)^{4m-2}}\int_{B_{\tau r}(x)}|\nabla u|^2 <\frac{1}{2} \frac{1}{r^{4m-2}}\int_{B_r(x)}|\nabla u|^2 .$$
\end{lem}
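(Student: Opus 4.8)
\textbf{Proof strategy for Lemma \ref{lem:epsregularitydecay}.}

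The plan is to follow the Evans--Bethuel scheme, decomposing the gradient of $u$ on a ball into a harmonic part and a remainder, and estimating the remainder using the $bmo$--$\mathscr{h}^1$ duality together with the crucial structural fact $\Delta u \in \mathscr{h}^1$ with $\|\Delta u\|_{\mathscr{h}^1(B_r)} \leq C \int_{B_r}|\nabla u|^2$ established in Proposition \ref{Prop:JacobianStructure} and the subsequent subsection. Concretely, after rescaling to $B_1(x)$ (absorbing factors of $r$, using that the monotonicity formula of Proposition \ref{Prop:monotonicity} controls the rescaled energy), I would fix a radius $\rho \in (1/2,1)$ on whose boundary sphere the energy $\int_{\partial B_\rho}|\nabla u|^2$ is controlled by the bulk energy, and split $u = h + w$ on $B_\rho$, where $h$ is harmonic (for the flat Laplacian) with $h = u$ on $\partial B_\rho$ and $w$ solves $\Delta w = \Delta u$ with zero boundary values. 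The harmonic part $h$ automatically satisfies the decay $\int_{B_{\tau}}|\nabla h|^2 \leq C\tau^{4m}\int_{B_\rho}|\nabla h|^2$ (interior gradient estimates for harmonic functions; the power is $4m$, better than the Morrey power $4m-2$), so the entire issue is to show $\int_{B_\rho}|\nabla w|^2$ is bounded by $\eps_0$ times something comparable, i.e. small relative to the energy.

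The estimate on $w$ is where the Hardy space enters. We have $\int_{B_\rho}|\nabla w|^2 = -\int_{B_\rho} w \, \Delta w = -\int_{B_\rho} w\, \Delta u$, and by $bmo$--$\mathscr{h}^1$ duality this is bounded by $\|w\|_{bmo}\,\|\Delta u\|_{\mathscr{h}^1}$ (after extending $\Delta u$ by zero outside $B_\rho$, which is legitimate since the Hardy estimate is on $\Uc = B_r$). The Hardy norm is $\leq C\int_{B_\rho}|\nabla u|^2$ by Proposition \ref{Prop:JacobianStructure}. For the $bmo$ norm of $w$: since $w$ has $W^{2,1}$-regularity and vanishes on the boundary, $\nabla w$ lies in $L^{4m/(4m-1)}$-type spaces, but more to the point one bounds $\|w\|_{bmo} \leq C\|\nabla w\|_{L^{4m}}$ or argues via the Poincar\'e--Sobolev inequality on balls that the mean oscillation of $w$ is controlled by $\|\nabla w\|_{L^2(B_\rho)}$ (on balls of bounded size, using $w \in W^{1,2}_0$). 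Thus $\int_{B_\rho}|\nabla w|^2 \leq C\|\nabla w\|_{L^2}\int_{B_\rho}|\nabla u|^2$, and by the energy identity $\|\nabla w\|_{L^2(B_\rho)}^2 \leq \|\nabla u\|_{L^2(B_\rho)}^2 \leq C\eps_0$, whence $\int_{B_\rho}|\nabla w|^2 \leq C\sqrt{\eps_0}\int_{B_\rho}|\nabla u|^2$. Combining, for $\tau$ small (so that $C\tau^2 < 1/4$) and then $\eps_0$ small (so that the $w$-contribution is $< 1/4$), we get $\int_{B_\tau}|\nabla u|^2 \leq 2(\int_{B_\tau}|\nabla h|^2 + \int_{B_\tau}|\nabla w|^2) < \tfrac12 \int_{B_1}|\nabla u|^2$, which upon undoing the rescaling is the claim.

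The main obstacle I anticipate is the careful handling of the $bmo$ norm of $w$ on a \emph{bounded} domain: the duality $\left|\int fg\right| \leq \|f\|_{bmo}\|g\|_{\mathscr{h}^1}$ is stated on $\R^{4m}$, so one must extend $w$ (or rather the pairing) appropriately, and the passage from the $W^{2,1}$/Poincar\'e estimate to a genuine $bmo$ bound requires that the averages over large cubes are also controlled — this is where the \emph{inhomogeneous} ($bmo$, not $BMO$) formulation matters and where compact support / the bounded radii play a role. A secondary technical point is the good-slice choice of $\rho$: one uses Fubini to pick $\rho \in (1/2,1)$ with $\int_{\partial B_\rho}|\nabla u|^2 \leq 2\int_{B_1}|\nabla u|^2$, which is needed to control $\|\nabla h\|_{L^2(B_\rho)}$ by the harmonic extension estimate $\int_{B_\rho}|\nabla h|^2 \leq C\int_{\partial B_\rho}|\nabla u|^2$ (trace + harmonic extension), though in fact the simpler bound $\int_{B_\rho}|\nabla h|^2 \leq \int_{B_\rho}|\nabla u|^2$ via minimality of the harmonic extension already suffices and avoids the slicing. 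Everything else is routine scaling bookkeeping, which I would not carry out in detail here.
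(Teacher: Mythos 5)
Your proposal follows the direct Evans--Bethuel scheme (Hodge decomposition $u = h + w$ on a ball, harmonic decay for $h$, $bmo$--$\mathscr{h}^1$ duality for $w$), whereas the paper's proof is a compactness/contradiction argument: assume decay fails along a sequence $u_q$, rescale to $v_q$ with unit energy, show $v_q \rightharpoonup v$ harmonic (good decay), and then upgrade to strong $L^2$ convergence of $\nabla v_q$ via $bmo$--$\mathscr{h}^1$ duality to reach a contradiction. Both routes pivot on $\Delta u\in\mathscr{h}^1$ with norm controlled by the energy, so the philosophy is shared, but the decomposition and the object whose $bmo$ norm is controlled are different.

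There is, however, a genuine gap in the crucial $bmo$ step of your $w$-estimate. You claim that the Poincar\'e inequality on bounded balls, together with $w\in W^{1,2}_0(B_\rho)$, gives $\|w\|_{bmo}\lesssim\|\nabla w\|_{L^2(B_\rho)}$. This is false in dimension $4m>2$: on a small ball $B_r(z)$, Poincar\'e--H\"older gives
\begin{equation*}
\frac{1}{|B_r|}\int_{B_r}|w-w_{B_r}|\;\leq\;\Bigl(\frac{C}{r^{4m-2}}\int_{B_r}|\nabla w|^2\Bigr)^{1/2}\;\leq\;C\,r^{1-2m}\,\|\nabla w\|_{L^2},
\end{equation*}
and the factor $r^{1-2m}$ blows up as $r\to 0$. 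So the $bmo$ norm is controlled not by $\|\nabla w\|_{L^2}$ but by the Morrey--Campanato quantity $\sup_{z,r}\bigl(r^{2-4m}\int_{B_r(z)}|\nabla w|^2\bigr)^{1/2}$. This is precisely where the almost-monotonicity of the energy ratio must enter: the small-energy hypothesis plus Proposition \ref{Prop:monotonicity} give uniform control of $\frac{1}{r^{4m-2}}\int_{B_r(z)}|\nabla u|^2$ at all smaller scales, and only this yields the $bmo$ bound. In the paper's argument this is exactly the step around (\ref{eq:bmoMorrey}), where the rescaled maps $v_q$ are shown to have equibounded $bmo$-seminorms by pulling the Morrey ratio back to the original $u_q$ and invoking almost-monotonicity. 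Your write-up invokes the monotonicity formula only for the initial rescaling, not for the $bmo$ bound, so as stated the $w$-estimate does not close; it can be repaired by replacing the false Poincar\'e step with the Morrey/monotonicity control and pairing $\Delta u\in\mathscr{h}^1$ against a cutoff version of $u$ itself rather than $w$, as in Evans' original treatment. The alternative $\|w\|_{bmo}\lesssim\|\nabla w\|_{L^{4m}}$ you mention is a true embedding but unusable here since you have no a priori $L^{4m}$ control on $\nabla w$.
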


\begin{proof}
This argument is due to \cite{Evans}, where $\mathbb{S}^N$-valued stationary harmonic maps were studied. The argument goes though in our situation as well, with little modifications, thanks to the similar PDE structure and to the almost monotonicity of the energy ratio. Regarding the latter, remark that (by the compactness of $\Mc$) there exist $R_0>0$ and $\tilde{C}>0$ such that, for any $x \in \Mc$ and whenever $r_1<r_2 \leq R_0$, we have $\frac{1}{(r_1)^{4m-2}}\int\limits_{B_{r_1}(x)}|\nabla u|^2 \leq \frac{(1+\tilde{C}r_2)}{(r_2)^{4m-2}}\int\limits_{B_{r_2}(x)}|\nabla u|^2$. This will be needed in the final stages of the proof.

We can assume that $\tau < \frac{1}{2}$ since we need the result for some small $\tau$. We will argue by contradiction and thus consider a sequence 
$$u_q: B_{r_q}(x_q) \to \Nc \subset \R^Q \text{ with } \frac{1}{r_q^{4m-2}}\int\limits_{B_{r_q}(x_q)}|\nabla u_q|^2 = \lambda_q^2 \to 0 \text{ and } r_q \to 0 \text{ as } q \to \infty\,\, (\lambda_q \neq 0)$$ $$\text{ and } \frac{1}{(\tau r_q)^{4m-2}}\int\limits_{B_{\tau r_q}(x_q)}|\nabla u_q|^2 > \frac{1}{2}\frac{1}{(r_q)^{4m-2}}\int\limits_{B_{r_q}(x_q)}|\nabla u_q|^2 \text{ for all } q.$$
We begin by rescaling each map: denote by $a_q:=(u_q)_{x_q,r_q}:=\frac{1}{|B_{r_q}(x_q)|}\int_{B_{r_q}(x_q)}u_q$ and define

$$v_q: B_{1}(0) \to \R^Q, \,\, v_q(y)=\frac{u_q(x_q + r_q y) - (u_q)_{x_q,r_q}}{\lambda_q}.$$
Then 

\begin{equation}
\label{eq:rescaledv_q}
\int_{B_1(0)}|\nabla v_q|^2 =1 \text{ and } \int_{B_{\tau}(0)}|\nabla v_q|^2 >\frac{1}{2} 
\end{equation}
for every $q$. Remark that the $v_q$'s have zero average, so they have a unform $W^{1,2}$-bound. We can then extract a subsequence (not relabeled) such that 

$$v_q \to v \text{ strongly in } L^2(B_1(0)) \text{ and } \nabla v_q \to \nabla v \text{ weakly in } L^2(B_1(0)).$$
The map $v$ is then harmonic: this can be seen as follows. Consider the PDE (\ref{eq:withskewsymmetry}) for $u_q$, substitute $u_q(x)=\lambda_q v_q(\frac{x}{r_q}-x_q) + a_q$ and rewrite the PDE (changing variable $y=\frac{x}{r_q}-x_q$) on $B_1(0)$ for $v_q$: the l.h.s. becomes

$$\frac{\lambda_q}{r_q^2}\Delta v_q^\beta.$$
Let us see how the r.h.s. transforms. To avoid a heavy notation, remark that the terms that we must analyse are of the form $\p\left(\mu(x)I(u)\right)\p u$ for smooth functions $\mu$ and $I$ (depending only on the geometric structures). Moreover each of these terms comes with an associated term with which it forms a jacobian. After the substitution we get for each such term
$$\frac{\lambda_q}{r_q}(\p \mu)(r_q(y+x_q)) I(\lambda_q v_q(y)) \p v_q (y) + \frac{\lambda_q^2}{r_q^2} \mu(r_q(y+x_q))(\p I)(\lambda_q v_q(y)) \p v_q (y) \p v_q(y),$$
and setting $\tilde{\mu}_q(y)=\mu(r_q y+x_q)$ and multiplying by $\frac{r_q^2}{\lambda_q}$ the latter becomes

$$(\p_y \tilde{\mu}_q)(y) I(\lambda_q v_q(y)) \p_y v_q (y) + \tilde{\mu}_q(y)(\p_x I)(\lambda_q v_q(y)) \p_y v_q (y) \p_y v_q(y),$$
and therefore the PDE for $v_q$ has the schematic form
$$\Delta_g v_q = (\nabla\tilde{\mu}_q) I(\lambda_q v_q) \nabla v_q+\lambda_q \tilde{\mu}_q\left\{\nabla  v_q, \nabla v_q\right\}_I + J\text{-terms} + K\text{-terms}, $$
where $\left\{\nabla  v_q, \nabla v_q\right\}_I$ denotes a jacobian term and the other functions appearing are smooth, bounded and depend on $g$, $i$ $\nabla i$,  $I$, $\nabla I$ (similarly for the remaining $J$ and $K$-terms). Let us look at the first term on the r.h.s. Remark that $\nabla\tilde{\mu}_q \to 0$ in $C^2$ as $q \to \infty$ (recall that $r_q \to 0$) whilst $I(\lambda_q v_q) \nabla v_q$ are bounded in $L^2$ independently of $q$. Let us look at the second term on the r.h.s. Thanks to the jacobian structure it is in the Hardy space $\mathscr{h}^1$ with $\mathscr{h}^1$-norm (and thus $L^1$-norm as well) bounded by $\lambda_q K \int_{B_1(0)}|\nabla v_q|^2 = \lambda_q K$ for a fixed constant $K$. In the weak form the PDE reads, for every $\varphi \in C^\infty_c (B_1(0))$

%

$$\int_{B_1(0)}\langle \nabla v_q, \nabla \varphi \rangle  = \int_{B_1(0)} \varphi O(r_q) \nabla v_q+\lambda_q \int_{B_1(0)} \varphi \tilde{\mu}_q\left\{\nabla  v_q, \nabla v_q\right\}_I+J\text{-terms} + K\text{-terms}$$
and so $\left|\int_{B_1(0)}\langle \nabla v_q, \nabla \varphi \rangle \right| \leq (\lambda_q + r_q)K \|\varphi\|_\infty \to 0$. By the weak convergence $\nabla v_q \to \nabla v$ we conclude

$$\int_{B_1(0)}\langle \nabla v, \nabla \varphi \rangle = 0 \text{ for all } \varphi \in C^\infty_c (B_1(0)),$$
i.e. $v$ is harmonic on $B_1(0)$. In particular by Bochner's formula $|\nabla v|^2$ is subharmonic. Therefore the average of $|\nabla v|^2$ on $B_\tau(0)$ is controlled from above by the average of $|\nabla v|^2$ on $B_{1}(0)$, i.e.

\begin{equation}
\label{eq:estimate_v}
\frac{1}{\tau^{4m-2}}\int_{B_{\tau}(0)}|\nabla v|^2 \leq   \tau^2  \int_{B_{1}(0)}|\nabla v|^2 = \tau^2 \leq \frac{1}{4}.
\end{equation}
The proof will be concluded by showing that the convergence $\nabla v_q \to \nabla v$ is actually strong in $L^2(B_{\frac{1}{2}}(0))$, thereby reaching a contradiction with (\ref{eq:rescaledv_q}). This is the more delicate part of the proof and here the duality $bmo - \mathscr{h}^1$ plays a fundamental role.

Consider $\zeta \in C^\infty_c (B_1(0))$ with $0\leq \zeta \leq 1$ identically $1$ on $B_{\frac{1}{2}}(0)$ and identically $0$ on the complement of $B_{\frac{5}{8}}(0)$. The previous weak equations for $v_q$ and $v$ give, for all test functions $\varphi$,

\begin{equation}
 \label{eq:v_q-v}
\int_{B_1(0)}\langle \nabla v_q - \nabla v, \nabla \varphi \rangle  = \lambda_q \int_{B_1(0)} \varphi\left( \left\{\nabla  v_q, \nabla v_q\right\}_I+\left\{\nabla  v_q, \nabla v_q\right\}_J+\left\{\nabla  v_q, \nabla v_q\right\}_K\right)+
\end{equation}
$$+ \int_{B_1(0)} O(r_q) \, \varphi \nabla v_q. $$
We will now set $\varphi = \zeta^2(v_k -v)$ in order\footnote{This test function is in $W^{1,2}_0(B_1(0)) \cap L^{\infty}(B_1(0)) $ rather than in $C^\infty_c(B_1(0)) $: such test functions can be seen to be valid by a density argument.} to obtain an $L^2$-estimate on $\nabla(v_k -v)$ as follows: the l.h.s. of (\ref{eq:v_q-v}) becomes
$$\int_{B_1(0)}\zeta^2 \left|\nabla (v_k -  v)\right|^2+ 2  \int_{B_1(0)} \zeta (v_k -v) \langle \nabla(v_k -v), \nabla \zeta \rangle \geq $$

$$\geq \int_{B_{\frac{1}{2}}(0)}\left|\nabla (v_q -  v)\right|^2+ o(1), $$
by the strong $L^2$ convergence of $v_q-v$ to $0$. It will thus suffice to prove that the right hand side of (\ref{eq:v_q-v}) goes to $0$, in other words that 

$$\int_{B_1(0)} \zeta^2(v_q -v) \left(\left\{\nabla  v_q, \nabla v_q\right\}_I+\left\{\nabla  v_q, \nabla v_q\right\}_J+\left\{\nabla  v_q, \nabla v_q\right\}_K\right) $$
is bounded independently of $q$. We will regard this integral in the spirit of the $bmo - \mathscr{h}^1$ duality. Recall the Hardy space control (independently of $q$)$$\|\left\{\nabla  v_q, \nabla v_q\right\}_I\|_{\mathscr{h}^1(B_1(0))}+\|\left\{\nabla  v_q, \nabla v_q\right\}_J\|_{\mathscr{h}^1(B_1(0))}+\|\left\{\nabla  v_q, \nabla v_q\right\}_K\|_{\mathscr{h}^1(B_1(0))}\leq K;$$ we will thus aim for a uniform $bmo$-control of $\zeta^2(v_q -v)$. For $z \in B_{\frac{7}{8}}(0)$ and $r\leq \frac{1}{8}$ we have by the Poincar\'{e} and H\"older inequalities

\begin{equation}
\label{eq:bmoMorrey}
\frac{1}{|B_r(z)|} \int\limits_{B_r(z)} |v_q - (v_q)_{z,r}| \leq \frac{C r}{r^{4m}} \int\limits_{B_r(z)} |\nabla v_q| \leq \frac{C}{r^{4m-1}} \left(\int\limits_{B_r(z)} |\nabla v_q|^2 \right)^{\frac{1}{2}} r^{2m} =  \left(\frac{C}{r^{4m-2}} \int\limits_{B_r(z)} |\nabla v_q|^2 \right)^{\frac{1}{2}}.
\end{equation}
The Morrey-Campanato norm that appeared on the r.h.s. is a natural quantity for stationary harmonic maps, it is invariant under dilations of the domain and we know that it has good monotonicity properties. Indeed, going back to $u_q$ (from the definition of $v_q$) we have

$$\frac{1}{r^{4m-2}} \int\limits_{B_r(z)} |\nabla v_q|^2= \frac{1}{\lambda_q^2}\frac{1}{(r_q r)^{4m-2}} \int\limits_{B_{r_q r}(x_q + r_q z)} |\nabla u_q|^2 \leq$$
using the almost monotonicity, and then the inclusion $B_{\frac{1}{8}r_q}(x_q + r_q z) \subset B_{r_q}(x_q)$,
$$\leq \frac{1}{\lambda_q^2}\frac{8^{4m-2}(1+\tilde{C}r_q)}{(r_q)^{4m-2}} \int\limits_{B_{\frac{1}{8}r_q}(x_q + r_q z)} |\nabla u_q|^2 \leq \frac{1}{\lambda_q^2}\frac{8^{4m-2}(1+\tilde{C}r_q)}{(r_q)^{4m-2}} \int\limits_{B_{r_q}(x_q)} |\nabla u_q|^2=$$
$$ = 8^{4m-2}(1+\tilde{C}r_q) \leq \tilde{C}_{m,g}.$$
Recalling (\ref{eq:bmoMorrey}) we can see that $v_q$ have equibounded $bmo$-seminorms on $B_{\frac{7}{8}}(0)$ (i.e. the mean oscillation on small balls). In order to complete the control of the $bmo$-norm (\ref{eq:defbmo}) it is enough to recall that the $v_q$'s are bounded in $L^2$ because they have zero average.

Moreover by the John-Nirenberg inequality the $v_q$'s are also equibounded in $L^p(B_{\frac{7}{8}}(0))$ for all $1\leq p <\infty$ (we will need later $p=4m$).
Consider now $\zeta v_q$, we need to control, for $z \in B_{\frac{3}{4}(0)}$ and $r \leq \frac{1}{8}$,

$$\frac{1}{|B_r(z)|} \int\limits_{B_r(z)} |\zeta v_q - (\zeta v_q)_{z,r}| \leq  \frac{1}{|B_r(z)|} \int\limits_{B_r(z)} |\zeta v_q - \zeta (v_q)_{z,r}|+\frac{1}{|B_r(z)|} \int\limits_{B_r(z)} |\zeta (v_q)_{z,r} - (\zeta v_q)_{z,r}|.$$
By the smoothness of $\zeta$ we have $(\zeta v_q)_{z,r}=\zeta(p) (v_q)_{z,r}$ for some $p \in B_r(z)$, and therefore the last term in the previous inequality is bounded by (here $C$ denotes a constant depending only on $\zeta$) $C r \frac{1}{|B_r(z)|} \int\limits_{B_r(z)} | v_q|.$ The first term on the r.h.s. on the other hand is controlled by $ \frac{C}{|B_r(z)|} \int\limits_{B_r(z)} | v_q - (v_q)_{z,r}|$. Putting together and using H\"older inequality

$$\frac{1}{|B_r(z)|} \int\limits_{B_r(z)} |\zeta v_q - (\zeta v_q)_{z,r}| \leq C\tilde{C}_{m,g} + \frac{C}{r^{4m-1}} \left(\int\limits_{B_r(z)} | v_q|^{4m}\right)^{\frac{1}{4m}} r^{4m-1} $$
and these are equibounded in $q$ by the $L^{4m}$-estimate above. The estimate of the $bmo$-seminorm for $z \in \R^{4m} \setminus B_{\frac{3}{4}(0)}$ is trivial since $\zeta$ vanishes on the complement of $B_{\frac{5}{8}(0)}$. We have thus found that the r.h.s. of (\ref{eq:v_q-v}) goes to $0$, i.e. $\nabla v_q \to \nabla v$ strongly in $L^2(B_{\frac{1}{2}}(0)$, thereby providing a contradicion between (\ref{eq:rescaledv_q}) and (\ref{eq:estimate_v}).
\end{proof}
 
Iterating the previous result at smaller and smaller scales and following the standard De Giorgi/Morrey's scheme one shows that whenever the energy density at $x$ is below $\eps_0$ the decay $\frac{1}{r^{4m-2}}\int\limits_{B_r(x)}|\nabla u|^2 \leq C r^\gamma$ holds for some $0<\gamma<1$ , from which the uniform $\frac{\gamma}{2}$-H\"older continuity of $u$ in $B_r(x)$ follows (see \cite{Giaquinta}, \cite{Morrey}, \cite{Helein}). The H\"older continuity can then be bootstrapped to $C^\infty$-regularity. This last step can be achieved by standard elliptic theory just as in the case of weakly harmonic maps. Indeed we will now show that, also in the case of almost harmonicity that we are dealing with, the PDE satisfies suitable assumptions. 

The continuity of $u$ in $B_r(x)$ allows us to localize not only in the domain but also in the target, therefore we can write $u: B_r(x) \to \Nc$ using local charts in $B_r(x) \subset \Mc$ and around $u(x) \in \Nc$ (let us use coordinates respectively $x^\ell$ and $y^\beta$). Then taking a local variation of $u$ in the target amounts to comparing the energy of $u$ with that of $u + t \phi$ for $\phi=(\phi^\beta)_{\beta=1}^{4n} \in C^\infty_c(B_r(x))$. We can compute, as in Section \ref{Energy&Monotonicity} (analogously to (\ref{eq:nulllagrangian}) with equality for $t=0$ and the computations that followed) 

\begin{equation}
 \label{eq:weakequationalmost}
\left.\frac{d}{dt}\right|_{t=0} \int_{B_r(x)} |\nabla (u+t\phi)|^2 =
\end{equation}
$$=\int\limits_{B_r(x)} d((\om_i)^{2m-1}) \wedge \left(u^*(\iota_\phi \Om_I\right)) + d((\om_j)^{2m-1}) \wedge\left(u^*(\iota_\phi \Om_J\right)) +d((\om_k)^{2m-1}) \wedge\left(u^*(\iota_\phi \Om_K\right)). $$
The l.h.s. of (\ref{eq:weakequationalmost}) can be rewritten using the expression of the metrics $g$ and $h$ (respectively on $\Mc$ and $\Nc$) in the local coordinates and leads, with standard computations (see e.g. \cite{Helein} or \cite{Jost}), to the expression

$$\int \nabla^g u^\gamma  \nabla^g\eta_\gamma+ g^{ij} \Gamma_{\alpha, \beta}^\gamma \frac{\p u^\alpha}{\p x^i} \frac{\p u^\beta}{\p x^j} \eta_\gamma.$$
With the r.h.s. of (\ref{eq:weakequationalmost}) we are thus introducing a perturbation term in the weakly harmonic map equation. Let $\Om_I$ be expressed in local coordinates as a sum of terms of form $c_{\mu \nu}(y)dy^\mu \wedge dy^\nu$. Then $u^*\left(\iota_\phi \Om_I\right)$ is a sum of terms of the form $c_{\mu \nu}(u)\phi^\mu \wedge du^\nu$, therefore the perturbation term in the weak harmonic map equation is of the type $f(x,u, \nabla u)$ with smooth dependence on the variables and linear dependence in the first derivatives of $u$: it satisfies therefore all the assumption (A1)-(G2) of \cite[Section 6.2]{Jost}. The smoothness of $u$ thus follows. 

\medskip

We can further obtain the following result. When $u$ is as in Lemma \ref{lem:epsregularitydecay} there exist $\eps_0$ and $r_0$ such\footnote{We can of course take the $\eps_0$ and $r_0$ here to be equal to those from Lemma \ref{lem:epsregularitydecay}.} that on each ball $B_r(x)$ with $r<r_0$ and on which $\frac{1}{r^{4m-2}}\int_{B_r(x)}|\nabla u|^2<\eps_0$ we have that $u$ is smooth on $B_r(x)$ and the following gradient estimate holds:

\begin{equation}
 \label{eq:gradientestimate}
\sup\limits_{y \in B_{\frac{r}{2}}(x)} |\nabla u(y)| \leq \frac{C \sqrt{{\eps}_0}}{r}.
\end{equation}
The proof of this estimate requires a blow up argument along the lines of \cite[Theorem 2.2]{Schoen}. 

\medskip

To conclude we have

\begin{Prop}
\label{Prop:epsregularity}
There exists $\eps_0>0$ depending only on the geometric data on $\Mc$ and $\Nc$ such that whenever $u \in W^{1,2}(\Mc, \Nc)$ satisfies (\ref{eq:triholo}) and (\ref{eq:strongapproximability}) then 

$$\Theta(u,x) < {\eps}_0 \Rightarrow  x \text{ is a regular point of $u$},$$
i.e. there exists $r>0$ such that $u$ is smooth in $B_r(x)$ (and actually $\Theta(u,x) =0$).  Moreover (\ref{eq:gradientestimate}) holds.
\end{Prop}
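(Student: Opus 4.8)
The plan is to combine the Morrey decay estimate of Lemma~\ref{lem:epsregularitydecay} with the almost-monotonicity of the energy ratio (Proposition~\ref{Prop:monotonicity}), and then to feed the resulting H\"older continuity into the standard elliptic bootstrap discussed just before the statement. First I would fix $\eps_0$, $\tau$ and $r_0$ as provided by Lemma~\ref{lem:epsregularitydecay}. Assume $\Theta(u,x)<\eps_0$. By definition of the energy density as the limit of the monotone quantity, together with the ``immediate consequences of almost monotonicity'' (the bound $\frac{1}{r^{4m-2}}\int_{B_r(x)}|\nabla u|^2\geq \Theta(u,x)-Cr$ and upper semicontinuity of $\Theta(u,\cdot)$), we can pick $r_1\leq r_0$ small enough and a neighbourhood of $x$ on which the energy ratio on balls of radius $r_1$ is still $<\eps_0$: indeed $\frac{1}{r^{4m-2}}\int_{B_r(x)}|\nabla u|^2\to\Theta(u,x)$ as $r\downarrow 0$, so for $r_1$ small this is $<\eps_0$, and by upper semicontinuity of the density (or directly by absolute continuity of the integral) the same holds with $x$ replaced by any $y$ in a small ball $B_\rho(x)$.

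Next I would iterate Lemma~\ref{lem:epsregularitydecay}. Starting from a ball $B_{r_1}(y)$ with $y\in B_\rho(x)$ on which the energy ratio is below $\eps_0$, the lemma gives $\frac{1}{(\tau r_1)^{4m-2}}\int_{B_{\tau r_1}(y)}|\nabla u|^2<\frac12\cdot\frac{1}{r_1^{4m-2}}\int_{B_{r_1}(y)}|\nabla u|^2<\eps_0$, so the hypothesis is preserved and we may reapply the lemma on $B_{\tau r_1}(y)$. Inductively $\frac{1}{(\tau^k r_1)^{4m-2}}\int_{B_{\tau^k r_1}(y)}|\nabla u|^2<2^{-k}\eps_0$ for every $k$. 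Interpolating across dyadic-type scales $\tau^{k+1}r_1\leq r\leq\tau^k r_1$ and using the (almost) monotonicity to compare neighbouring scales, this yields a Morrey-growth estimate $\frac{1}{r^{4m-2}}\int_{B_r(y)}|\nabla u|^2\leq C r^{2\gamma}$ with $\gamma=\frac{\log 2}{2\log(1/\tau)}\in(0,1)$, uniformly for $y\in B_\rho(x)$ and $r\leq r_1$. By Morrey's Dirichlet growth theorem (\cite{Giaquinta},\cite{Morrey},\cite{Helein}) this gives that $u$ is $\gamma$-H\"older continuous on $B_{\rho/2}(x)$; in particular $x$ is a continuity point of $u$ and, picking $r>0$ with $B_r(x)\subset B_{\rho/2}(x)$, the energy ratio tends to zero faster than $r^{2\gamma-(4m-2)+(4m-2)}$, i.e.\ $\Theta(u,x)=\lim_{r\to 0}\frac{1}{r^{4m-2}}\int_{B_r(x)}|\nabla u|^2=0$.

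Once $u$ is continuous on $B_r(x)$ I would localise in the target as well and invoke the argument already presented in Section~\ref{epsreg}: writing the Euler--Lagrange condition for local variations in $\Nc$ (display (\ref{eq:weakequationalmost})) in local coordinates around $u(x)$, the equation becomes the weakly-harmonic-map system with a perturbation term $f(x,u,\nabla u)$ that is smooth in its arguments and linear (hence subcritical) in $\nabla u$; this is exactly the setting of assumptions (A1)--(G2) of \cite[Section 6.2]{Jost}, so a continuous $W^{1,2}$ solution is automatically $C^\infty$. This upgrades the H\"older regularity to smoothness on a possibly smaller ball, and the sup-bound on the gradient (\ref{eq:gradientestimate}) follows from the blow-up argument of \cite[Theorem~2.2]{Schoen} applied to the rescaled maps, exactly as indicated in the text preceding the statement. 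I expect the main technical obstacle to be the passage from the discrete-scale decay coming out of Lemma~\ref{lem:epsregularitydecay} to a genuine continuous Morrey estimate that is uniform over a whole neighbourhood of $x$ (not just at $x$ itself): this requires using the almost-monotonicity to propagate the smallness of the energy ratio from $x$ to nearby points and to interpolate between consecutive scales $\tau^{k+1}r_1$ and $\tau^k r_1$, and one has to track that the error terms of size $O(r)$ in the almost-monotonicity do not destroy the geometric decay --- which they do not, precisely because they are lower order. The remaining steps are standard once this uniform decay is in hand.
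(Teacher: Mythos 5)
Your proposal is correct and follows essentially the same route as the paper: iterate Lemma \ref{lem:epsregularitydecay} to obtain the Morrey decay $\frac{1}{r^{4m-2}}\int_{B_r}|\nabla u|^2\leq Cr^\gamma$ uniformly near $x$, deduce H\"older continuity and $\Theta(u,x)=0$, then localise in the target and bootstrap to $C^\infty$ via the perturbed harmonic-map system and \cite[Section 6.2]{Jost}, with (\ref{eq:gradientestimate}) coming from the blow-up argument of \cite[Theorem 2.2]{Schoen}. This is precisely the argument the paper gives in the discussion preceding the Proposition.
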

In view of this, the singular set of $u$, i.e. the closed set

$$\text{Sing}_u:= \{x \in \Mc: \nexists r>0 \text{ such that } u \text{ is smooth in } B_r(x)\},$$
can be characterized as follows

\begin{equation}
 \label{eq:defsing}
 \text{Sing}_u = \{x \in \Mc: \Theta(u,x) \geq {\eps}_0\}.
\end{equation}

\begin{Prop}
\label{Prop:singset}
With the same assumptions as in Proposition \ref{Prop:epsregularity} we have $$\text{Sing}_u \text{ is a closed set and }\,\, {\Hc}^{4m-2}(\text{Sing}_u)=0.$$
\end{Prop}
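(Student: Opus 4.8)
The plan is to follow the classical Federer-style dimension-reduction / covering argument for stationary harmonic maps, adapted to our almost-stationary setting, where all the necessary ingredients have already been established: the $\eps$-regularity statement (Proposition \ref{Prop:epsregularity}), the characterization $\text{Sing}_u = \{x : \Theta(u,x) \geq \eps_0\}$, the existence of the pointwise energy density $\Theta(u,x)$ for \emph{every} $x \in \Mc$, its upper semi-continuity, and the almost-monotonicity formula from Proposition \ref{Prop:monotonicity}. Closedness of $\text{Sing}_u$ is immediate: if $x_j \to x$ with $x_j \in \text{Sing}_u$, then $\Theta(u,x_j) \geq \eps_0$, and upper semi-continuity of $\Theta(u,\cdot)$ gives $\Theta(u,x) \geq \limsup_j \Theta(u,x_j) \geq \eps_0$, so $x \in \text{Sing}_u$. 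The substantive claim is the $\Hc^{4m-2}$-negligibility.

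For the measure estimate I would use a Vitali-type covering argument keyed to the almost-monotonicity formula. Fix a small $\delta>0$ and work at scales $r \leq r_0$. For each $x \in \text{Sing}_u$ we have $\Theta(u,x) \geq \eps_0$, so by almost-monotonicity (in the form $\frac{1}{r^{4m-2}}\int_{B_r(x)}|\nabla u|^2 \geq \Theta(u,x) - Cr$, noted right after Proposition \ref{Prop:monotonicity}) there is $r_x>0$ with $\frac{1}{r^{4m-2}}\int_{B_r(x)}|\nabla u|^2 \geq \frac{\eps_0}{2}$ for all $r \leq r_x$. Cover $\text{Sing}_u$ by balls $\{B_{r}(x)\}$ with $x \in \text{Sing}_u$ and $r \leq \delta$ (and $r\leq r_x$), and extract by the $5r$-covering lemma a countable disjoint subfamily $\{B_{r_j}(x_j)\}$ such that $\{B_{5r_j}(x_j)\}$ covers $\text{Sing}_u$. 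Then
$$\sum_j (5r_j)^{4m-2} \leq 5^{4m-2} \sum_j \frac{2}{\eps_0}\int_{B_{r_j}(x_j)}|\nabla u|^2 \leq \frac{2\cdot 5^{4m-2}}{\eps_0}\int_{N_\delta}|\nabla u|^2,$$
where $N_\delta$ is the $\delta$-neighbourhood of $\text{Sing}_u$. This bounds the $(4m-2)$-dimensional Hausdorff premeasure of $\text{Sing}_u$ at scale $5\delta$ by $\frac{C}{\eps_0}\int_{N_\delta}|\nabla u|^2$, uniformly in $\delta$. This shows first that $\Hc^{4m-2}(\text{Sing}_u)<\infty$, hence $\text{Sing}_u$ is Lebesgue-negligible, hence $|\nabla u|^2 \in L^1$ is absolutely continuous with respect to the restriction: $\int_{N_\delta}|\nabla u|^2 \to 0$ as $\delta \downarrow 0$. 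Letting $\delta \downarrow 0$ in the premeasure bound yields $\Hc^{4m-2}(\text{Sing}_u)=0$.

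The main point to be careful about — and the only place where ``almost'' stationarity enters — is making sure the almost-monotonicity formula of Proposition \ref{Prop:monotonicity} genuinely delivers the one-sided lower bound $\frac{1}{r^{4m-2}}\int_{B_r(x)}|\nabla u|^2 \geq \Theta(u,x) - Cr$ with $C$ and $r_0$ uniform over $\Mc$; this is exactly what is recorded in the paragraph ``Immediate consequences of almost monotonicity'' following the proof of Proposition \ref{Prop:monotonicity}, so it is available. One should also note that nothing beyond $|\nabla u|^2 \in L^1(\Mc)$ and the covering estimate is needed for the absolute-continuity step, and that the compactness of $\Mc$ guarantees the relevant constants are uniform. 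I do not anticipate any real obstacle here: the entire argument is a transcription of the standard proof that $\Hc^{n-2}(\text{Sing}_u)=0$ for stationary harmonic maps (cf.\ \cite{Lin}, \cite{Beth}), and every analytic input it relies on has been set up in the preceding sections.
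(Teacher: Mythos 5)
Your argument is correct and is precisely the standard covering-plus-monotonicity argument that the paper defers to (citing Hélein, Section 3.5), including the bootstrap from finiteness of $\Hc^{4m-2}(\text{Sing}_u)$ to Lebesgue-nullity to vanishing of $\int_{N_\delta}|\nabla u|^2$ and hence to $\Hc^{4m-2}(\text{Sing}_u)=0$. You correctly identify that the only point where ``almost'' stationarity enters is the uniform one-sided bound $\frac{1}{r^{4m-2}}\int_{B_r(x)}|\nabla u|^2 \geq \Theta(u,x) - Cr$, which the paper records immediately after Proposition \ref{Prop:monotonicity}, so no gap remains.
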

The proof is rather standard and involves a covering argument and the monotonicity formula, see e.g. \cite[3.5]{Helein}. (in our case we use the almost monotonicity instead but no change is needed).

\section{Blow up analysis}
\label{blowupanalysis}
Consider the following compactness issue. Let $\{u_\ell\}_{\ell\in \N}$ be a sequence of $W^{1,2}$ triholomorphic maps from $(\Mc, g)$ into $\Nc \subset \R^Q$, with $d(u_\ell^* \alpha)=0$ whenever $\alpha$ is a closed $2$-form on $\Nc$. Assume a uniform energy bound, i.e. there exists $\Lambda>0$ such that $\int_{\Mc} |\nabla u_\ell|^2 \leq \Lambda$ for all $\ell$. Then, up to the extraction of a subsequence, that we do not relabel, we have $u_\ell \rightharpoonup u$ weakly in $W^{1,2}$, for some $u \in W^{1,2}(\Mc, \Nc)$. What can we say about the limiting map $u$?

Recall the almost monotonicity result.
Consider the set
(here $C$ , $\eps_0$ and $r_0$ are as in Proposition \ref{Prop:epsregularity}, Lemma \ref{lem:epsregularitydecay} and Proposition \ref{Prop:monotonicity}, recall that they depend only on the geometric data on $\Mc$ and $\Nc$)

 $$\Sigma:=\bigcap_{0<r<r_0}\bigg\{x \in \Mc \text{ such that} $$

$$\left.\liminf\limits_{\ell \to \infty}\frac{1+(4m-2)r}{r^{4m-2}}\left[\int_{B_r}(\alpha_{1}^{2m-1})\wedge u^* \Om_I +(\alpha_{2}^{2m-1})\wedge u^* \Om_J +(\alpha_{3}^{2m-1})\wedge u^* \Om_K \right]\geq {\eps}_0\right\}=$$
\begin{equation}
 \label{eq:defbubblingset}
 =\bigcap_{0<r<r_0} \left\{ x \in \Mc :\liminf_{\ell\to \infty} \frac{1}{r^{4m-2}}\int\limits_{B_r(x)}|\nabla u_\ell|^2 \geq {\eps}_0 - Cr \right\} .
 \end{equation}
This is called the \textit{blow-up set}, or \textit{bubbling set}: it is the (closed) set where the energy concentrates and 

\textbf{Fact}: away from $\Sigma$ we have locally smooth convergence of $u_\ell$ to $u$.

This can be seen as follows: consider $x \notin \Sigma$, i.e. there exists $0<r<r_0$ such that for a subsequence $u_{\ell_j}$ we have $\frac{1}{r^{4m-2}}\int\limits_{B_r(x)}|\nabla u_{\ell_j}|^2<\eps_0$ and thus $u_{\ell_j}$ are smooth in $B_r(x)$ and satisfy (\ref{eq:gradientestimate}) in $B_{\frac{r}{2}}(x)$, so that we have $C^{1,\alpha}$-convergence of $u_{\ell_j}$ to $u$ in $B_{\frac{r}{2}}(x)$. In particular $u$ is $C^{1,\alpha}$ on $\Mc \setminus \Sigma$. Remark that by the $C^{1,\alpha}$-convergence we can pass the PDE to the limit and $u$ is a triholomorphic smooth map on $B_r(x)$. The local $C^{1,\alpha}$-convergence can then be bootstrapped to $C^k$-convergence for any $k$. Moreover $|\nabla u_\ell|^2 d{\Hc}^{4m}  \rightharpoonup |\nabla u|^2 d{\Hc}^{4m}$ on $\Mc \setminus \Sigma$ as Radon measures. Following a well-known scheme involving a covering argument and the almost monotonicity of the energy ratio (see e.g. \cite[Lemma 1.5]{Lin}) we have 

\textbf{Fact}: $\Sigma$ has locally finite ${\Hc}^{4m-2}$-measure (and locally finite Minkowski content).

\medskip

This is proved by the following local argument. Take $\Sigma \cap \overline{B}_1$, it is compact. Consider $0<\de<\frac{\eps_0}{10 C}$ and take a finite cover of $\Sigma \cap \overline{B}_1$ with balls $B_{\de}(x_j)$ of radius $\de$ and centered at points $x_j \in \Sigma$. By Vitali's covering result we can choose a subfamily of these balls such that $B_{\de}(x_j)$ are disjoint and $B_{3\de}(x_j)$ covers $\Sigma \cap \overline{B}_1$. For all $\ell$ large enough we have that (for every $j$) $\frac{1}{{\de}^{4m-2}}\int_{B_{\de}(x_j)}|\nabla u_\ell|^2 \geq \eps_0 - C \de \geq \frac{\eps_0}{2}$. Therefore 

$$\sum_j {\de}^{4m-2} \leq \frac{2}{{\eps}_0} \int_{\cup_j B_{\de}(x_j)}|\nabla u_\ell|^2 \leq \frac{2}{{\eps}_0} \int_{\Mc}|\nabla u_\ell|^2 \leq \frac{2}{{\eps}_0}  \Lambda,$$
where we used that the $B_{\de}(x_j)$'s are disjoint. Then we have, recalling $\Sigma \cap \overline{B}_1 \subset \cup_j B_{3 \de}(x_j)$, that

$${\Hc}^{4m-2}(\Sigma \cap \overline{B}_1) \leq 3^{4m-2}  \frac{2}{{\eps}_0}  \Lambda.$$
Remark that this result can be refined, in the following way. Since $\cup_j B_{3 \de}(x_j)$ covers $\Sigma \cap \overline{B}_1$, then the open set $\cup_j B_{4 \de}(x_j)$ contains the $\de$-neighbourhood of $\Sigma \cap B_{1}$, i.e. the set $\{y: B_1: \text{dist}(y, \Sigma \cap \overline{B}_{1}) < \delta\}$, therefore this $\de$-neighbourhood has $(4m)$-dimensional measure at most $4^{4m}  \frac{2}{{\eps}_0}  \Lambda \de^2$. In other words the $2$-dimensional \textit{Minkowski content} of $\Sigma \cap \overline{B}_1$ is bounded by $4^{4m}  \frac{2}{{\eps}_0}  \Lambda$.
 
\medskip

By $C^{1,\alpha}$-convergence to $u$ on $\Mc \setminus \Sigma$, which implies the strong $W^{1,2}$-convergence, we also have, for any $2$-form $\alpha$ on $\Nc$, the strong $L^2$-convergence $u_\ell^*(\alpha) \to u^*(\alpha)$ when we restrict to an open set in the complement of $\Sigma$. Therefore $u$ satisfies the strong approximability condition away from $\Sigma$. However this \textit{does not necessarily hold across $\Sigma$}, where the $W^{1,2}$-convergence will generally fail to be strong. We now analyse this aspect further.

Consider the Radon measures $|\nabla u_\ell|^2 d{\Hc}^{4m}$. Without loss of generality we may assume $|\nabla u_\ell|^2 d{\Hc}^{4m}  \rightharpoonup \mu$ weakly as Radon measures. By Fatou's lemma $\mu= |\nabla u|^2 d{\Hc}^{4m}+\nu$ for some non-negative Radon measure $\nu$ on $\Mc$. The previous discussion then gives that $\nu$ is supported inside $\Sigma$ and $\text{Sing}_u$ is a subset of $\Sigma$. On the other hand, whenever $x \in \Sigma$ then we have that for every $0<r<r_0$ there exists $L\in \N$ such that and for every $\ell\geq L$ it holds $\frac{1}{r^{4m-2}}\int\limits_{B_r(x)}|\nabla u_\ell|^2 \geq \frac{{\eps}_0}{2}$ and therefore for a.e. $0<r<r_0$ we must have $\frac{\mu(B_r(x)}{r^{4m-2}}\geq \frac{{\eps}_0}{2}$. If $u$ is smooth at $x$ then $\frac{1}{r^{4m-2}}\int\limits_{B_r(x)}|\nabla u|^2 =O(r^2)$ and therefore for a.e. small $r$ we have $\frac{\nu(B_r(x))}{r^{4m-2}}\geq \frac{{\eps}_0}{4}$, i.e. $x$ is in the support of $\nu$. We conclude therefore 

\textbf{Fact}: $\Sigma = \text{Sing}_u \cup \text{spt}\nu$.

\medskip

Similarly to \cite{Lin} Lemma 1.6 we further have the following facts:

\textbf{(a)} For every $x \in \Mc$ the ratio $\displaystyle \frac{\mu(B_r(x))}{r^{4m-2}}$ is almost monotone in $r$, i.e. for $r\leq r_0$ it is the sum $f(r)+O(rf(r))$ of a non-decreasing function of $r$ and an infinitesimal function. This allows to define the density of $\mu$ for all $x$, $\Theta(\mu,x):=\lim_{r\to 0}\frac{\mu(B_r(x))}{r^{4m-2}}$. This follows by passing to the limit $\int_{B_r(x)}|\nabla u_\ell|^2 \to \mu(B_r(x))$ for a.e. $r<r_0$ and by using the almost monotonicity formula for $u_\ell$.

\textbf{(b)} $ x \in \Sigma \Leftrightarrow \Theta(\mu,x) \geq \eps_0$. Indeed, if $ \Theta(\mu,x) \geq \eps_0$ then for $r<r_0$ we have $\frac{\mu(B_r(x))}{r^{4m-2}} \geq \eps_0 - Cr$ by (a) and therefore $\liminf_{\ell \to \infty}\frac{1}{r^{4m-2}}\int_{B_r(x)}|\nabla u_\ell|^2 \geq \eps_0 - Cr$ by the weak convergence of measures and thus $x \in \Sigma$. On the other hand if $x \in \Sigma$ then for a.e. $r < r_0$ we have $\frac{\mu(B_r(x))}{r^{4m-2}} \geq \eps_0 -Cr$ and sending $r \to 0$ we find that $\Theta(\mu,x) \geq \eps_0$.

\textbf{(c)} For the $W^{1,2}$ function $u$ the set $\left\{x: \limsup_{\rho \to 0} \frac{1}{r^{4m-2}}\int_{B_\rho(x)}|\nabla u|^2>0\right\}=\{x: \Theta_u(x) >0\}$ is a set of zero $\Hc^{4m-2}$-measure, see \cite[2.4.3. and 4.8]{EvansGariepy}. In the complement of it $u$ is approximately continuous (in the sense of Lebesgue points).
Moreover we can pass to the limit, away from $\Sigma$, the second order PDE that we obtained in (\ref{eq:weakequationalmost}) because we have $C^{1,\alpha}$-convergence, i.e. on the complement of $\Sigma$ the limiting map $u$ satisfies the weak formulation of

\begin{equation}
 \label{eq:weakharmonicPDEperturbed}
\Delta_g u + A(\nabla u, \nabla u) = f(x,u,\nabla u) ,
\end{equation}
that is (in a first moment we must take $\phi\in C^\infty_c(\Mc \setminus \Sigma)$)

\begin{equation}
 \label{eq:weakharmonicPDEperturbedweakform}
 \int \nabla u \nabla \varphi+ A(\nabla u, \nabla u) \varphi = \int f(x,u,\nabla u) \varphi\,\,\, \text{ on } \Mc,
\end{equation}
where $A$ denotes the second fundamental form of $\Nc \subset \R^Q$ and $f$ is a smooth function depending only on the geometric structure on $\Mc$ and $\Nc$ and depending linearly on the third variable. Moreover this PDE extends across the $\Hc^{4m-2}$-dimensional set $\Sigma$ by a standard capacity argument, so $u$ satisfies the weak formulation of this second order PDE globally, i.e. we can allow $\phi\in C^\infty_c(\Mc)$ in (\ref{eq:weakharmonicPDEperturbedweakform}). 
At a continuity point of $u$ we can localize it in the target and use the results from \cite{Jost} as we did in (\ref{eq:weakequationalmost}) to infer the smoothness of $u$.

\textbf{Fact}: for $\Hc^{4m-2}$-a.e. $x\in \Sigma$ the map $u$ satisfies $\Theta(u,x)=0$ and it is there approximately continuous. 

\medskip

The three facts (a)-(b)-(c) imply that the density of $\mu$ at points of $\Sigma$ and the density of $\nu$ w.r.t. the measure $\Hc^{4m-2}$ coincide a.e. on $\Sigma$ (in particular the density of $\nu$ exists) and the value of the density is $\geq \eps_0$. From \cite{Preiss} we deduce

\textbf{Fact}: the set $\Sigma$ is $(4m-2)$-rectifiable\footnote{In \cite{Lin} Lin gives a self-contained argument for the rectifiability, without relying on the deep recifiability result of Preiss.}. The measure $\nu$ can be expressed as $\nu= \Theta(x) \Hc^{4m-2}(x)\res \Sigma$ and $\Theta(x) \geq \eps_0$ for a.e. $x$.

\medskip

The limiting map $u$ solves (\ref{eq:triholo}) and (\ref{eq:weakharmonicPDEperturbedweakform}), however the weak $W^{1,2}$-convergence $u_\ell \rightharpoonup u$ is not enough to infer that $d(u^* \alpha)=0$ whenever $\alpha$ is a closed $2$-form on $\Nc$. This causes a \textit{possible lack of almost-stationarity} of $u$, which is a very important and possibly hard issue to analyse\footnote{This was overlooked in \cite{ChenLi1}.}. Focusing on the case in which $\Mc$ is hyperK\"ahler (i.e. the maps $u_\ell$ are stationary harmonic), all we can say about the limit is that the couple $(u, \Sigma)$ is stationary in the sense of \cite{LiTian}\footnote{Roughly speaking, the first variations of $\Sigma$ as a varifold and of $u$ as a map might fail to vanish but they annihilate each other.}, but a priori neither $\Sigma$ nor $u$ need to be. Determining whether they are is an open question, as well as the regularity properties of both $\Sigma$ and $u$.

\section{Quantization of the energy - proof of Theorem \ref{thm:quantization}}
\label{quantization}

In this section we prove Theorem \ref{thm:quantization}. Consider $u_\ell: \Mc \to \Nc$, a sequence of  $W^{1,2}$ triholomorphic maps satisfying (\ref{eq:strongapproximability}) with a uniform energy bound $\int |\nabla u_\ell|^2 \leq \Lambda<\infty$ for all $\ell \in \N$. By the results in the previous section we have $u_\ell \rightharpoonup u$ weakly in $W^{1,2}$, where $u$ is a triholomorphic map, with strong convergence away from a set $\Sigma$ that is $(4m-2)$-rectifiable and has locally finite ${\Hc}^{4m-2}$-measure. The map $u$ is smooth away from the set $\text{Sing}_u \subset \Sigma$, and is approximately continuous away from a ${\Hc}^{4m-2}$-negligeable set. We moreover have

$$|\nabla u_{\ell}|^2 d{\Hc}^{4m} \rightharpoonup |\nabla u|^2 d{\Hc}^{4m} + \Theta(x)d{\Hc}^{4m-2},$$
with $\Theta \geq \eps_0>0$ a.e. on $\Sigma$. The question that we analyse in this section is the following. Is it true that for ${\Hc}^{4m-2}$-a.e. $x \in \Sigma$ we have 

$$\Theta(x)=\sum_{s=1}^{S_x} E(\phi_s),$$ where each $\phi_s:S^2 \to \Nc$ is a (smooth) non-constant harmonic map? This is known as \textit{quantization of the defect measure}. We will answer the question affirmatively. In the case that $\Mc$ and $\Nc$ are hyperK\"ahler the affirmative answer was already given in \cite{Wang} with a different approach. We will give an alternative proof, covering also the the more general case we are analysing, with $\Mc$ almost hyper-Hermitian. Our proof relies on Lorentz space estimates, as in \cite{LR}, exploiting the structure of the PDE that we analysed in Proposition \ref{Prop:JacobianStructure} and the $W^{2,1}$-regularity result from Theorem \ref{thm:W21estimate}. In view of Section \ref{Holomorphicity properties of smooth blow-up sets}, we will go through the arguments quite comprehensively, omitting some details for which we refer to \cite{LR}. 

\medskip

For the purposes of Theorem \ref{thm:quantization} it is enough to restrict our attention to points $x \in \Sigma$ where the approximate tangent $T_x \Sigma$ exists and such that the limit map $u$ is approximately continuous at $x$: by doing so we only neglect a $\Hc^{4m-2}$-negligeable subset of $\Sigma$, which does not affect the result. 
With this in mind, at any such $x$ and for every $\ell$ we can find a suitable dilation $u_{\ell, \lambda_\ell}$ so that (using a diagonal argument) the dilated maps, that we will denote by $\tilde{u}_\ell$, converge weakly in $W^{1,2}$ to the constant $u(x)$ (the value taken at $x$ by the precise representative of $u$) with blow up set $T_x \Sigma$.\footnote{This is a special case of Federer's reduction argument, that is performed at all points in \cite{Lin}; for a general exposition of this argument we refer to \cite{SimonBook}, Appendix A.}
%
We thus have $|\nabla \tilde{u}_{\ell}|^2 d{\Hc}^{4m} \rightharpoonup \nu$ and the measure $\nu$ is a multiple (with factor $\Theta(x)$) of the ${\Hc}^{4m-2}$-dimensional Haussdorf measure restricted to the linear $(4m-2)$-dimensional subspace $T_x \Sigma$.

\medskip

In view of the quantization question that we are addressing, we can therefore work straight away on the $\tilde{u}_\ell$ and prove that $\Theta(x)$ (that is now a constant on $T_x\Sigma$) is a sum of energies of non-constant harmonic spheres. For the sake of notational convenience, we will drop the tilde for the rest of the section and assume that $T_x\Sigma$ is $\R^{4m-2} \times (0,0)$ in suitable coordinates. So we assume that we are dealing with a sequence of strongly approximable $W^{1,2}$ triholomorphic maps $u_\ell: B^{4m-2}_1(0) \times B^{2}_1(0)\to \Nc$ with uniformly bounded energies such that $$u_\ell \rightharpoonup \text{cnst} \,\,\,\text{ weakly in }W^{1,2}(B^{4m-2}_1(0) \times B^{2}_1(0)),$$  
where $cnst$ stands for a constant map and the convergence is strong (even $C^{1,\alpha}$) locally in the complement of $B_1^{4m-2}(0) \times (0,0)$; moreover
\be
\label{eq:assumptionforquantization}
|\nabla u_{\ell}|^2 d{\Hc}^{4m} \rightharpoonup \Theta \,\,d{\Hc}^{4m-2} \res \left( B_1^{4m-2}(0) \times (0,0)\right) \,\,\, \text{ for some }  0<{\eps}_0 \leq  \Theta \in \R.
\ee
From the monotonicity applied to $u_\ell$ at the points $0$ and $\xi_1, \xi_2, ..., \xi_{4m-2}$ (where the latter $(4m-2)$-points span $\R^{4m-2} \times (0,0)$) we have, as in \cite[(2.11)]{Lin}, that 

$$\int\limits_{B^{4m-2}_1(0) \times B^{2}_1(0)}\!\!\! \sum\limits_{a=1}^{4m-2}\left| \frac{\p u_\ell}{\p x_a} \right|^2 \to 0 \,\, \text{ as } \ell \to \infty.$$
With the notation $X_1=(x_1, ..., x_{4m-2})$ and $X_2=(x_{4m-1}, x_{4m})$ we then define

$$f_\ell(X_1) =  \int\limits_{B^{2}_1(0)}\!\!\! \sum\limits_{a=1}^{4m-2}\left| \frac{\p u_\ell}{\p x_a} \right|^2(X_1, X_2) d\,X_2$$
and by Fubini's theorem we then have $f_\ell \to 0$ in $L^1(B^{4m-2}_1(0))$. Recall the Hardy-Littlewood maximal function $Mf_\ell$ of $f_\ell$

$$Mf_\ell(Y):= \sup\limits_{0<r<\frac{1}{2}} \frac{1}{r^{4m-2}}\int\limits_{B_r^{4m-2}(Y)} f_\ell(X_1) d\, X_1 \,\, \text{ for } Y \in B_{\frac{1}{2}}^{4m-2}(0)$$
and the weak-$L^1$ estimate  

$${\Hc}^{4m-2}\left(\{Mf_\ell > \lambda\}\right) \leq \frac{C_m}{\lambda}\|f_\ell\|_{L^1}.$$
So for every $\delta>0$ we can find $A_\de \subset  B_{\frac{1}{2}}^{4m-2}(0)$ with ${\Hc}^{4m-2}(A_\de) \geq {\Hc}^{4m-2}(B_{\frac{1}{2}}^{4m-2}(0)) - C_m \de$ and $$\sup\limits_{0<r<\frac{1}{2}} \frac{1}{r^{4m-2}}\int\limits_{B_r^{4m-2}(Y)} f_\ell(X_1) d\, X_1 < \de \,\, \text{ for all } Y \in A_\de \text{ and for all large enough } \ell.$$

Recall further that for each $\ell$ the map $u_\ell$ is smooth away from a ${\Hc}^{4m-2}$-negligeable set\footnote{We are using Proposition \ref{Prop:epsregularity} and the almost continuous representative of a $W^{1,2}$ map as we did in section \ref{blowupanalysis}.}. Taking a countable union we conclude that for ${\Hc}^{4m-2}$-a.e. $X_1 \in B^{4m-2}(0)$ and for all $X_2 \in B^{2}(0)$ all maps $u_\ell$ are smooth at $(X_1, X_2)$. Therefore we have a sequence of points $X_1^\ell \in B_{\frac{1}{2}}^{4m-2}(0)$ such that

\begin{equation}
 \label{eq:choice1}
\text{ each map $u_\ell$ is smooth near $ \{X_1^\ell\} \times B_{1}^{2}(0)$}
\end{equation}
$$\text{and }\sup\limits_{0<r<\frac{1}{2}} \frac{1}{r^{4m-2}}\int\limits_{B_r^{4m-2}(X_1^\ell)} f_\ell(X_1) d\, X_1 \to 0 \, \text{ as }  \ell \to \infty.$$

Now we need to ``track down the first bubble''. For this purpose we show that, for every $\ell$ large enough, we may find $\de_\ell \in \left(0, \frac{1}{2}\right)$ and $X_2^\ell \in B_{\frac{1}{2}}^2(0)$ such that 

\begin{equation}
 \label{eq:choice2}
 \max\limits_{X_2 \in B_{1/2}^{2}(0)}\,\, \frac{1}{{\de}_\ell^{4m-2}} \int\limits_{B_{{\de}_\ell}^{4m-2}(X_1^\ell) \times B_{{\de}_\ell}^{2}(X_2)}|\nabla u_\ell|^2 dx
\end{equation}
$$\text{is achieved at } X_2^\ell \text{ with value }\frac{\eps_0}{8 \cdot 2^{4m}} \text{ and } X_2^\ell  \to (0,0) \text{ for } \ell \to \infty.$$
Indeed, the smoothness of $u_\ell$ - first line of (\ref{eq:choice1})- gives that for each $\ell$ there is a $\de(\ell)>0$ such that for all $X_2 \in B_{\frac{1}{2}}^2(0)$ we have

\be
\label{condI}
\frac{1}{{\de}^{4m-2}} \int\limits_{B_{{\de}}^{4m-2}(X_1^\ell) \times B_{{\de}}^{2}(X_2)}|\nabla u_\ell|^2 dx \leq \frac{1}{2}\frac{\eps_0}{8 \cdot 2^{4m}},
\ee
whenever $\de < \de(\ell)$. On the other hand we may fix any arbitrary $\de >0$ and have, for all $\ell$ large enough, that

\be
\label{condII}
\max\limits_{X_2 \in B_{1/2}^{2}(0)}\,\, \frac{1}{{\de}^{4m-2}} \int\limits_{B_{{\de}}^{4m-2}(X_1^\ell) \times B_{{\de}}^{2}(X_2)}|\nabla u_\ell|^2 dx \geq {\eps}_0,
\ee
for if that were not the case, we would use the $\eps$-regularity result Proposition \ref{Prop:epsregularity} and conclude that $|\nabla u_\ell| \leq \frac{C_m \sqrt{\eps_0}}{\de}$ on $B_{\de/2}^{4m-2}(X_1^\ell) \times B^2_{1/4}(0)$. This would give $C^{1,\alpha}$-convergence of $u_\ell$ on $B_{\de/2}^{4m-2}(X_1^\ell) \times B^2_{1/4}(0)$, contradicting the existence of the blow up set $B_1^{4m-2}(0) \times (0,0)$. Conditions (\ref{condI}) and (\ref{condII}) imply (with the smoothness of $u_\ell$ which gives continuity in $\delta$ of the energy ratio at a point) that for every large enough $\ell$ we can find $\de_\ell$ satisfying (\ref{eq:choice2}) with $\de_\ell \to 0$ as $\ell \to \infty$. Finally let us check that $X_2^\ell \to (0,0)$. If that were not the case, i.e. if there exists $c>0$ s.t. $|X_2^\ell|\geq c$  for all $\ell$ in a subsequence (not relabeled), then we would have (using the almost monotonicity of the energy ratio for balls centered at $(X_1^\ell, X_2^\ell)$) for all $\ell$ large enough so that $c > 4 \de_\ell$

$$\int\limits_{B_{{1}}^{4m-2}(X_1^\ell) \times (B_{{1/2}}^{2}(X_2^\ell) \setminus B_{{c/2}}^{2}(X_2^\ell))}|\nabla u_\ell|^2 dx \geq C_{c,m,{\eps}_0}$$
independently of $\ell$ for all large enough $\ell$. However this contradicts the strong convergence $u_\ell \to cnst$ on $B_{{1}}^{4m-2}(X_1^\ell) \times (B_{{1/2}}^{2}(X_2^\ell) \setminus B_{{c/2}}^{2}(X_2^\ell))$.

\medskip

We consider now the maps $v_\ell$ defined by $v_\ell(y)=u_\ell\left((X_1^\ell, X_2^\ell) + \de_\ell y\right)$. The map $v_\ell$ is defined on $B^{4m-2}_{R_\ell} \times B^{2}_{R_\ell}:=B^{4m-2}_{\frac{1}{2\de_\ell}} \times B^{2}_{\frac{1}{2\de_\ell}}$ and $R_\ell \to \infty$. Then we have 
\begin{equation}
\label{eq:convergenceofthefirstbubble}
v_\ell \to v \text{ in } C^{1,\alpha}_{\text{loc}}(\R^{4m-2}\times \R^2) \text{ as }\ell\to \infty, 
\end{equation}
where $v(y)=v(y_{4m-1}, y_{4m})$ is a smooth nonconstant harmonic map from $\R^2$ into $\Nc$ with finite energy ($v$ does not depend on the remaining $(4m-2)$ variables in view of the second line of (\ref{eq:choice1})). For the proof of the $C^{1,\alpha}_{\text{loc}}$-convergence we refer to \cite[page 818]{Lin}. By Sacks-Uhlenbeck's result \cite{SacksU} we can see $v$ as a smooth harmonic map from $\mathbb{S}^2$ into $\Nc$ (using the conformal change of coordinates induced by stereographic projection from the north pole). This gives the first bubble $\phi_1$.  

Remark that the $C^{1,\alpha}$-convergence is only local on $\R^2$, therefore after the conformal change of variable that lets us see the convergence on $S^2$ we have a sequence of domains exhausting $S^2$ and leaving out a small disk (shrinking as $\ell \to \infty$) around the north pole: the $C^{1,\alpha}$-convergence holds on any domain that leaves out a fixed disk around the north pole. On the remaining neck we need to perform further analysis. First of all we must locate other points where the energy is concentrating (locate other \textit{bubble domains}) and where we can produce more bubbles by repeating the procedure illustrated for the first bubble: ``dilation of disks when needed/reparametrization on $S^2$'' . We will find possibly several but surely finitely many bubbles, since each bubble contributes at least with a fixed positive amount to the energy and we have a global bound. The second task will be to check if there is energy that accumulates in the necks connecting the bubble domains: this will be ruled out and will lead to Theorem \ref{thm:quantization}.

\medskip

By Theorem \ref{thm:W21estimate} we have that $\|\nabla u_\ell\|_{W^{1,1}} \leq C_{\Lambda, \Mc, \Nc}$ independently of $\ell$. Recall now that $W^{1,1}(\R^2)$ embeds continuously in the Lorentz space $L^{2,1}(\R^2)$. We recall the notion of Lorentz spaces

$$L^{2,1}(\R^m) = \left\{f: \|f\|_{L^{2,1}}:=\int_0^\infty \sqrt{\mathcal{L}^m\left(\{|f|\geq t\} \right)}dt <\infty\right\}.$$

$$L^{2,\infty}(\R^m) = \left\{f: \|f\|_{L^{2,\infty}}^2:=\sup_{t>0} t^2 \mathcal{L}^m\left(\{|f|\geq t\} \right) <\infty\right\}.$$
Therefore, by first choosing good slices and then embedding, we get that there exist a constant $\Lambda^*$ and a subset $E_\ell \subset B^{4m-2}_{\frac{1}{2}}$ with measure $\mathcal{L}^{4m-2}(E_\ell) \geq \frac{99}{100}\mathcal{L}^{4m-2}(B^{4m-2}_{\frac{1}{2}})$ such that for all $X_1 \in E_\ell$ we have

$$\|(\nabla u_\ell)(X_1, \cdot)\|_{L^{2,1}(B^2_{\frac{2}{3}}(0))} \leq \Lambda^*.$$
This choice of slices, i.e. values $X_1 \in E_\ell$, should be combined with the choice of $F_\ell \subset B^{4m-2}_{\frac{1}{2}}$ with measure $\mathcal{L}^{4m-2}(F_\ell) \geq \frac{99}{100}\mathcal{L}^{4m-2}(B^{4m-2}_{\frac{1}{2}})$ such that the two conditions in (\ref{eq:choice1}) are satisfied for all $X_1 \in F_\ell$. For the sequel we will work on \textit{``good slices''}, i.e. good values of $X_1$, meaning that $X_1 \in E_\ell \cap F_\ell$. We therefore consider the maps $\overline{u}_\ell(X_2) := u_\ell(X_1^\ell, X_2)$ for a sequence $X_1^\ell$ such that\footnote{In view of Section \ref{Holomorphicity properties of smooth blow-up sets} it should be stressed that we have a huge freedom in choosing this sequence $X^1_\ell$, namely for every $\ell$ we only need to avoid a $\Hc^{4m-2}$-small set, with measure at most $1 \%$ of $\mathcal{L}^{4m-2}(B^{4m-2}_{\frac{1}{2}})$. In Section \ref{Holomorphicity properties of smooth blow-up sets} the $2$-planes $X^1=X^1_\ell$ will be read in the original context (prior to the dimension reduction, i.e. prior to the transition from $u_\ell$ to $\tilde{u}_\ell$).}  for every $\ell$ we have 

$$\text{ each map $u_\ell$ is smooth near $ \{X_1^\ell\} \times B_{1}^{2}(0)$}$$
$$\sup\limits_{0<r<\frac{1}{2}} \frac{1}{r^{4m-2}}\int\limits_{B_r^{4m-2}(X_1^\ell)} f_\ell(X_1) d\, X_1 \to 0 \, \text{ as }  \ell \to \infty.$$
\begin{equation}
\label{eq:choice3}
\|(\nabla u_\ell)(X_1^\ell, \cdot)\|_{L^{2,1}(B^2_{\frac{2}{3}}(0))} \leq \Lambda^*.
\end{equation}
From the blow up analysis we know that
$$\Theta = \lim_{\ell \to \infty} \int\limits_{B^2_{1/2}(0)} |\nabla \overline{u}_\ell|^2$$
and the content of Theorem \ref{thm:quantization} is to show that this is a sum $\sum_{s=1}^{S} E(\phi_s),$ where each $\phi_s:S^2 \to \Nc$ is a (smooth) harmonic map. We have seen before, using (\ref{eq:choice2}) to find suitable points $X^\ell_2$ and suitable dilating factors $\de_\ell$, that $v_\ell(X_2)=\overline{u}_\ell(X_2^\ell + \de_\ell(X_2-X_2^\ell))$ converge strongly in $C^{1,\alpha}_\text{loc}$ to $v$ harmonic from $\R^2$ into $\Nc$, which becomes the first bubble $\phi_1$ after composing with stereographic projection from the north pole.

For any fixed $R$ the strong local convergence on $B_R(0)$ gives $\int_{B_R}|\nabla v_\ell|^2 \to \int_{B_R}|\nabla v|^2$ and by conformal invariance $\int_{B_{\de_\ell R}(X_2^\ell)} |\nabla \overline{u}_\ell|^2 \to \int_{B_R}|\nabla v|^2$. So the energy left (after the first bubble has been accounted for) is 

\begin{equation}
\label{eq:L2sumofspheres}
\lim_{R \to \infty} \lim_{\ell \to \infty} \int_{B^2_{1/2}(0) \setminus B_{\de_\ell R}(X_2^\ell)} |\nabla \overline{u}_\ell|^2
\end{equation}
(here $\nabla$ stands for $D_{X_2}$) and we want to prove that it equals $\sum_{s=2}^{S} E(\phi_s).$

Observe that, for any fixed $c>1$ and for any $R>0$ we have $\int_{B_{c R}\setminus B_{R}}|\nabla v_\ell|^2 \to \int_{B_{c R}\setminus B_{R}}|\nabla v|^2$ and by conformal invariance $\int_{B_{c\de_\ell R}(X_2^\ell)\setminus B_{\de_\ell R}(X_2^\ell)} |\nabla \overline{u}_\ell|^2 \to \int_{B_{c R}\setminus B_{R}}|\nabla v|^2$ as $\ell \to \infty$. Therefore

$$\lim\limits_{R\to \infty }\lim\limits_{\ell \to \infty}\int_{B_{c\de_\ell R}(X_2^\ell)\setminus B_{\de_\ell R}(X_2^\ell)} |\nabla \overline{u}_\ell|^2 = \lim\limits_{R\to \infty }\int_{B_{c R}\setminus B_{R}}|\nabla v|^2 =0. $$
Moreover we have for any $c>1$ (remark that for all $\ell$ large enough the domain ${B_{\frac{1}{2}}(X_2^\ell)\setminus B_{\frac{1}{2c}}(X_2^\ell)}$ stays well away from $\Sigma$, since $X_2^\ell \to (0,0)$):
$$\lim\limits_{\ell \to \infty}\int_{B_{\frac{1}{2}}(X_2^\ell)\setminus B_{\frac{1}{2c}}(X_2^\ell)} |\nabla \overline{u}_\ell|^2=0. \,\,\,\,\footnote{Observe that $$\lim\limits_{\ell \to \infty}\int_{B_{\frac{1}{2}}(X_2^\ell)\setminus B_{\frac{1}{2c}}(X_2^\ell)} |\nabla \overline{u}_\ell|^2 = \lim\limits_{\ell \to \infty}\int_{B_{\frac{1}{2\de_\ell}}(0)\setminus B_{\frac{1}{2c\de_\ell}}(0)} |\nabla v_\ell|^2= 0, $$
i.e. the energies of $v_\ell$, that actually do escape partly to infinity when extra bubbles are produced, must escape ``in a controlled way'', i.e. the domains where these energies are localized (further \textit{bubble domains}) escape to infinity slower than the annuli ${B_{\frac{1}{2\de_\ell}}(0)\setminus B_{\frac{1}{2c\de_\ell}}(0)}$ (for any choice of $c>1$), i.e. with a speed that is of lower-order compared to the speed of growth of the domains $B_{\frac{1}{\de_\ell}}(0)$. We stress, in order to avoid confusion for the reader that is used to Parker's bubble tree construction \cite{Parker}, that here (following \cite{Lin}, \cite{LR}) the bubble map produced by the conformal wrapping of the $2$-disks (domains of the $v_\ell$) onto $S^2$ is constructed, although equivalently in the end, differently than in \cite{Parker}: here we are covering $S^2$ except for a small disk around the north pole (this missing disks shrinks to a point as $\ell\to \infty$) and the extra bubbling domains are escaping towards the north pole as well (at a slower speed); in \cite{Parker} the bubbling domains are confined in the southern hempisphere and it is on this hemisphere that further bubbling analysis is performed. The same goes for the neck domain: here it is an annular domain with one boundary circle  coinciding with the boundary of the missing disk around the north pole (thus going to the north pole as $\ell \to \infty$), while the other boundary circle is fixed when $R$ is fixed (it comes from $\p B_R(0)$) and goes to the north pole as $R \to \infty$. In \cite{Parker}, on the other hand, the neck domain is located close to the south pole. The difference originates from a different choice of the dilating factors $\delta_\ell$.}$$

\medskip

Consider now the change of variables (conformal in $X_2 \in \R^2$) induced by 
\begin{equation}
\label{eq:elongatedneck}
W_\ell(X_1, t, \theta) = u_\ell(X_1, e^{-t}, \theta) \,\,\, \text{ with } \,\, X_2 = (r, \theta) \text{ in polar coordinates.}                                                                    
\end{equation}
 Then $ \int_{B^2_{1/2}(0) \setminus B^2_{\de_\ell R}(X_2^\ell)} |\nabla \overline{u}_\ell|^2$ coincides with

$$E(W_\ell, B_\ell) := \int\limits_{\{X_1^\ell\}\times [\log 2|, \log \de_\ell R|] \times \mathbb{S}^1} |\nabla W_\ell|^2 $$
(where $\nabla$ stands for the standard gradient in $t$ and $\theta$) and we want to show that this is a sum of energies of harmonic spheres arising in the blow-up analysis; we know that this energy cannot accumulate close to the boundaries of the domain $B_\ell$, namely for any $M>0$ (here $c=e^M$ with $c$ used above)

\begin{equation}
 \label{eq:noaccumulationatboundaries}
E(W_\ell, Q(0,M)) := \int_{[\log 2, \log 2 + M] \times \mathbb{S}^1} |\nabla W_\ell|^2 \to 0 \,\, \text{ as }\ell \to \infty 
\end{equation}
$$\lim\limits_{R\to \infty }\lim\limits_{\ell \to \infty}E(W_\ell, Q(\ell,M)) := \lim\limits_{R\to \infty }\lim\limits_{\ell \to \infty}\int_{[|\log \de_\ell R|-M, |\log \de_\ell R|] \times \mathbb{S}^1} |\nabla W_\ell|^2 = 0 . $$

\medskip

Recall that there is a finite number of bubbles only. At this point we face a dichotomy: either there is only one bubble or we can locate another bubble domain (in the new variables). If the second option happens, then we select the next bubble domain by choosing, for each $\ell$, a point $t_\ell \in [\log 2, |\log \de_\ell R|-1]$ such that

$$e^{(4m-2)t} \int\limits_{|X_1|\leq e^{-t}} \int_{t}^{t+1}\int\limits_{\mathbb{S}^1} |\nabla W_\ell|^2 $$
(here $\nabla=\left(e^{-t}D_{X_1}, \frac{\p}{\p t}, \frac{\p}{\p \theta}\right)$) achieves its maximum (for $t \in [\log 2, |\log \de_\ell R|-1]$) at $t=t_\ell$ and this maximum is $\geq \eps_1>0$ for some\footnote{If we can make sure that for any $\eps_1>0$ this energy is below $\eps_1$ then we must be in the case where we have only the first bubble $\phi_1$, as will be clear from the subsequent argument.} fixed $\eps_1$. Note that the map $W_\ell$ is triholomorphic and almost stationary harmonic with respect to $\nabla=(e^{-t}D_{X_1}, \frac{\p}{\p t}, \frac{\p}{\p \theta})$, compare \cite[(2.12)]{LR}\footnote{There is a little misprint, the right hand side of \cite[2.12]{LR} should read $-e^{-2t}\Delta_{X_1} W_i$, where $\Delta_{X_1}$ is the Laplacian in the $X_1$ coordinates.}. Keeping in mind that the cyclinder $[\log 2, |\log \de_\ell R|] \times \mathbb{S}^1$ gets longer and longer as $\ell \to \infty$, we find from estimates (\ref{eq:noaccumulationatboundaries}) that (respectively) $t_\ell \to \infty$ and $|\log \de_\ell R|-t_\ell \to \infty$: in other words, we can center the cyclinders at $t_\ell$ and we will have that this translated cylinders get longer with lengths going to $\infty$ on both sides (with respect to their centre) as $\ell \to \infty$. The centering of the cyclinders corresponds to the change of variable $\tilde{t}=t-t_\ell$. With the extra change of variable $X_1 = e^{-t_\ell} Y_1$ we define $V_\ell(Y_1, \tilde{t}, \theta) = W_\ell(X_1, t-t_\ell, \theta)$. We then have that $V_\ell$ is defined on $B^{4m-2}_2(0) \times [-M_\ell, M_\ell] \times \mathbb{S}^1$ with $M_\ell \to \infty$ and $V_\ell$ is triholomorphic and almost stationary harmonic (compare \cite[(3.3)]{LR}) and

\begin{equation}
 \label{eq:lowerboundV}
\int\limits_{|Y_1|\leq 1} \int_{0}^{1}\int\limits_{\mathbb{S}^1} |\nabla V_\ell|^2 \geq {\eps}_1,
\end{equation}
where $\nabla = \left(e^{-\tilde{t}}D_{Y_1}, \frac{\p}{\p \tilde{t}}, \frac{\p}{\p \theta}\right)$. The Dirichlet energies of $V_\ell$ are uniformly bounded and we may assume that $V_\ell \to V_\infty$ weakly in $W^{1,2}_{\text{loc}}$ (possibly strongly). The situation now is rather similar to the one that we were facing with the maps $v_\ell$ when we produced the first bubble. The map $V_\infty$ is smooth and harmonic on $B^{4m-2}_2 \times (-\infty, \infty) \times \mathbb{S}^1$ and independent of the first $(4m-2)$ variables. We have two cases (compare bottom pictures of Figure \ref{fig:bubbletree} in Section \ref{Holomorphicity properties of smooth blow-up sets}):

\textbf{(i) }If the convergence holds strongly in $W^{1,2}_{\text{loc}}$ then the map $V_\infty$ is not constant thanks to (\ref{eq:lowerboundV}). As such, by conformally changing its domain $(-\infty, \infty) \times \mathbb{S}^1$ to $S^2 \setminus \{\text{the two poles}\}$ we produce the second bubble $\phi_2:S^2 \to \Nc$ (this is a case where further dilation is not needed and we can reparametrise on $S^2$ straight away). Intuitively we have replaced the piece $[0,1]\times \mathbb{S}^1$ with a sphere without two small disks. So instead of the original unique cyclinder (the neck domain we started from) we have a spherical domain (without two disks) where convergence of the $V_\ell$ happens strongly, actually $C^{1,\alpha}$, and two neck domains left from breaking in two the original cyclinder. On each of these two neck domains we start over with the bubbling analysis.

\textbf{(ii)} If the convergence is weak then we proceed as in the case of $\phi_1$: we find a (non-empty, by (\ref{eq:lowerboundV})) blow-up set $\tilde{\Sigma}$, that is of the form $B^{4m-2}_2 \times (\overline{t}, \overline{\theta})$ and proceed to suitable dilations around concentration point of the energy (for a sequence of points converging to $(\overline{t}, \overline{\theta})$). We thus replace a small cap on the cyclinder (around $(\overline{t}, \overline{\theta})$, this is the new bubble domain) with a new $S^2 \setminus \{\text{a small cap}\}$ and attach them by a cyclinder (a new neck domain). The spherical part becomes the domain of $\phi_2$. On the new neck domain we start over with the bubbling analysis. Moreover, on the remaining part of the previous cyclinder (from which we removed a small disk) we proceed to the identification of further bubbling domains, if any are present.

\medskip

This procedure will stop at some point because only a finite number of bubbles can appear, since each bubble contributes with at least a fixed amount of energy and this cannot become infinite ($\Theta$ is finite). Recalling (\ref{eq:L2sumofspheres}), now that all the $\phi_s$ have been identified, the next task is to show that the Dirichlet energies on the (finitely many) \textit{neck domains} go to $0$ as $\ell \to \infty$. To fix strategy and terminology (since topologically the necks are always annuli no matter how many times we perform a conformal change into a cylinder) we will look for an $L^{2,\infty}$-estimate for $\nabla \overline{u}_\ell$ in each neck domain (thinking of it as an annulus on a sphere, as in the original picture obtained when we produced $\phi_1$) by looking for an $L^\infty$-estimate in the \textit{elongated neck}, i.e. the long cylinder obtained by the conformal change of variable (\ref{eq:elongatedneck}). We conclude the section by explaining why we look for such an (apparently) different estimate and how we can get it.

The Dirichlet energy is conformally invariant, so it does not matter how many times we conformally change a neck, therefore we can assume that the elongated neck we are looking at admits no further bubble domains. The aim is to conlcude that $|\nabla \overline{u}_\ell|$ goes to $0$ in the $L^{2,\infty}$-sense in each neck, then we will conclude that the $L^2$-norm must go to $0$ as well by interpolation with the uniform $L^{2,1}$-bound. The $L^{2,\infty}$-estimate follows from a control of the type

\begin{equation}
\label{eq:L2inftyestimate}
|X_2 - X_2^\ell| | \nabla_{X_2} \overline{u}_\ell(X_2)| \leq \sqrt{\eps}  \text{ on each neck domain} 
\end{equation}
for $\eps$ as small as we like as long as we choose $\ell$ and $R$ large enough. Indeed (\ref{eq:L2inftyestimate}) implies that whenever $t>0$ and $|\nabla_{X_2} \overline{u}_\ell(X_2)|\geq t$ then we must have $|X_2 - X_2^\ell|\leq \frac{\sqrt{\eps}}{t}$. This forces the set $\{X_2: |\nabla_{X_2} \overline{u}_\ell(X_2)|\geq t\}$ to be contained in the ball of radius $\frac{\sqrt{\eps}}{t}$ centered at $X_2^\ell$. In particular the $L^{2,\infty}$-norm of $|\nabla \overline{u}_\ell|$ is below $\eps$ by definition. Now recall from (\ref{eq:elongatedneck}) that

$$r=e^{-t}, \,\,\, -\log r=t,\,\, \frac{-dr}{r} = dt, \,\,\,r \frac{\p}{\p r} = \frac{\p}{\p t}$$
and so $|X_2 - X_2^\ell|\nabla_{X_2} \overline{u}$ is nothing but the derivative of $W_\ell$ after the conformal change of variables. In other words (\ref{eq:L2inftyestimate}) amounts to a pointwise $L^\infty$ estimate on $\nabla W_\ell$ in the elongated neck. Remark now that the conformal change of variables makes $W_\ell$ almost stationary harmonic, so in every fixed size domain contained in a neck domain we have that the energy is as small as we wish in the $L^2$ sense (otherwise there would be another bubble domain there, as explained earlier) and by the interior gradient estimate we can conclude a pointwise $L^\infty$-estimate (as small as we wish) on $\nabla W_\ell$. Once we achieve that $|\nabla \overline{u}_\ell|$ goes to $0$ in the $L^{2,\infty}$-sense in the neck domains, using the uniform estimate on the $L^{2,1}$-norms from (\ref{eq:choice3}) we conclude by interpolation\footnote{One way to see this is by exploiting the $L^{2,\infty}-L^{2,1}$ duality, indeed for any $v$ such that $\nabla v \in L^{2,1} \subset L^2 \subset L^{2,\infty}$ we have $\int |\nabla v|^2 = \int \nabla v \cdot \nabla v \leq \|\nabla v\|_{L^{2,1}}\|\nabla v\|_{L^{2,\infty}}$.} that the $L^2$ norm in the neck domains goes to $0$, i.e. in (\ref{eq:L2sumofspheres}) we only find energies of bubbles. We refer to \cite{LR} for the technical adjustments of these ideas.

\medskip

We conlude the section with the following

\begin{Prop}
\label{Prop:complexstructureofbubble}
With the assumptions of Theorem \ref{thm:localholomorphicitysmooth}, for $\Hc^{4m-2}$-a.e. $x \in \Sigma$ there exists $(a_x, b_x, c_x)$ with $a_x^2 + b_x^2+c_x^2=1$ s.t. all the bubbles $\phi_s:S^2 \to \Nc$ at $x$ (using the notation in (\ref{eq:sumofbubbles})) induce holomorphic $S^2$'s in $\Nc$ with respect to the complex structure $-(a_x I + b_x J + c_x K)$. Moreover the approximate tangent plane at $x$ to $\Sigma$ is holomorphic with respect to $a_x i + b_x j + c_x k$.
\end{Prop}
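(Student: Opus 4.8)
The plan is to run the bubble-extraction scheme of Section \ref{quantization} and then feed each rescaled limit into an elementary piece of quaternionic linear algebra on the $2$-plane transverse to $\Sigma$. First I would fix $x\in\Sigma$ in the $\Hc^{4m-2}$-full-measure set of points where the approximate tangent plane $T_x\Sigma$ exists, $u$ is approximately continuous with $\Theta(u,x)=0$, and the density $\Theta(x)$ of the defect measure is defined — this is the set on which Section \ref{quantization} already operates. As there, I would blow up the $u_\ell$ at $x$ and choose normal coordinates centred at $x$ so that $i(x),j(x),k(x)$ become the standard units $\mathscr{j}_1,\mathscr{j}_2,\mathscr{j}_3$ on $\R^{4m}\cong\mathbb{H}^m$ and $T_x\Sigma=\R^{4m-2}\times\{(0,0)\}$; write $P:=(T_x\Sigma)^{\perp}=\{0\}^{4m-2}\times\R^{2}$ for the transverse plane, with oriented orthonormal basis $\{e,f\}$. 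Every bubble $\phi_s$ attached at $x$ is produced, through the iterated ``dilate / reparametrise on $S^2$ / conformally straighten the neck into a cylinder'' procedure of Section \ref{quantization}, as a locally $C^{1,\alpha}$ limit of maps obtained from the $u_\ell$ by dilations and conformal changes whose base points converge to $x$. Passing the first-order equation (\ref{eq:triholo}) to such a limit — the convergence is strong in $C^1_{\mathrm{loc}}$, the conformal changes are the natural ones on the $2$-dimensional transverse slices, and the structures on $\Mc$ converge to $\mathscr{j}_1,\mathscr{j}_2,\mathscr{j}_3$ — gives that each such limit is a smooth non-constant map $v\colon\R^{2}\cong P\to\Nc$, independent of the $T_x\Sigma$-variables, satisfying the \emph{constant-coefficient} triholomorphic equation $dv=I\,dv\,\mathscr{j}_1+J\,dv\,\mathscr{j}_2+K\,dv\,\mathscr{j}_3$.

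Then comes the algebra. For $\ell=1,2,3$ set $A_\ell:=\mathrm{pr}_P\circ\mathscr{j}_\ell|_P\colon P\to P$; since $\mathscr{j}_\ell$ is a skew isometry, $A_\ell$ is skew-symmetric, hence $A_\ell=a_\ell R$ where $R$ is a fixed $\pi/2$-rotation (complex structure) of $P$ and $a_\ell=\langle\mathscr{j}_\ell e,f\rangle$. Orthonormality of $\{e,\mathscr{j}_1 e,\mathscr{j}_2 e,\mathscr{j}_3 e\}$ gives $|a|\le 1$ for $a:=(a_1,a_2,a_3)$. Because $v$ depends only on the $P$-variables, $dv$ annihilates $T_x\Sigma$, so $dv(\mathscr{j}_\ell X)=dv(A_\ell X)=a_\ell\,dv(RX)$ for $X\in P$, and the triholomorphic equation restricted to $P$ collapses to $dv(X)=\mathcal{I}_a\,dv(RX)$ with $\mathcal{I}_a:=a_1 I+a_2 J+a_3 K$. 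Iterating once and using $R^2=-\mathrm{Id}$ together with $\mathcal{I}_a^{2}=-|a|^{2}\,\mathrm{Id}$ yields $(1-|a|^{2})\,dv=0$; since $v$ is non-constant this \emph{forces} $|a|=1$. Hence $\mathcal{I}_a$ is an honest complex structure on $\Nc$, $\mathcal{I}_a^{-1}=-\mathcal{I}_a$, and the displayed identity becomes $dv\circ R=-\mathcal{I}_a\circ dv$, i.e. $v$ is holomorphic from $(P,R)$ to $(\Nc,-\mathcal{I}_a)$. After the conformal compactification / removable-singularity step of Section \ref{quantization}, the sphere $\phi_s$ is holomorphic for $-(a_x I+b_x J+c_x K)$, where $(a_x,b_x,c_x):=a$.

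For the tangent plane, put $\mathscr{j}_a:=a_1\mathscr{j}_1+a_2\mathscr{j}_2+a_3\mathscr{j}_3$, which is a (metric-compatible) complex structure on $\R^{4m}$ since $|a|=1$. Then $\mathrm{pr}_P\circ\mathscr{j}_a|_P=\sum_\ell a_\ell A_\ell=(\sum_\ell a_\ell^{2})R=R$ is an isometry of $P$, so $\mathscr{j}_a$ has no component out of $P$ on $P$, i.e. $\mathscr{j}_a$ preserves $P$ and hence also $T_x\Sigma=P^{\perp}$; read back in the original coordinates, $T_x\Sigma$ is holomorphic for $a_x i+b_x j+c_x k$. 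Finally, $a$ was computed from $P=(T_x\Sigma)^{\perp}$ alone — independently of which bubble we started from — so it is one and the same for all spheres attached at $x$, which gives the asserted uniformity of the structures $-(a_x I+b_x J+c_x K)$ on $\Nc$ and $a_x i+b_x j+c_x k$ on $\Mc$. (The a priori bound $|a|\le1$ together with the forced $|a|=1$ also says, incidentally, that the tangent planes of $\Sigma$ are precisely the ``extremal'' ones for this inequality.)

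The one genuinely delicate point is the last assertion of the first paragraph: one must check carefully that the iterated rescaling / reparametrisation / neck-straightening construction of Section \ref{quantization} really delivers, for \emph{each} bubble (including those extracted inside elongated necks after several conformal changes), a limit that is \emph{exactly} triholomorphic with the structures frozen at $x$ — in particular that conformally straightening a neck into a cylinder does not spoil this first-order structure and that all the shifting base points of the dilations converge to $x$. This is, however, already implicit in the convergence statements used in Section \ref{quantization} to extract the bubbles (and in the corresponding estimates of \cite{LR}); granting it, the remaining Steps are the short computation above.
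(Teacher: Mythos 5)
Your argument is correct and follows the paper's own route: blow up at $x$, observe that the bubble $v$ depends only on the two coordinates normal to $T_x\Sigma$, then read off the residual quaternionic linear algebra on the normal $2$-plane $P$. What you add beyond the paper's brief sketch, and what makes your version tighter, is the derivation that $|a|=1$. The paper asserts (pointing to Wang's Lemma 2.2) that ``any two directions identify a complex $2$-dimensional plane'' for some $(a_x,b_x,c_x)\in S^2$; for $m>1$ this is not literally true of an arbitrary $2$-plane in $\mathbb{H}^m$, since all the products $\langle\mathscr{j}_\ell e,f\rangle$ can vanish (e.g.\ take $e=e_1$, $f=e_2$ in two distinct quaternionic coordinate lines). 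Your computation $dv=-\mathcal{I}_a^2\,dv=|a|^2\,dv$, combined with the non-constancy of the bubble, shows that the normal plane at a genuine bubbling point is \emph{forced} to be a complex line for $\mathscr{j}_a$, closing that gap at essentially no extra cost; the same observation is what makes assertion (ii) about $T_x\Sigma$ automatic via $\mathscr{j}_a(P)=P$. The caveat you raise at the end — that the degenerate first-order equation must survive each conformal neck-straightening and the iterated dilations of the bubble-tree — is exactly the right one to flag, and as you note it is absorbed into the $C^{1,\alpha}_{\mathrm{loc}}$ convergence statements of Section \ref{quantization} and \cite{LR}, so no separate argument is needed.
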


This is shown in \cite{Wang} Lemma 2.2; we want to make transparent the important underlying idea and stress an aspect that will be useful to keep in mind in view of the arguments in Section \ref{Holomorphicity properties of smooth blow-up sets}.
Let $x$ be a ``good point'' on $\Sigma$, i.e. $T_x \Sigma$ exists and the limiting map $u$ is continuous at $x$. The creation of a bubble at $x$ is essentially due to the fact that the Dirichlet energy is accumulating at $x$: more precisely, the components $\nabla_{X_2} u_\ell$ are producing this concentration (here $X^2$ denotes the two directions normal to $T_x \Sigma$).  A suitable reparametrization in the normal directions $X_2$ provides the bubbles, whilst in the directions tangential to $\Sigma$ the components $\nabla_{X_1} u_\ell$ are going to $0$ in $L^2$. This causes the bubbles to depend only on the normal coordinates $X_2$. The equation (\ref{eq:triholo}) passes therefore to the limit to each bubble with a degeneration in the $X_1$-directions, i.e. only the two derivatives in the directions $X_2$ are non-zero. It is easy to check that (i) any two directions identify a complex $2$-dimensional plane with respect to a complex structure $a_x i + b_x j + c_x k$ for a certain choice of $(a_x, b_x, c_x) \in S^2$ (ii) the $(4m-2)$-plane $T_x \Sigma$ is holomorphic for the same structure $a_x i + b_x j + c_x k$ that makes the normal $2$-plane holomorphic. 
A short computation shows that whenever we annihilate two directions in (\ref{eq:triholo}) we find that the map becomes holomorphic with respect to $((a_x i + b_x j + c_x k), -((a_x I + b_x J + c_x K))$, hence the result. The normal $2$-plane can then be conformally transformed into an $S^2 = \CP^1$, as in the standard definition of bubble. 
\begin{oss}
\label{oss:holo&antiholo}
It is worthwile noting the following: it cannot happen that at a fixed point $x$ we produce some bubbles that are holomorphic with respect to $(a_x i+ b_x j+ c_x k), -((a_x I + b_x J + c_x K))$ and some others that are holomorphic with respect to $(a_x i+ b_x j+ c_x k), ((a_x I + b_x J + c_x K))$. With our choice of the PDE (\ref{eq:triholo}) we are allowing, say, $(i,-I)$-holomorphic bubbles to appear but it can be checked immediately that if a map is triholomorphic as in (\ref{eq:triholo}) and it is in addition $(i,I)$-holomorphic, then it must have rank $4$ at least, whilst a bubble has rank $2$. This feature is not typical of the general bubbling issue for stationary harmonic maps, where it could happen indeed that at a certain point we have some bubbles that are holomorphic and some others that are anti-holomorphic.
\end{oss}

\section{Holomorphicity properties of blow-up sets - proof of Theorem \ref{thm:localholomorphicitysmooth}}
\label{Holomorphicity properties of smooth blow-up sets}

Under the asumption that $u$ is smooth on $B^{4m}$ we know that $\Sigma$ coincides with the support of the defect measure $\nu$ in $B^{4m}$, and we are assuming that this support is non-empty. We will prove the statement of Theorem \ref{thm:localholomorphicitysmooth} when $\mathcal{L}$ is a Lipschitzian graph over some chosen $(4m-2)$-plane. Let us see how the proof of the general case follows from this. For $\mathcal{L}$ as in the assumptions of Theorem \ref{thm:localholomorphicitysmooth} we observe that for every point on $\mathcal{L}$ we have a neighbourhood in which $\mathcal{L}$ is a Lipschitzian graph. Consider the set 

$$\mathcal{D}=\left\{x \in \Sigma \cap B^{4m}: \exists \text{  an open neighbourhood $A_x$ of $x$ and $ai+bj+ck$ with $(a,b,c)\in S^2$ } \right.$$ $$\left. \text{ s.t. $\Sigma \cap A_x = \mathcal{L} \cap A_x$ and this is smooth and pseudoholomorphic w.r.t. $ai+bj+ck$}\right\}.$$
The set $\mathcal{D}$ is open in $\mathcal{L}$ (also observe that the points in $\mathcal{D}$ are of density $1$ for $\Sigma$). Let us show that $\mathcal{D}$ is closed in $\mathcal{L}$. Since $\Sigma$ is closed, we have that the closure in $\mathcal{L}$ of $\mathcal{D}$, denoted $\overline{\mathcal{D}}$, is contained in $\Sigma$. If there exists $y \in \overline{\mathcal{D}} \setminus \mathcal{D}$ we choose an open ball $B'$ centered at $y$ such that $\mathcal{L} \cap B'$ is a Lipschitz graph and remark that $\Sigma \cap B'$ has strictly positive measure (it contains points of density $1$). Then applying the result again, we obtain that in a neighbourhood of $y$ the set $\Sigma$ coincides with $\mathcal{L}$ and it is a smooth pseudoholomorphic submanifold, hence $y \in \mathcal{D}$. Having now established that $\Sigma \cap B^{4m}$ is open and closed in $\mathcal{L}$, by the connectedness of $\mathcal{L}$ we get that $\Sigma \cap B^{4m}=\mathcal{L} \cap B^{4m}$ and it is a smooth submanifold. Moreover locally it is pseudo holomorphic for a fixed almost complex structure, which gives a well defined map from $\mathcal{L}$ into $S^2_{IJK}$ and this map is locally constant: the connectedness of $\mathcal{L}$ yields that it is globally constant, completing the proof.

\medskip 

In the rest of the section we prove Theorem \ref{thm:localholomorphicitysmooth} under the further assumption that $\mathcal{L}$ is a Lipschitizian graph over a $(4m-2)$ dimensional plane. 
Recall that by the rectifiability of $\Sigma$ proved in Section \ref{blowupanalysis} we have that for $\Hc^{4m-2}$-a.e. $x \in \Sigma \cap B^{4m}$ we have that $x$ has density $1$ in $\Sigma$ and that there exists an approximate tangent $T_x \Sigma$, that agrees with the tangent to $\mathcal{L}$ at $x$. Moreover $\mathcal{L} \setminus \Sigma$ is open in $\mathcal{L}$. We obtain Theorem \ref{thm:quantization} for all these (almost all) points on $\Sigma \cap B^{4m}$: when we used the dimension reduction in the beginning of Section \ref{quantization} we had to ensure that at the point under consideration the limit map $u$ was continuous in average and the blow-up set possessed an approximate tangent; the second condition is met now everywhere on $\Sigma \cap B^{4m}$. We thus have, from Proposition \ref{Prop:complexstructureofbubble}, a well-defined map from a.e. $\Sigma \cap B^{4m}$ into the $S^2$ representing the complex structures $aI + bJ + cK$ on the hyperK\"ahler $\Nc$: it associates to a.e. $x \in \Sigma \cap B^{4m}$ the complex structure for which the bubbles $\phi_s$ at $x$ are holomorphic. 

Consider two points $P$ and $Q$ on $\Sigma \cap B^{4m}$ where the approximate tangent to $\Sigma$ exists and agrees with the tangent to $\mathcal{L}$, obtained by the differentiability of $\mathcal{L}$ (so at these points, that only exclude a null set, we have the quantization result). The strategy is to show that the sum of the bubbles at $P$ and the sum of the bubbles at $Q$ are in the \textit{same homology class} in $H_2(\Nc, \Z)$. When this will be achieved, we will conlcude by means of a \textit{calibration argument}. Recall that on $\Nc$ (with its metric $h$) we have an $S^2$-family of calibrations, i.e. the K\"ahler forms $\alpha_{(a,b,c)}:=a \Om_I + b \Om_J + c\Om_K$ for any choice of $(a,b,c)$ with $a^2+b^2+c^2=1$ and a $2$-dimensional integral current is said to be calibrated by $\alpha_{(a,b,c)}$ when a.e. approximate oriented tangent plane $\vec{T}$ is such that the calibration $\alpha_{(a,b,c)}$ and the natural area element induced from $h$ have the same action on $\vec{T}$. For the K\"ahler forms the notion of being calibrated by  $\alpha_{(a,b,c)}$ is equivalent to being holomorphic with respect to the associated complex structure $a I + b J + c K$ (see e.g. \cite[page 4]{B}).

\medskip

We need to recall at this stage, from Section \ref{quantization}, how the analysis of the bubbling at $P$ was performed. Recall in particular the choice of the sequence $X_1^\ell$, see (\ref{eq:choice3}). This choice was performed on the sequence $\tilde{u}_\ell$, that had been obtained in the beginning of Section \ref{quantization} by suitable dilations $(u_\ell)_{P, \lambda_\ell}$ of the given sequence $u_\ell$ at $P$. We focused on $2$-planes normal to $T_P \Sigma$ (which was identified with $B^{4m-2} \times (0,0)$ by a suitable coordinate choice). Fix any $\ell$ large enough and let us read the $2$-planes normal to $T_P \Sigma$ back in the picture of $u_\ell$:  we find a foliation (whose leaves are $2$-disks) of $B_{\lambda_\ell}(P)$. These disks are almost normal to $\mathcal{L} \cap B_{\lambda_\ell}(P)$. The same goes for $Q$, so for each $\ell$ we have a foliation (whose leaves are $2$-disks) of $B_{\lambda_\ell}(Q)$. The disks, as $\ell$ gets large, are actually staying the same, the only thing happening as $\ell$ grows is that we are only seeing the part of the disks inside the smaller balls $B_{\lambda_\ell}(P)$ and $B_{\lambda_\ell}(Q)$.

For $\ell$ large enough the disks in $B_{\lambda_\ell}(P)$ and $B_{\lambda_\ell}(Q)$ are small compared to the size of $B^{4m}$. We then consider a closed $2$-dimensional surface, diffeomorphic to a sphere, that intersects $\Sigma$ exactly at $P$ and $Q$ (and nowehere else - here we are using the fact that $\Sigma$ is contained in a Lipschitz graph). This surface should be slightly deformed around $P$ and $Q$ so that it is $C^2$ and it contains the $2$-disk passing through $P$ and the $2$-disk passing through $Q$ (for a certain $\ell$ large enough, and thus for all larger $\ell$); this is possible by the Lipschitz condition and the differentiability at $P$ and $Q$. We will denote this surface by $S$ and still call it a sphere by abuse of terminology. Without loss of generality we can give an orientation to $\mathcal{L} \cap B^{4m}$ and $S$ and assume that the intersection is positive at $P$ and negative at $Q$. Moreover we can consider a tubular neighbourhood of $S$ and we foliate it by similar spheres, each of which contains two disks of the disk-foliation. For $\ell$ large enough all the $B_{\lambda_\ell}(P)$ and $B_{\lambda_\ell}(Q)$ are contained in the tubular neighbourhood of the first $S$. Each $2$-disk identifies uniquely a sphere (recall that the $2$-disks correspond to choices of $X^1$ in Section \ref{quantization}).

From this $(4m-2)$-parameter foliation with sphere-leaves we want to select only ``good spheres'', meaning the following. Recall the choice of $X_1^\ell$ in Section \ref{quantization} and choose the corresponding sequence $S_\ell$ of spheres from the foliation. Keep in mind that in the choice of $X^1_\ell$ we had a huge freedom (we only needed to avoid a $\Hc^{4m-2}$-small set of values) so the same freedom applies here: we only need to avoid, for each $\ell$, a small set of elements in the foliation, where smallness is understood in the $\Hc^{4m-2}$-measure with respect to $\lambda_\ell^{4m-2}$ (these are bad spheres). Observe moreover that (up to neglecting a $\Hc^{4m-2}$-negligeable set of elements in the foliation, other bad spheres) we can ensure that for every $\ell$ the restriction $u_\ell|_{S_\ell}$ is smooth (by the regularity of the maps $u_\ell$, see Proposition \ref{Prop:singset}, and the fact that there are countably many $u_\ell$). 

Clearly the spheres $S_\ell$ tend to the initial $S$ (the one though $P$ and $Q$). By the parametrization of the tubular neighbourhood of $S$ with domain $S \times (-\varepsilon, \varepsilon)$ we can also identify each $S_\ell$ with $S$, which will be implicitly understood in (\ref{eq:strongconvbase}). 

\medskip

By the blow-up analysis from Section \ref{blowupanalysis} we have that $u_\ell \to u$ in $C^k$ away from any neighbourhood of $\Sigma$. For every fixed $\overline{\ell}$ large enough, we then have two small $2$-dimensional disks $D(P_{\overline{\ell}})$ and $D(Q_{\overline{\ell}})$ in $S_{\overline{\ell}}$ (the disks from the disk-foliation, we can assume that they are small compared to the size of $S_{\overline{\ell}}$) such that 

$$u_\ell|_{S_{\overline{\ell}} \setminus (D(P_{\overline{\ell}}) \cup D(Q_{\overline{\ell}}))} \to u|_{S_{\overline{\ell}} \setminus (D(P_{\overline{\ell}}) \cup D(Q_{\overline{\ell}}))} \text{ in }C^k.$$

Remark further that, as $\overline{\ell} \to \infty$, the disks $D(P_{\overline{\ell}})$ and $D(Q_{\overline{\ell}})$ are slighly translating and shrinking, converging (in any reasonable sense) respectively to $P$ and $Q$. Moreover $u$ is smooth, therefore we have that 

\begin{equation}
 \label{eq:strongconvbase}
u_\ell|_{S_{\ell} \setminus (D(P_{\ell}) \cup D(Q_{\ell}))} \to u|_{S \setminus \{P,Q\}} \text{ in } C^{k}_{loc} \text{(we will need uniform convergence only)}.
\end{equation}
$$u_\ell|_{D(P_{\ell})} \to u(P),\,\,\,\,\,\, u_\ell|_{D(Q_{\ell})} \to u(Q).$$

\medskip

We have implicitly identified $S_\ell$ with $S$ in this convergence result. Moreover we know from Section \ref{quantization} that for every $\ell$ large enough we can identify inside $D(P_\ell)$, with a finite iterative procedure, a finite number of \textit{bubble domains} (identified by means of a maximal function) in which the energy concentrates: here we can renormalise, i.e. reparametrize $u_\ell$ by suitable dilations and stereographic projections. The basic step consists of dilating $D(P_\ell)$ (more and more as $\ell \to \infty$) and wrapping it on an $S^2$, and this reparametrization leads to the first bubble map $\phi_1$. For each $\ell$ the domain of the bubble that we are forming is $S^2 \setminus \{\text{small disk}\}$ (and this small disks shrinks to a point as $\ell \to \infty$). For each $\ell$ we replace the small disk $D(P_\ell)$ with this excised $S^2$ (domain of $\phi_1$), see the top-left picture in Figure \ref{fig:bubbletree}. On the spherical excised bubble domain the (reparametrized) map converges in $C^{1, \alpha}$ away from a possibly problematic neck, on which we continue the contruction after transforming it conformally into a long cylinder (using (\ref{eq:elongatedneck}), see figure \ref{fig:bubbletree} top-right). Sometimes we will need to perform this basic step on a neck domain (which could have a bubble domain in it)\footnote{Comparing to Section \ref{quantization} this corresponds to the case when we have weak convergence of $V_\ell$ to $V_\infty$.}. Again this leads to replacing a small disk in the neck with an excised $S^2$ attached by another neck (Figure \ref{fig:bubbletree}-bottom right). The other possibility that could happen is that on a neck we have the formation of a bubble without the need of a dilation, in which case we replace a cylinder of fixed size by a $S^2$ with two disks removed\footnote{In Section \ref{quantization} this corresponds to the case when we have strong convergence of $V_\ell$ to $V_\infty$.}, compare Figure \ref{fig:bubbletree}-bottom left picture.
After a finite number of steps of this type we exhaust the bubbles and reparametrize on every bubble domain. This leads to the replacement of $S_\ell$ by a new domain, that is topologically still a sphere, but looks like a finite number of spherical excised bubble domains $S_{1,\ell}^2$, ..., $S_{S_P,\ell}^2$ connected by cyclindrical necks with $S_\ell \setminus D(P_\ell)$.

\begin{figure}[!hb]
\centering
 \includegraphics[width=9cm]{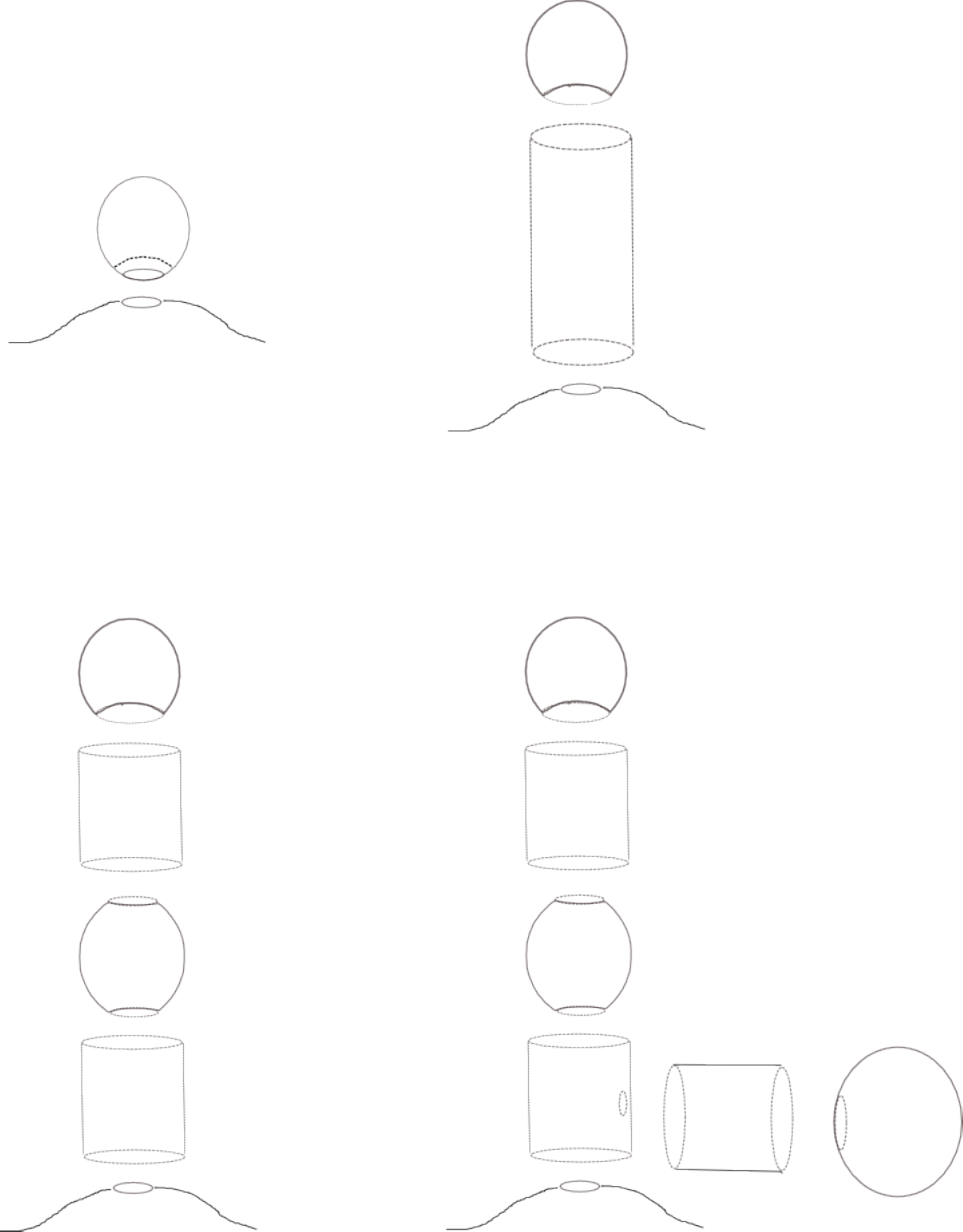}
\caption{This example illustrates the passage from $S_\ell$ to $\tilde{S}_\ell$ at a point of $\Sigma$ (say $P$) in the case that there are three bubbles arising. In the top-left picture we are producing the first bubble $\phi_1$. On the part of the sphere above the dotted line we have $C^{1,\alpha}$-convergence to the bubble map $\phi_1$, the neck (annulus) below the dotted line gets conformally transformed ino a long cylinder and we continue on it the bubbling analysis (top-right picture). We are assuming that on this cylinder there is (in the centre) convergence to a bubble $\phi_2$ without need of reparametrization (this corresponds to $V_\ell \to V_\infty$ strongly in $W^{1,2}$ with the notations of Section \ref{quantization}). $V_\infty$ becomes the second bubble $\phi_2$ and we visualize this by replacing the middle part of the cyclinder with a spherical bubble domain (bottom-left picture). Here we see two necks and since there is only another bubble to be tracked down, one of the two necks allows no further bubble domains (say the top neck). In the bottom neck we assume that there is a bubble formation but it needs a reparametrization (this corresponds to $V_\ell \to V_\infty$ weakly in $W^{1,2}$ with the notations of Section \ref{quantization}). Thus we again replace a small cap with a bubble domain attached by a neck (bottom-right picture) and produce the third bubble $\phi_3$, by repeating the procedure that we used for $\phi_1$.}
 \label{fig:bubbletree}
\end{figure}

For $D(Q_\ell)$ we do the same thing and replace it with a connected sum of $S_Q$ spheres, attached by the \textit{neck domains}. So $S_\ell$ has been replaced by a new domain $\tilde{S}_\ell$ which is topologically still a sphere but, thanks to the renormalization process, the new map $\tilde{u}_\ell$ defined on it by reparametrization can be passed to the limit in a much stronger sense. Observe also that $\tilde{u}_\ell$ is still smooth, since so was $u_\ell$. Remark that we have strong convergence on each spherical bubble domain to a smooth harmonic map, strong convergence to $u$ on $S_\ell \setminus D(P_\ell)$ as in (\ref{eq:strongconvbase}) and the contribution to the Dirichlet energy in the necks is as small as we like for $\ell$ large enough (the latter is the content of Theorem \ref{thm:quantization}). 

\medskip

To be precise for the forthcoming argument, we need to fill in the missing disks of the excised spheres and cylinders, i.e. we want to think of $\tilde{S}_\ell$ as a connected sum of spheres (the cylinders also become spheres once we fill in the missing disks):
$$\tilde{S}_\ell =  S_\ell \# \sum_j N_{\ell, j} \# \sum_k S^2_{\ell, k} $$
with $j$ and $k$ ranging over a finite set. Here we denote by $N$ the spheres that came from the neck domains and by $S^2$ those that came from the bubble domains. Roughly speaking we are thinking of $\tilde{S_\ell}$ as a sum of closed spheres and closed cyclinders, attached by small disks: when we attach them by overlapping the small disks, orientation cancellation gives back $\tilde{S}_\ell$.

It is not essential to know exactly how we extend the map $\tilde{u}$ on these extra disks. The disks can be taken as small as needed, up to chosing $\ell$ large enough; by the strong convergence on each bubble domain to a smooth harmonic map (a bubble map) and by the strong convergence on $S_\ell \setminus (D^P_{\ell,1} \cup D^Q_{\ell,1})$ to the smooth harmonic map $u$ we can define $\tilde{u}$ on these small disks by using as little Dirichlet energy as we like\footnote{We are counting the extra energy brought in by closing the spheres and the cylinders, here each disks contributes to the energy twice as there is no cancellation.}, say $\tilde{\eps}$ in total. The area covered in $\Nc$ by the image of these small disks is also at most $\tilde{\eps}$, since the Dirichlet energy controls the area.

Recall that by Hurewicz theorem $\pi_2(\Nc)$ is the same as $H_2(\Nc, \mathbb{Z})$. Consider now the map $\tilde{u}_\ell: \tilde{S}_\ell \to \Nc$ and remark that $(\tilde{u}_\ell)_*(\llbracket\tilde{S}_\ell\rrbracket)$ is the same integral $2$-dimensional cycle as $(u_\ell)_*(\llbracket S_\ell \rrbracket)$. We want to look at the homology class of $(u_\ell)_*(\llbracket S_\ell \rrbracket)$. This class can be obtained by adding the homology classes of the spherical sub-domains $S^2_{\ell, k}$ (the bubble domains) and those of the cyclindrical necks $N_{\ell, j}$ (these are now all closed surfaces, i.e. topologically they are $S^2$'s but we will keep the names to distinguish the bubble domains and the neck domains).

\textbf{Claim}: the homology class of $(\tilde{u}_\ell)_*(\llbracket\tilde{S}_\ell\rrbracket)$ is the sum of the class of $u: S \to \Nc$ plus the classes of  $v_p$ (the bubbles originating at $P$, here $p=1,..., S_P$) minus the classes of $w_q$ (the bubbles originating at $Q$, here $q=1,..., S_Q$-recall that at $Q$ we have that $D(Q)$ is oriented negatively):
$$(\tilde{u}_\ell)_*\llbracket\tilde{S}_\ell\rrbracket\stackrel{\text{in }H_2(\Nc,\Z)}{\equiv}(u)_*\llbracket{S}\rrbracket + \sum_{p=1}^{S_P} (v_p)_*\llbracket S^2 \rrbracket -  \sum_{q=1}^{S_Q} (w_q)_*\llbracket S^2 \rrbracket. $$

This claim follows by the following facts:
\begin{description}
 \item[(i)]  we have $C^{1,\alpha}$-convergence on the bubble domains to the bubble maps $v_p$ and $w_q$ (thanks to the reparametrization), therefore we can pass to the limit in the homotopy classes; 
 
 \item[(ii)] we have the uniform convergence of $u_\ell|_{S_\ell}$ to $u|_S$ as in (\ref{eq:strongconvbase}), therefore we can pass to the limit in the homotopy classes;  
 
 \item[(iii)] the energy in the cylindrical neck domains goes to $0$ by Theorem \ref{thm:quantization} and a fortiori for each neck domain the area of its image goes to $0$ as $\ell \to \infty$, therefore the (finitely many) necks do not contribute in $H_2(\Nc, \Z)$. Recall that for each non-zero homology class there exists a mass-minimizer with non-zero mass and there exists $\xi>0$ such that any integral $2$-cycle with mass below $\xi$ must be in the trivial class, see \cite{FF}. Here we are also ensuring that the $\tilde{\eps}$ of extra energy that we introduced by closing up the cylinders is small compared to $\xi$, as we said this is no problem as long as $\ell$ is large enough.
\end{description}

With the claim in mind, observe further that the map $u$ is smooth on $B^{4m}$, which means in particular that $u: S \to \Nc$ is contractible, so its class is $[0] \in \pi_2(\Nc)$. Therefore
\begin{equation}
 \label{eq:homologyclassprimo}
(\tilde{u}_\ell)_*\llbracket\tilde{S}_\ell\rrbracket\stackrel{\text{in }H_2(\Nc,\Z)}{\equiv}\sum_{p=1}^{S_P} (v_p)_*\llbracket S^2 \rrbracket -  \sum_{q=1}^{S_Q} (w_q)_*\llbracket S^2 \rrbracket. 
\end{equation}
Moreover the assumption that $u_\ell$ satisfy (\ref{eq:strongapproximability}) yields as well that $(u_\ell)_*(\llbracket S_\ell \rrbracket)$ is in the trivial class of $H_2(\Nc,\Z)$, as follows. Denote the bubbles at $P$ with $v_p:S^2 \to \Nc$ and the bubbles at $Q$ with $w_q:S^2 \to \Nc$, for $p$ ranging in $\{1, ... S_P\}$ and $q$ ranging in $\{1, ... S_Q\}$ (probably $S_P$ must be the same as $S_Q$ but this is not important for this proof). 
The bubbles at $P$ are all holomorphic for the complex structure $a_P I + b_P J + c_P K$ whilst the bubbles at $Q$ are all holomorphic for the complex structure $a_Q I + b_Q J + c_Q K$. For any closed $2$-form $\alpha=a \Om_I + b \Om_J +c \Om_K$ on $\Nc$ with $a^2+b^2+c^2=1$ we consider $(u_\ell)_*(\llbracket S_\ell \rrbracket)(\alpha) = \llbracket S_\ell \rrbracket(u_\ell^* \alpha)$ and keep in mind that $\llbracket S_\ell \rrbracket = \p L$ for some $3$-current $L$ (since we are in a ball $B^{4m}$), therefore $(u_\ell)_*(\llbracket S_\ell \rrbracket)(\alpha) = L(d(u_\ell^* \alpha)) =0$ by (\ref{eq:strongapproximability}). This is true for all the above choices of $\alpha$. Unless the bubbles at $P$ and the bubbles at $Q$ are holomorphic for the same complex structure (which is the desired conclusion of the theorem and in which case we can show even more easily that $(u_\ell)_*(\llbracket S_\ell \rrbracket)$ is in the trivial class) we can choose $\alpha$ above so that its integration on $ \sum_{q=1}^{S_Q} (w_q)_*\llbracket S^2 \rrbracket$ gives $0$ and its integration on $\sum_{p=1}^{S_P} (v_p)_*\llbracket S^2 \rrbracket$ is non-zero \footnote{There is an $S^2$ of complex structures on $\Nc$ and so there is an $S^1$ of complex structures orthogonal to $a_Q I + b_Q J + c_Q K$ and exactly one complex structure that is orthogonal to both $a_P I + b_P J + c_P K$ and $a_Q I + b_Q J + c_Q K$.}. Since $\alpha$ is closed, its integration gives the same result within a fixed homology class, so we have contradicted (\ref{eq:homologyclassprimo}).\footnote{If we assume that on $B^{4m}$ we have smooth maps $f_k^\ell$ converging strongly in $W^{1,2}$ to $u_\ell$ as $k \to \infty$, then up to neglecting a $\Hc^{4m-2}$-negligeable set of further bad spheres in the foliation, this strong convergence is valid also for the restrictions to good spheres $S_\ell$ (by the coarea formula).  Therefore the currents $(f_k^\ell)_*\llbracket S \rrbracket$ converge (weakly as integral currents) to $(u_\ell)_*\llbracket S \rrbracket$. By the smoothness of $f_k^\ell$ the maps $f_k^\ell: S \to \Nc$ are contractible and so the currents $(f_k^\ell)_*\llbracket S \rrbracket$ are in the trivial homology class of $H_2(\Nc, \Z)$ and this passes to the limit by the compactness theorem of \cite{FF}.} Therefore the claim is strenghtned to
\begin{equation}
 \label{eq:samehomologyclass}
\sum_{p=1}^{S_P} (v_p)_*\llbracket S^2 \rrbracket -  \sum_{q=1}^{S_Q} (w_q)_*\llbracket S^2 \rrbracket \stackrel{\text{in }H_2(\Nc,\Z)}{\equiv}0. 
\end{equation}
We will conlcude by showing that (\ref{eq:samehomologyclass}) can only happen if $a_P I + b_P J + c_P K=a_Q I + b_Q J + c_Q K$, thereby proving the result.\footnote{Remark, as a comparison with the general case of bubbling for stationary harmonic maps, that the class of the sum of the bubbles at $P$ is non-trivial, as we noticed in remark \ref{oss:holo&antiholo}: there can be no cancellation in homology between the bubbles at $P$ (the same happens for $Q$).}Consider the integral $2$-current 
$$C_P:=\sum_{p=1}^{S_P} (v_p)_*(\llbracket S^2\rrbracket).$$
As a sum of holomorphic spheres it is calibrated by the $2$-form $a_P \Om_I + b_P \Om_J + c_P \Om_K$ (this is a calibration of degree $2$ in $\Nc$), i.e. the tangent $2$-planes are of the form $\vec{v} \wedge (a_P I + b_P J + c_P K)\vec{v}$. Then the standard calibration argument shows that $C_P$ miminizes the mass $M$ (i.e. the area counting multiplicities) in its homology class: indeed, let $T$ be another $2$-current in the same homology class, we then have $T - C_P = \p L$ for some $3$-current $L$ in $\Nc$ and we can write, using the closedness of $a_P \Om_I + b_P \Om_J + c_P \Om_K$ in the second equality and Wirtinger's theorem in the last inequality (equivalently, the fact that the comass is $1$):

$$M(C_P)=C_P(a_P \Om_I + b_P \Om_J + c_P \Om_K) = T(a_P \Om_I + b_P \Om_J + c_P \Om_K) \leq M(T). $$
The amount of information of this simple computation is even greater: equality in the last step holds if and only if $T$ as well is calibrated by $a_P \Om_I + b_P \Om_J + c_P \Om_K$. We conclude terefore that any minimizer in the homology class of $C_P$ must be calibrated by $a_P \Om_I + b_P \Om_J + c_P \Om_K$.

Consider now the integral $2$-current 
$$C_Q:=\sum_{q=1}^{S_Q} (w_q)_*(\llbracket S^2\rrbracket).$$
This is calibrated by $a_Q \Om_I + b_Q \Om_J + c_Q \Om_K$ and therefore (as before) we have that it is a mass-minimizer in its homology class. We concluded in (\ref{eq:samehomologyclass}) that $C_P$ and $C_Q$ are in the same homology class. Thus $C_Q$ must be calibrated by $a_P \Om_I + b_P \Om_J + c_P \Om_K$, i.e. $a_P=a_Q$, $b_P=b_Q$, $c_P=c_Q$.

Observe also that $\Theta(P)$ is given by the sum of the energies of the bubbles at $P$ by Theorem \ref{thm:quantization}, and for each bubble the Dirichlet energy agrees with the mass of the induced current (by conformality). We have seen that $M(C_P)=M(C_Q)$ and therefore $\Theta(P)=\Theta(Q)$. 

As we remarked in the beginning, $P$ and $Q$ can be chosen in an arbitrary manner on $\Sigma \cap B^{4m}$ among points where the density of $\Sigma$ is $1$ and the approximate tangent to $\Sigma$ exists and agrees with the tangent at $\mathcal{L}$. In particular we have proved that almost all points in $\Sigma \cap B^{4m}$ have an approximate tangent for a unique almost complex structure, but we still do not know if $\mathcal{L} \setminus \Sigma$ is empty or not. To show that it is empty, and thus conlcude the proof of Theorem \ref{thm:localholomorphicitysmooth}, we take two points $P$ and $Q'$ on $\mathcal{L}$ with $P \in \Sigma$ and $Q'$ in the open set $\mathcal{L} \setminus \Sigma$, assuming again that $P$ is a point where $T_P\Sigma$ exists. Then we repeat the argument given in this section. This time the quantization result Theorem \ref{thm:quantization} is used only at $P$, while at $Q$ we have no bubbling. Therefore, instead of (\ref{eq:samehomologyclass}) we will conlcude that the sum of the bubbles at $P$ is trivial in $H_2(\Nc, \Z)$, contradicting that $P$ was a point of bubbling. We have thus concluded that $\Sigma \cap B^{4m} = \mathcal{L}$ and $\mathcal{L}$ is a pseudo holomorphic Lipschitzian graph.

\medskip

As remarked in the statement of Theorem \ref{thm:localholomorphicitysmooth}, a posteriori we have that $\mathcal{L}$ is actually smooth. Indeed first of all we observe that $\frac{1}{\Theta}\llbracket \mathcal{L} \rrbracket$ is an integral $(4m-2)$-cycle that is semi-calibrated by the form $\frac{1}{(2m-1)!}(a \om_i + b \om_j +c \om_k)^{(2m-1)}$, and thus is has bounded first variation (we refer to \cite{B2} for general semi-calibrations and in particular for those semi-calibrations obtained by powers of a two-form). Its multiplicity is $1$ almost everywhere and so we can improve its regularity to $C^{1,\alpha}$ by Allard's theorem \cite{Allard}. Then we can improve it to $C^{\infty}$.

More directly (even without recalling Allard's result), we can write $\mathcal{L}$ as the graph of $f:B^{4m-2}\subset \R^{4m-2} \to \R^2$ in suitable coordinates. Namely, we can produce a map $\psi: D^{4m-2} \times D^2 \to \Mc$ so that for every $p \in D^{4m-2}$ the $2$-disk $\psi(p, \cdot)$ is an embedded pseudo holomorphic disk with respect to the almost complex structure $ai + b j + ck$ and so that this family of disks foliates $B^{4m}$. Such foliation is produced in \cite{B}. We can then express $\mathcal{L}$ as the graph of a $W^{1,\infty}$ function $f:\mathbb{D}^{2m-1}\subset \C^{2m-1} \to \C$  in the coordinates induced by $\psi$, with $f(0)=0$ and $\nabla f(0)=0$. Imposing the condition of $J$-invariance on its (a.e. well-defined) tangent planes, we obtain a perturbation of the Cauchy-Riemann equations of the type

$${\overline{\p}}_j f = A^s_j(\vec{z}, f(\vec{z})) \nabla_j f + B_j(\vec{z}, f(\vec{z})),$$
where $\vec{z}=(z_1, ..., z_{2m-1})$ and $A^s_j$ and $B_j$ are smooth functions that are $0$ at $(0,0)$ and are small in $C^k$-norm. These two perturbations functions do not have any dependence on $\nabla f$ in the arguments, therefore this PDE can be treated as a perturbation of Cauchy-Riemann by means of standard elliptic theory, and allows bootstrapping to $C^{\infty}$. This strategy is an easy case of the one employed in \cite{RT1}, where a given pseudo holomorphic integral $2$-cycle in an almost complex manifold of dimension $4$ is studied and its representation in such coordinates is used to deal with the harder case of high multiplicities. 

\medskip

For the sake of completeness, remark that the smoothness also implies that the bubbling conclusions are actually true everywhere on $\Sigma \cap B^{4m}$, since all points admit a tangent plane $T_x \Sigma$.

\section{An application to Fueter sections}
\label{Fueter}

When the domain manifold is $4$-dimensional ($m=1$) the triholomorphic map equation (\ref{eq:triholo}) corresponds to the classical quaternionic $\p$-bar equation studied for the first time in \cite{Fueter} and thus also called the \textit{Fueter equation}. In this section we describe a rather direct application of our work to the ``bubbling analysis for Fueter sections'', treated by T. Walpuski \cite{Walpuski}. The setting in \cite{Walpuski} requires a slightly more general notion than the one we have taken in the present paper, namely one wants to deal with a domain manifold $\Mc^4$ that carries an almost hyper-Hermitian structure only locally (this can be always achieved on a $4$-dimensional manifold) but not necessarily globally, since a topological obstruction might be present, preventing the global extension of $(i,j,k)$. In order to overcome this, one considers a compact bundle $\mathcal{\chi}$ of hyperK\"ahler manifolds on $\Mc$ with a fixed identification of the unit sphere bundle of self-dual forms on $\Mc$ with the bundle of hyperK\"ahler spheres of the fibres of $\chi$. Moreover one needs to fix a connection $1$-form $A$ on $\chi$. To clarify ideas, observe that in the case of triholomorphic maps studied in the present work, we would have the trivial bundle $\Mc \times \Nc$ endowed with the flat connection and the trivial identification $\om_i \to I$, $\om_j \to J$, $\om_k \to K$. In the bundle one then looks at a (smooth) Fueter section, i.e. $u \in \Gamma(\chi)$ that satisfies \cite[(1.2) or (B.3)]{Walpuski}. We remark, in order to avoid confusion, that the situation studied in the main body of \cite{Walpuski} would correspond in our situation to a homogeneous triholomorphic map (a so-called \textit{tangent map}), which is why the domain is taken $3$-dimensional; in Appendix B \cite{Walpuski} deals with the $4$-dimensional case.
As explained in \cite{Walpuski}, the study of Fueter sections is motivated by Gauge Theory on $G_2$ and $Spin(7)$–manifolds, in relation with codimension four bubbling phenomena. The bundle in question is, in that case, a bundle of moduli spaces of anti self-dual instantons on $\Mc$, which can be given a hyperK\"ahler structure.

The presence of the connection changes our equation (\ref{eq:triholo}) by adding on the right hand side lower order terms (depending on $u$ but not on its derivatives). Indeed, compare equation (1.2) or (B.3) in \cite{Walpuski}, the term $Idu i$ in (\ref{eq:triholo}) must be replaced with a term of the form $I (d+A) i$, where we are writing $(d+A)$ for the covariant derivative induced by the chosen connection. In local coordinates, with summation over repeated indexes, this term reads
$$I \nabla_{i\frac{\p}{\p x^\ell}}u= I du\left(i\frac{\p}{\p x^\ell}\right) +I(u) A (u)\lrcorner\left(i\frac{\p}{\p x^\ell}\right)=I_\alpha^\beta(u) i_\ell^s \frac{\p u^\alpha}{\p x^s}\frac{\p}{\p y^\beta}+I_\alpha^\beta(u) A_s^\alpha(u) i_\ell^s \frac{\p}{\p y^\beta}.$$
Remark that in the last two terms, the first one is just what appeared in (\ref{eq:triholoincoords}), so the perturbation term in (\ref{eq:triholo}) is the second term (similar terms appear for $(j,J)$ and $(k,K)$). We will describe briefly how such terms do not affect the analysis that we have performed in order to reach our main results.

In Section \ref{jacobianstructure} we used the triholomorphic equation (\ref{eq:triholo}) together with the geometric structures on the manifolds $\Mc$ and $\Nc$ to infer from (\ref{eq:triholoincoords}), in local coordinates on $\Mc$, the second order PDE  (\ref{eq:withskewsymmetry}), which shows the jacobian structure for $\Delta_g u$. This was the key initial step for the $\eps$-regularity result and for the quantization result. In the situation of Fueter sections from \cite{Walpuski} the perturbation terms do not affect our analysis: indeed, using our notations, these terms reflect in an extra $L^2$-term on the right hand side of (\ref{eq:withskewsymmetry}) when we compute $\Delta_g u^\beta$, namely the perturbation is

$$\frac{1}{\sqrt{g}} \frac{\p}{\p x^a} \left( \sqrt{g} g^{a \ell}  I_\alpha^\beta(u) i_\ell^s A_s^\alpha(u) + \sqrt{g} g^{a \ell}  J_\alpha^\beta(u) j_\ell^s A_s^\alpha(u)+ \sqrt{g} g^{a \ell}  K_\alpha^\beta(u) k_\ell^s A_s^\alpha(u)\right)$$
and the $L^2$-norm of these extra terms is controlled up to a universal constant by the Dirichlet energy of $u$ (only first derivatives of $u$ appear and we never take products of derivatives of $u$). In other words, the PDE structure $\Delta_g u \in \mathscr{h}^1$ that we produce in Proposition \ref{Prop:JacobianStructure}, on which our later analysis is ultimately based, is preserved with the same estimate $\|\Delta_g u\|_{\mathscr{h}^1(\Uc)} \leq C \int_{\Uc}|\nabla u|^2$ (up to changing the constant appearing on the r.h.s.). Our analysis can then be carried out in the same fashion in the application under consideration. The $\eps$-regularity result for Fueter sections was established independently in \cite{Walpuski} and we could also adapt the proof from Section \ref{epsreg}. In adddition we can give an affirmative answer to the quantization question, which is conjectured to be true but left open in \cite[page 5]{Walpuski} and show the analogue of our result Theorem \ref{thm:localholomorphicitysmooth}.

\begin{thm}
Let $u_\ell \in \Gamma(\chi)$ be a sequence of Fueter sections, i.e. they satisfy (1.2) in \cite{Walpuski} and let us assume that the Dirichlet energies $E(u_\ell)$ are uniformly bounded. Up to a subsequence the $u_\ell$ converge weakly in $W^{1,2}$ to a section $u$ (possibly singular) with associated rectifiable defect measure $\Theta \mathcal{H}^1  \res S$ (we are using notations as in \cite[Thm. 1.9]{Walpuski}). Then there is an a priori $W^{2,1}$-estimate
$$\|u_\ell\|_{W^{2,1}}\leq C E(u_\ell), $$
for a universal constant $C>0$. Moreover for $\mathcal{H}^1$-a.e. $x \in S$ there exist non-constant holomorphic spheres (the bubbles at $x$) $\zeta_s: S^2 \to \chi_x$, where $\chi_x$ is the fiber on $x \in \Mc$ and where $S_x \in \N$ and $s=1, ..., S_x$, so that the following energy identity holds
$$\Theta(x)= \sum_{s=1}^{S_x} E(\zeta_s).$$
The bubbles at $x$ are a priori all holomorphic for a complex structure that depends on $x$. Whenever $\Sigma$ is (locally in an open ball $B^3$) contained in a connected Lipschitz graph $\mathcal{L}$ (without boundary) and if the limit section $u$ is smooth on $B^3$ then for all $x \in \Sigma \cap B^3$ the bubbles are holomorphic for a unique complex structure (independent of $x$) and $\Sigma \cap B$ is a smooth connected curve in $B^3$.
\end{thm}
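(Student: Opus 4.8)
The plan is to reduce the Fueter-section statement to the triholomorphic-map results proved earlier, by treating the connection term as an admissible lower-order perturbation. First I would record the local structure: away from a point $x_0\in\Mc^4$ one trivializes the bundle $\chi$ (using the identification of the self-dual two-form bundle with the hyperK\"ahler spheres of the fibres) so that $u$ becomes a $W^{1,2}$ map into a hyperK\"ahler manifold $\Nc$, and equation $(1.2)$ of \cite{Walpuski} takes exactly the form $(\ref{eq:triholo})$ with an extra term that is \emph{zeroth order} in $\nabla u$, namely $I(u)A(u)\lrcorner(i\,\cdot\,)+J(u)A(u)\lrcorner(j\,\cdot\,)+K(u)A(u)\lrcorner(k\,\cdot\,)$. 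As the excerpt already points out, feeding this into the Laplace--Beltrami computation of Section \ref{jacobianstructure} adds to $\Delta_g u^\beta$ only a term of the form $\tfrac{1}{\sqrt{|g|}}\p_{x^a}\!\big(\sqrt{|g|}\,g^{a\ell}(I_\alpha^\beta i_\ell^s + J_\alpha^\beta j_\ell^s+K_\alpha^\beta k_\ell^s)A_s^\alpha\big)$ evaluated at $u$, whose $L^2$-norm is controlled by $\|\nabla u\|_{L^2}$ (no products of derivatives occur). Hence Proposition \ref{Prop:JacobianStructure} persists verbatim: $\Delta_g u\in\mathscr{h}^1$ with $\|\Delta_g u\|_{\mathscr{h}^1(\Uc)}\le C\int_\Uc|\nabla u|^2$, and from it the $W^{2,1}$-bound of Theorem \ref{thm:W21estimate} follows with the same proof, giving $\|u_\ell\|_{W^{2,1}}\le C\,E(u_\ell)$.

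Next I would establish the compactness picture. An almost-monotonicity formula holds as in Proposition \ref{Prop:monotonicity}: redoing the Lie-derivative/integration-by-parts argument, the null-Lagrangian terms acquire contributions from $d\om_i,d\om_j,d\om_k$ (as in Case 2) \emph{and} from the connection, but both are controlled by $\int|X|\,|\nabla u|^2$, so one again gets $\tfrac{1}{r^{2}}\int_{B_r(x)}|\nabla u|^2$ monotone up to a factor $1+O(r)$ (here $4m-2=2$). With monotonicity and the Hardy-space PDE in hand, the $\eps$-regularity Lemma \ref{lem:epsregularitydecay} and Proposition \ref{Prop:epsregularity} go through exactly as in Section \ref{epsreg}; the zeroth-order perturbation only changes the schematic PDE for the rescaled maps $v_q$ by a term $O(\lambda_q)\,I(\lambda_q v_q)$ that vanishes in the blow-up, so the limit is still harmonic. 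This yields a closed blow-up set $S=\Sigma$ with locally finite $\Hc^1$-measure, $(1)$-rectifiable (actually here one expects a $1$-dimensional curve), strong convergence off $S$, and a defect measure $\Theta\,\Hc^1\!\res S$ with $\Theta\ge\eps_0$, matching \cite[Thm.\ 1.9]{Walpuski}.

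For the quantization identity $\Theta(x)=\sum_{s}E(\zeta_s)$ I would run the argument of Section \ref{quantization} verbatim: dimension reduction at a good point $x\in S$, the maximal-function slicing to pick good $2$-planes transverse to $T_xS$, extraction of finitely many bubbles by the dilation/reparametrization-on-$S^2$ scheme of \cite{Lin,LR}, the $W^{2,1}$-estimate $\Rightarrow$ uniform $L^{2,1}$-bound on slices via $W^{1,1}(\R^2)\hookrightarrow L^{2,1}(\R^2)$, and the $L^{2,\infty}$-estimate on the elongated necks coming from the interior gradient estimate for the (almost-stationary, triholomorphic up to the harmless zeroth-order term) cylinder maps $W_\ell$; interpolation $L^{2,1}$--$L^{2,\infty}$ then kills the neck energy. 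Proposition \ref{Prop:complexstructureofbubble} is purely algebraic and insensitive to the perturbation: annihilating the two $X_1$-directions in $(\ref{eq:triholo})$ (the connection term disappears in the blow-up) forces each bubble to be $((a_xi+b_xj+c_xk),-(a_xI+b_xJ+c_xK))$-holomorphic, and $T_xS$ to be holomorphic for $a_xi+b_xj+c_xk$; Remark \ref{oss:holo&antiholo} still applies so there is no holo/antiholo mixing.

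Finally, for the last assertion (unique complex structure and smoothness of $\Sigma\cap B^3$ when $\Sigma$ lies in a connected Lipschitz graph $\mathcal{L}$ and $u$ is smooth on $B^3$) I would transcribe the proof of Theorem \ref{thm:localholomorphicitysmooth} from Section \ref{Holomorphicity properties of smooth blow-up sets}: reduce to a Lipschitz graph, pick two good points $P,Q\in\Sigma\cap B^3$, build an embedded $2$-sphere $S$ through $P,Q$ meeting $\Sigma$ only there, foliate a tubular neighbourhood, select good spheres $S_\ell$ (avoiding an $\Hc^1$-small exceptional set of leaves), perform the renormalization $\tilde S_\ell=S_\ell\#\sum N_{\ell,j}\#\sum S^2_{\ell,k}$ closing up the cylinders with $\tilde\eps$ extra energy, and conclude $(\tilde u_\ell)_*\llbracket\tilde S_\ell\rrbracket\equiv u_*\llbracket S\rrbracket+\sum_p (v_p)_*\llbracket S^2\rrbracket-\sum_q(w_q)_*\llbracket S^2\rrbracket$ in $H_2(\chi_x,\Z)$ fibrewise (using $\pi_2=H_2$ by Hurewicz, the smoothness of $u$ making $u|_S$ contractible, and the quantization result making the necks homologically trivial via the isoperimetric gap of \cite{FF}); the strong-approximability assumption on the $u_\ell$ forces $(u_\ell)_*\llbracket S_\ell\rrbracket\equiv0$, hence $C_P:=\sum_p(v_p)_*\llbracket S^2\rrbracket$ and $C_Q:=\sum_q(w_q)_*\llbracket S^2\rrbracket$ are homologous; the calibration/Wirtinger argument then shows a minimizer in that class is calibrated by $a_P\Om_I+b_P\Om_J+c_P\Om_K$ and simultaneously by $a_Q\Om_I+b_Q\Om_J+c_Q\Om_K$, forcing $(a_P,b_P,c_P)=(a_Q,b_Q,c_Q)$ and $\Theta(P)=\Theta(Q)=M(C_P)$; comparing $P\in\Sigma$ with $Q'\in\mathcal{L}\setminus\Sigma$ gives a contradiction unless $\mathcal{L}\setminus\Sigma=\emptyset$, so $\Sigma\cap B^3=\mathcal{L}$; the graph then satisfies a perturbed Cauchy--Riemann system (using the pseudo-holomorphic disk foliation of \cite{B}) whose perturbations vanish at the origin and depend only on $(\vec z,f)$, allowing elliptic bootstrapping to $C^\infty$ as in \cite{RT1}. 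The main obstacle is to verify carefully that \emph{every} place in Sections \ref{quantization} and \ref{Holomorphicity properties of smooth blow-up sets} where the exact triholomorphic equation $(\ref{eq:triholo})$ is invoked (rather than merely $\Delta_g u\in\mathscr{h}^1$ plus almost-stationarity) still functions with the zeroth-order connection term present — in particular that the cylinder maps $W_\ell$ remain almost-stationary for the $\nabla=(e^{-t}D_{X_1},\p_t,\p_\theta)$ gradient and that the algebraic computation behind Proposition \ref{Prop:complexstructureofbubble} is unaffected after the blow-up; and to handle the fact that here the target is a fibre $\chi_x$ of a bundle rather than a fixed $\Nc$, so the homological argument must be read fibrewise, which is legitimate because after the dimension reduction everything happens in an arbitrarily small ball where $\chi$ is trivial.
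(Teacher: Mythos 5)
Your proposal is correct and follows essentially the same route as the paper: Section \ref{Fueter} proves this theorem precisely by observing that the connection contributes only a zeroth-order perturbation to (\ref{eq:triholo}), so that the Jacobian/Hardy-space structure $\Delta_g u\in\mathscr{h}^1$ of Proposition \ref{Prop:JacobianStructure} survives with the same energy bound, and then by transferring the monotonicity, $\eps$-regularity, $W^{2,1}$, quantization and calibration/homology arguments verbatim. Your closing checklist of the places where the exact first-order equation (rather than just $\Delta_g u\in\mathscr{h}^1$ plus almost-stationarity) is invoked matches the points the paper itself flags.
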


\medskip

\medskip

\end{document}